\date{}
\renewcommand{\uppercasenonmath}[1]{}
\numberwithin{equation}{section} \theoremstyle{plain}
\theoremstyle{plain}
\theoremstyle{plain}
\newtheorem{theorem}{Theorem}[section]
\newtheorem{proposition}[theorem]{Proposition}
\newtheorem{lemma}[theorem]{Lemma}
\newtheorem{corollary}[theorem]{Corollary}
\newtheorem{example}[theorem]{Example}
\newtheorem*{open question}{Open Question}
\newtheorem{definition}[theorem]{Definition}
\theoremstyle{definition}
\theoremstyle{remark}
\newtheorem{remark}[theorem]{Remark}
\newcommand{\Tor}{\mbox{\rm Tor}}
\newcommand{\Prufer}{Pr\"{u}fer}
\newcommand{\A}{\mathcal{A}}
\newcommand{\Q}{\mathcal{Q}}
\newcommand{\Pj}{\mathcal{P}}
\newcommand{\Id}{\mathrm{Id}}
\def\pd{{\rm pd}}
\def\id{{\rm id}}
\def\gld{\rm gl.dim}
\def\cwd{{\rm w.gl.dim}}
\def\tor{{\rm tor}}
\def\Hom{{\rm Hom}}
\def\Ext{{\rm Ext}}
\def\Tor{{\rm Tor}}
\def\fPD{{\rm fPD}}
\def\Ker{{\rm Ker}}
\def\Im{{\rm Im}}
\def\Cok{{\rm Cok}}
\def\Nil{{\rm Nil}}
\def\Ann{{\rm Ann}}
\def\Reg{{\rm Reg}}
\def\Z{{\rm Z}}
\def\Max{{\rm Max}}
\def\T{{\rm T}}
\def\E{{\rm E}}
\def\DQ{{\rm DQ}}
\def\Spec{{\rm Spec}}
\newcommand{\m}{\frak{m}}
\newcommand{\p}{\frak{p}}
\newcommand{\q}{\frak{q}}
\def\Min{{\rm Min}}
\def\fPD{{\rm fPD}}
\def\m{{\frak m}}
\def\grade{{\rm grade}}
\begin{document}
\begin{center}
{\large  \bf On $\tau_q$-projectivity and $\tau_q$-simplicity}

\vspace{0.5cm}
Xiaolei Zhang\\
\bigskip
School of Mathematics and Statistics, Shandong University of Technology,\\
Zibo 255049, China\\
E-mail: zxlrghj@163.com\\
\end{center}

\bigskip
\centerline { \bf  Abstract}
\bigskip
\leftskip10truemm \rightskip10truemm \noindent

In this paper, we first introduce and study the notion of $\tau_q$-projective modules via strongly Lucas modules, and then investigate the  $\tau_q$-global dimension $\tau_q$-\gld$(R)$ of a ring $R$. We obtain that if $R$ is a $\tau_q$-Noetherian ring, then $\tau_q$-\gld$(R)=\tau_q$-\gld$(R[x])=$\gld$(\T(R[x]))$. Finally, we study the rings over which all modules are $\tau_q$-projective (i.e., $\tau_q$-semisimple rings). In particular, we show that a ring $R$ is a $\tau_q$-semisimple ring if and only if $\T(R[x])$ (or $\T(R)$, or $\Q_0(R)$) is a semisimple ring, if and only if $R$ is a reduced ring with $\Min(R)$ finite, if and only if  every reg-injective (or  semireg-injective, or Lucas, or strongly Lucas) module is injective.
\\
\vbox to 0.3cm{}\\
{\it Key Words:} $\tau_q$-projective module; strongly Lucas module; $\tau_q$-semisimple ring; total ring of quotients; amalgamation of ring.\\
{\it 2020 Mathematics Subject Classification:} 13C10; 13C12; 16D60.

\leftskip0truemm \rightskip0truemm
\bigskip
\section{Introduction}

Throughout this paper, we always assume all rings are  commutative rings with identity. The small finitistic dimension $\fPD(R)$ of a ring $R$, which is defined to be  the supremum of projective dimensions of modules  with finite resolution by finitely generated projective modules, is one of most important homological dimensions that attract many algebraists.  In \cite{CFFG14}, Cahen et al. posed 44 open problems in commutative ring theory. The first one is regarding the small finitistic dimension of a commutative ring: Let $R$ be a total ring of quotients. Is \fPD$(R) = 0$? Recently, Wang et al. \cite{wzcc20} utilized $\Q$-torsion theories and Lucas modules to give a total ring of quotients $R$ with \fPD$(R)> 0$ giving a negative answer to the Problem. Furthermore, using the classifications of tilting modules,  the author in this paper et al. \cite{z-fpd} obtained total rings of quotients $R$ with \fPD$(R)=n$ for every $n\geq 0$.

For further study of $\Q$-torsion theories and Lucas modules, Zhou et al. \cite{ZDC20} introduced the notion of $q$-operations over commutative rings in terms of finitely generated semi-regular ideals.  $q$-operations are semi-star operations which are weaker than the well-known  $w$-operations (see \cite{fk16}). The authors in \cite{ZDC20} also proposed $\tau_q$-Noetherian rings and study them via module-theoretic point of view, such as $\tau_q$-analogue of the Hilbert basis theorem, Krull's principal ideal theorem, Cartan-Eilenberg-Bass theorem and Krull intersection theorem. Recently, the author in this paper et al. \cite{ZQ23} introduced and studied the notions of $\tau_q$-flat modules, $\tau_q$-VN regular rings and $\tau_q$-coherent rings. In particular, they characterized $\tau_q$-VN regular rings in terms of the total ring of quotient $\T(R[x])$ and the compactness of minimal primes; and then characterized  $\tau_q$-coheret rings in terms of $\tau_q$-flat modules.

The main motivation of this paper is to introduce the notions of $\tau_q$-projective modules, explore their homological dimensions, and characterize the $\tau_q$-semisimple rings.  As our work involves $q$-operations, we give a brief introduction on them. For more details, refer to  \cite{wzcc20,fkxs20,ZQ23,ZDC20}.

Recall that an ideal $I$ of a ring $R$ is said to be \emph{dense} if $(0:_RI):=\{r\in R\mid Ir=0\}=0$;  be \emph{regular} if $I$ contains a non-zero-divisor; and be \emph{semi-regular} if there exists a finitely generated dense sub-ideal of $I$. The set of all finitely generated semi-regular ideals of $R$ is denoted by $\Q$. The \emph{ring of finite fractions} of $R$ is defined to be:
$$\Q_0(R):=\{\alpha\in \T(R[x])\mid\ \mbox{there exists}\ I\in \Q\ \mbox{such that } I\alpha\subseteq R\},$$
Let $M$ be an $R$-module. Denote by
$\tor_{\Q}(M):=\{m\in M|Im=0$, for some $I\in \Q \}.$
An $R$-module $M$ is said to be \emph{$\Q$-torsion} (resp., \emph{$\Q$-torsion-free}) if $\tor_{\Q}(M)=M$ (resp., $\tor_{\Q}(M)=0$). 
 A $\Q$-torsion-free module $M$ is called a \emph{Lucas module} if $\Ext_R^1(R/I,M)=0$ for any $I\in \Q$, and the \emph{Lucas envelope} of $M$ is given by
\begin{center}
{\rm $M_q:=\{x\in \E_R(M)|Ix\subseteq M$, for some $I\in \Q \},$}
\end{center}
where $\E_R(M)$ is the injective envelope of $M$ as an $R$-module.
By  \cite[Theorem 2.11]{wzcc20}, $M_q=\{x\in \T(M[x])|Ix\subseteq M$, for some $I\in \Q \}.$
Obviously,  a $\Q$-torsion-free module $M$ is a Lucas module if and only if $M_q=M$. A \emph{$\DQ$ ring} $R$ is a ring for which every ideal  is a Lucas module.
By \cite[Proposition 2.2]{fkxs20} $\DQ$ rings are exactly rings with small finitistic dimensions equal to $0$.
An $R$-submodule $N$ of a  $\Q$-torsion free module $M$ is called a $q$-submodule if $N_q\cap M=N$. If an ideal $I$ of $R$ is a $q$-submodule of $R$, then $I$ is also called a $q$-ideal of $R$. A \emph{maximal $q$-ideal} is an ideal of $R$ which is maximal among the $q$-submodules of $R$. The set of all maximal $q$-ideals is denoted by $q$-$\Max(R)$. Note that $q$-$\Max(R)$ is exactly the set of all maximal non-semiregular ideals of $R$, and thus is non-empty and a subset of $\Spec(R)$ (see
\cite[Proposition 2.5, Proposition 2.7]{ZDC20}).

Let $M$ and $N$ be $R$-modules. An $R$-homomorphism $f:M\rightarrow N$ is called to be  a \emph{$\tau_q$-monomorphism} (resp., \emph{$\tau_q$-epimorphism}, \emph{$\tau_q$-isomorphism}) provided that $f_\m:M_\m\rightarrow N_\m$ is a monomorphism (resp., an epimorphism, an isomorphism) over $R_\m$ for every $\m\in q$-$\Max(R)$. By \cite[Proposition 2.7(5)]{ZDC20}, an $R$-homomorphism $f:M\rightarrow N$ is a $\tau_q$-monomorphism (resp., $\tau_q$-epimorphism, $\tau_q$-isomorphism) if  and only if $\Ker(f)$ is  (resp.,  $\Cok(f)$ is, both $\Ker(f)$ and $\Cok(f)$ are) $\Q$-torsion. A sequence  $\cdots \rightarrow A_1\rightarrow A_2\rightarrow A_3 \rightarrow \cdots $ of $R$-modules is said to be  \emph{$\tau_q$-exact} provided that $\cdots \rightarrow A_\m\rightarrow B_\m\rightarrow C_\m\rightarrow \cdots$ is  exact as  $R_\m$-modules for every $\m\in q$-$\Max(R)$.

Let $M$ be an $R$-module. Then $M$ is said to be \emph{$\tau_q$-finitely generated} provided that there exists  a $\tau_q$-exact sequence $F\rightarrow M\rightarrow 0$ with $F$ finitely generated free;  $M$ is said to be \emph{$\tau_q$-finitely presented} provided that there exists a $\tau_q$-exact sequence $ F_1\rightarrow F_0\rightarrow M\rightarrow 0$ such that $F_0$ and $F_1$ are finitely generated free modules. A ring $R$ is said to be  \emph{$\tau_q$-Noetherian} if every ideal of $R$ is  $\tau_q$-finitely generated; and is said to be \emph{$\tau_q$-coherent} provided that every $\tau_q$-finitely generated ideal of $R$ is $\tau_q$-finitely presented.

This paper is  arranged as follows. In Section 2, we first introduce the strongly Lucas modules which can be seen as Lucas modules with ``hereditary properties''(see Definition \ref{s-lucas}), connect and then differentiate them with Lucas modules (see Theorem \ref{mccoy} and Example \ref{exam-sl-l}). The $\tau_q$-projective modules are defined to be the $R$-modules which lie in the left orthogonal class of all strongly Lucas modules with respect to \Ext-functor (see Definition \ref{q-proj}).
We also consider when ``$M$ is a $\tau_q$-projective $R$-module'' is equivalent to  ``$M\otimes_R\T(R[x])$ is a projective $\T(R[x])$-module'' (see Proposition \ref{T-L} and Theorem \ref{T-L-tau}).  In Section 3, we introduce and study $\tau_q$-projective dimensions of modules and $\tau_q$-global dimensions of rings. The $\tau_q$-projective dimension $\tau_q$-\pd$_R(M)$ of an $R$-module $M$ is defined to be the  length of the shortest $\tau_q$-projective resolutions of $M$; and the $\tau_q$-global dimension $\tau_q$-\gld$(R)$ of a ring $R$ is the supremum of $\tau_q$-projective dimensions of all $R$-modules (see Definition \ref{w-phi-flat }). The Hilbert syzygy Theorem for $\tau_q$-global dimensions states that if $R$ is a $\tau_q$-Noetherian ring, then $\tau_q$-\gld$(R)=\tau_q$-\gld$(R[x])=$\gld$(\T(R[x]))$  (see Theorem \ref{Hilb}). In Section 4, we mainly characterize $\tau_q$-semisimple rings (i.e. rings $R$ with  $\tau_q$-\gld$(R)=0$). In details, we show that a ring $R$ is $\tau_q$-semisimple if and only if $\T(R[x])$  (or $\T(R)$, or $\Q_0(R)$) is a semisimple ring, if and only if
$R$ is a reduced ring with finite minimal primes, if and only if every  reg-injective (or  semireg-injective, or Lucas, or strongly Lucas) module is injective (see Theorem \ref{0-d}). Besides, the $\tau_q$-simplicity of formal power series rings and amalgamation rings are considered (see Proposition \ref{pow} and Proposition \ref{am}).

For a ring $R$, we always denote by $R[x]$ (resp., $R[\![x]\!]$) the ring of polynomials (resp., formal power series) with coefficients in $R$, $\Z(R)$ the set of all zero-divisors in $R$, $\T(R)$ the total quotient ring of $R$, $\Spec(R)$ the set of all  primes of $R$, $\Min(R)$ the set of all minimal primes of $R$, $\Nil(R)$ the nil radical of $R$, \gld$(R)$ the global dimension of $R$ and $\fPD(R)$ the small finitistic dimension of $R$. Let $I$ be an ideal of $R$. Denote by $V(I)$ the primes that contains $I$. Let $M$ be an $R$-module. Denote by \pd$_R(M)$ and \id$_R(M)$ the projective dimension and injective dimension of $M$, respectively.

\section{strongly Lucas modules and $\tau_q$-projective modules}

Before introducing the notion of $\tau_q$-projective modules, we first introduce the concept of strongly Lucas modules. Recall from \cite{wzcc20} that  a $\Q$-torsion-free $R$-module $M$ is said to be a \emph{Lucas module} if $\Ext_R^1(R/I,M)=0$ for every $I\in \Q$. Now we introduce the notion of  strongly Lucas modules.

\begin{definition}\label{s-lucas}
Let $M$ be a $\Q$-torsion-free $R$-module. Then $M$ is said to be a strongly Lucas module if $\Ext_R^n(R/I,M)=0$ for every $I\in \Q$ and every $n\geq 1$.
\end{definition}

\begin{proposition}\label{c-s-lucas}
Let $M$ be a $\Q$-torsion-free $R$-module. Then
\begin{enumerate}
    \item $M$ is  a strongly Lucas module.
        \item Every $n$-th co-syzygies $\Omega_{n}(M)$ of $M$ is  a  Lucas module for every $n\geq 1$.
     \item  If $\Ext_R^n(T,M)=0$ for every $\Q$-torsion  $R$-module $T$ and $n\geq 1$.
     \end{enumerate}
\end{proposition}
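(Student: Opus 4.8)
The plan is to read the three items as the equivalence (1) $\Leftrightarrow$ (2) $\Leftrightarrow$ (3), and to prove them as a short cycle: (3) $\Rightarrow$ (1) is immediate, (1) $\Rightarrow$ (3) carries the real content, and (1) $\Leftrightarrow$ (2) is handled by dimension shifting. Throughout I would lean on two elementary facts. First, $\Q$-torsion is a hereditary torsion class, so it is closed under submodules, quotients and arbitrary direct sums, and every $\Hom_R(T,M)$ with $T$ a $\Q$-torsion module and $M$ a $\Q$-torsion-free module vanishes (if $Im=0$ with $I\in\Q$, then $If(m)=0$ forces $f(m)\in\tor_{\Q}(M)=0$). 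Second, the dimension-shifting isomorphism $\Ext_R^{j}(N,\Omega_{k}(M))\cong \Ext_R^{j+k}(N,M)$ for $j\ge 1$, read off from the injective resolution $0\to M\to E^0\to E^1\to\cdots$ that defines the cosyzygies $\Omega_{k}(M)$.

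The substantive implication is (1) $\Rightarrow$ (3). Given a $\Q$-torsion module $T$, I would pick generators $\{t_\lambda\}$, each annihilated by some $I_\lambda\in\Q$, and form the presentation $0\to K\to P\to T\to 0$ with $P=\bigoplus_\lambda R/I_\lambda$. Here $P$ is $\Q$-torsion, being a direct sum of $\Q$-torsion cyclics, so its submodule $K$ is $\Q$-torsion as well. Since $\Ext$ sends the coproduct in the first variable to a product, $\Ext_R^{n}(P,M)\cong\prod_\lambda \Ext_R^{n}(R/I_\lambda,M)=0$ for every $n\ge 1$ by hypothesis. The long exact sequence then yields $\Ext_R^{n+1}(T,M)\cong\Ext_R^{n}(K,M)$ for $n\ge 1$, while in the bottom degree the segment $\Hom_R(K,M)\to\Ext_R^{1}(T,M)\to\Ext_R^{1}(P,M)$ becomes $0\to\Ext_R^{1}(T,M)\to 0$, because $\Hom_R(K,M)=0$ (torsion into torsion-free) and $\Ext_R^{1}(P,M)=0$; hence $\Ext_R^{1}(T,M)=0$ for every $\Q$-torsion $T$. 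Induction on $n$, fed by $\Ext_R^{n+1}(T,M)\cong\Ext_R^{n}(K,M)$ with $K$ again $\Q$-torsion, kills all higher $\Ext$. The converse (3) $\Rightarrow$ (1) follows at once by taking $T=R/I$ with $I\in\Q$.

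For (1) $\Leftrightarrow$ (2) I would argue by induction via dimension shifting, the key input being that for a $\Q$-torsion-free module the cosyzygy $\Omega_1(N)=\E_R(N)/N$ satisfies $\tor_{\Q}(\E_R(N)/N)=N_q/N$, so $\Omega_1(N)$ is $\Q$-torsion-free exactly when $N$ is Lucas (and $\E_R(N)$ is itself $\Q$-torsion-free because $N$ is essential in it). For (1) $\Rightarrow$ (2): a strongly Lucas $M$ is in particular Lucas, so $\Omega_1(M)$ is $\Q$-torsion-free, and $\Ext_R^{1}(R/I,\Omega_1(M))\cong\Ext_R^{2}(R/I,M)=0$ makes $\Omega_1(M)$ Lucas; iterating (each Lucas $\Omega_{k}(M)$ forces $\Omega_{k+1}(M)$ to be $\Q$-torsion-free, and dimension shifting supplies the $\Ext^{1}$-vanishing) shows every $\Omega_{n}(M)$ is Lucas. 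Conversely, for (2) $\Rightarrow$ (1) the delicate point is recovering $\Ext_R^{1}(R/I,M)=0$: from $\Omega_1(M)$ being Lucas, hence $\Q$-torsion-free, I obtain $M_q=M$, i.e. $M$ is Lucas, while for $n\ge 1$ the vanishing $\Ext_R^{n+1}(R/I,M)\cong\Ext_R^{1}(R/I,\Omega_{n}(M))=0$ is automatic.

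I expect the main obstacle to be the passage from the cyclic modules $R/I$ to an arbitrary, possibly infinitely generated, $\Q$-torsion module $T$ in (1) $\Rightarrow$ (3); the resolution by $\bigoplus_\lambda R/I_\lambda$ together with the vanishing of $\Hom_R(\text{torsion},\text{torsion-free})$ is precisely what lets the induction close with no Noetherian or finiteness hypothesis. A secondary subtlety worth flagging is that (2) quantifies only over $n\ge 1$, so the Lucas property of $M$ itself is not assumed but must be extracted from the $\Q$-torsion-freeness of $\Omega_1(M)$ through the identity $\tor_{\Q}(\E_R(M)/M)=M_q/M$.
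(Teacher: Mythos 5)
Your proof is correct, and it does more work than the paper's, which is only two lines. The paper disposes of $(1)\Leftrightarrow(2)$ as ``follows by their definitions'' (dimension shifting along the injective resolution) and reduces $(2)\Leftrightarrow(3)$ to a citation, namely that a $\Q$-torsion-free module $N$ is Lucas if and only if $\Ext_R^1(T,N)=0$ for every $\Q$-torsion $T$ (\cite[Proposition 1.2]{sroL}). Your argument rests on the same two pillars but is self-contained: the presentation $0\to K\to \bigoplus_\lambda R/I_\lambda\to T\to 0$, the vanishing of $\Hom_R$ from a torsion module into a torsion-free one, and the induction on $n$ together constitute precisely a proof (extended to all degrees) of that cited lemma, so your cycle $(3)\Rightarrow(1)\Rightarrow(3)$, $(1)\Leftrightarrow(2)$ replaces the external reference by an elementary argument with no finiteness hypotheses. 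You also make explicit a point the paper glosses over: since $(2)$ quantifies only over $n\geq 1$, the implication $(2)\Rightarrow(1)$ must recover the Lucas property of $M$ itself, which you correctly extract from the $\Q$-torsion-freeness of $\Omega_1(M)$ via the identity $\tor_{\Q}(\E_R(M)/M)=M_q/M$. Note that this identity, and indeed the truth of $(1)\Rightarrow(2)$, forces the cosyzygies to be taken along the minimal injective resolution (as you do): padding $E^0$ with an injective $\Q$-torsion summand would make $E^0/M$ fail to be $\Q$-torsion-free, hence fail to be Lucas in the paper's sense. What the paper's proof buys is brevity given the reference; what yours buys is independence from \cite{sroL} and, as a by-product, the slightly stronger conclusion that every cosyzygy $\Omega_n(M)$ of a strongly Lucas module is itself strongly Lucas.
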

\begin{proof}  $(1)\Leftrightarrow (2)$  Follows by their definitions.
$(2)\Leftrightarrow (3)$ Follows from that a $\Q$-torsion-free $R$-module $N$ is  a  Lucas module if and only if $\Ext_R^1(T,N)=0$ for every $\Q$-torsion  $R$-module $T$ (see \cite[Proposition 1.2]{sroL}).
\end{proof}

It was proved in \cite[Proposition 2.4]{ZQ23} that every  $\T(R[x])$-module is a  Lucas $R$-module. Moreover, we have the following result.

\begin{proposition}\label{naga-lucas}
Let $M$ be a $\T(R[x])$-module. Then $M$ is a strongly Lucas $R$-module.
\end{proposition}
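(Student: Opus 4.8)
\emph{Plan.} By \cite[Proposition 2.4]{ZQ23} every $\T(R[x])$-module is already a Lucas $R$-module, so $M$ is $\Q$-torsion-free and it remains to prove the higher vanishing $\Ext_R^n(R/I,M)=0$ for every $I\in\Q$ and every $n\geq 1$. Writing $S:=\T(R[x])$, the plan is to transport these Ext-groups along the structure map $R\to S$ and to observe that each $I\in\Q$ becomes the unit ideal of $S$, which annihilates the module that controls the computation.

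First I would record that $R\to S$ is flat: the inclusion $R\to R[x]$ is free, hence flat, and $R[x]\to\T(R[x])$ inverts the non-zero-divisors of $R[x]$, so it is a localization and flat; the composite is therefore flat. Next I would show $IS=S$ for every $I\in\Q$. Since $I$ is finitely generated and semi-regular, it is a finitely generated dense ideal, say $I=(a_1,\dots,a_k)$ with $(0:_R I)=0$. Form $f=a_1+a_2x+\cdots+a_kx^{k-1}\in R[x]$; by McCoy's theorem $f$ can be a zero-divisor in $R[x]$ only if some nonzero $r\in R$ annihilates all the $a_i$, i.e.\ $r\in(0:_R I)=0$, so $f$ is a non-zero-divisor and hence a unit in $S$. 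Thus $S=fS\subseteq IS\subseteq S$, giving $IS=S$ and $S/IS=0$.

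The conclusion is then immediate from flat base change for $\Ext$. Because $S$ is flat over $R$, applying $S\otimes_R-$ to an $R$-projective resolution $P_\bullet\to R/I$ yields a projective resolution of $S\otimes_R(R/I)=S/IS$ over $S$, and the adjunction between extension and restriction of scalars gives a natural isomorphism of complexes $\Hom_S(S\otimes_R P_\bullet,M)\cong\Hom_R(P_\bullet,M)$. Passing to cohomology produces $\Ext_R^n(R/I,M)\cong\Ext_S^n(S/IS,M)$ for all $n$. Since $S/IS=0$, the right-hand side vanishes for every $n\geq 0$; in particular $\Ext_R^n(R/I,M)=0$ for all $n\geq 1$. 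As $I\in\Q$ was arbitrary, $M$ is a strongly Lucas module.

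I expect the only genuinely delicate point to be the identity $IS=S$, which rests on the McCoy-type argument above; once it is in place, flat base change for $\Ext$ finishes the proof with no further input. One could instead try to argue through the co-syzygy characterisation in Proposition \ref{c-s-lucas}, but that would require controlling the co-syzygies of $M$ as an $R$-module, which seems far less transparent than the base-change computation.
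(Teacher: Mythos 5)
Your proof is correct, and it takes a genuinely different route from the paper's. The paper's proof is a dimension-shifting argument: take an injective resolution of $M$ over $\T(R[x])$, observe that its terms restrict to injective $R$-modules (\cite[Exercise 3.15]{fk16}), so that $\Ext_R^n(R/I,M)\cong\Ext_R^1(R/I,\Ker(d_{n-1}))$, and note that the co-syzygy $\Ker(d_{n-1})$ is again a $\T(R[x])$-module, hence a Lucas $R$-module by \cite[Proposition 2.4]{ZQ23}, which kills that $\Ext^1$. You instead establish the change-of-rings isomorphism $\Ext_R^n(R/I,M)\cong\Ext_{\T(R[x])}^n\bigl(\T(R[x])/I\T(R[x]),M\bigr)$ from flatness of $R\to\T(R[x])$ --- this is the same tool the paper itself invokes as \cite[Chapter VI, Proposition 4.1.3]{CE56} in the proof of Proposition \ref{T-L} --- and then annihilate the right-hand side by proving $I\T(R[x])=\T(R[x])$: the polynomial $a_1+a_2x+\cdots+a_kx^{k-1}$ assembled from generators of $I$ is a non-zero-divisor of $R[x]$ by McCoy's theorem, because $I\in\Q$ is finitely generated and dense (a finitely generated semi-regular ideal is dense, since it contains a finitely generated dense subideal), hence a unit of $\T(R[x])$ lying in $I\T(R[x])$. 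Both steps are sound. What your route buys: it is self-contained, needing neither \cite[Proposition 2.4]{ZQ23} nor the injective-restriction fact --- indeed your isomorphism at $n=0$ and $n=1$ re-proves that every $\T(R[x])$-module is a $\Q$-torsion-free Lucas module --- and it isolates the structural reason for the vanishing, namely that every $I\in\Q$ generates the unit ideal of $\T(R[x])$. What the paper's route buys: granted the two cited results, it is a very short bootstrap from the known case $n=1$ to all $n\geq 1$, and the identical template is reused verbatim in Proposition \ref{naga-lucas-p} for modules over $R_\p$ with $\p$ non-semiregular (where your argument would also work, with the observation $I\not\subseteq\p$, hence $IR_\p=R_\p$, replacing the McCoy computation).
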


\begin{proof} Consider the injective resolution of $M$ over $\T(R[x])$:
$$0\rightarrow M\rightarrow E_0\xrightarrow{d_0} E_1\xrightarrow{d_0}\cdots \xrightarrow{d_{n-2}}E_{n-1}\xrightarrow{d_{n-1}} E_n\xrightarrow{d_n}\cdots,$$ where each $E_i$ is an injective $\T(R[x])$-module, and thus an injective $R$-module by \cite[Exercise 3.15]{fk16}.
Let $I\in \Q$ and $n\geq 1$. Then $\Ext_R^n(R/I,M)\cong \Ext_R^1(R/I,\Ker(d_{n-1}))$. Note that $\Ker(d_{n-1})$ is a $\T(R[x])$-module, and so a  Lucas $R$-module by  \cite[Proposition 2.4]{ZQ23}. Therefore
 $\Ext_R^1(R/I,\Ker(d_{n-1}))=0$, and thus $\Ext_R^n(R/I,M)=0$. Consequently, $M$ is a strongly Lucas $R$-module.
\end{proof}

\begin{proposition}\label{naga-lucas-p}
Let $\p$ be a non-semiregular prime ideal of $R$ and $M$ an $R_\p$-module. Then $M$ is a strongly Lucas $R$-module.
\end{proposition}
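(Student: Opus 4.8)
The plan is to localize everything at $\p$ and exploit the decisive elementary fact that a finitely generated semi-regular ideal can never be contained in a non-semiregular prime. First I would record: for every $I\in\Q$ we have $I\not\subseteq\p$. Indeed, by definition $I$ contains a finitely generated dense sub-ideal $J$; were $I\subseteq\p$, then $J\subseteq\p$ would be a finitely generated dense sub-ideal of $\p$, forcing $\p$ to be semi-regular, contrary to hypothesis. Hence $I_\p=R_\p$, so we may always choose some $s\in I\setminus\p$, and such an $s$ acts invertibly on every $R_\p$-module.

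Next I would isolate the claim that \emph{every $R_\p$-module $N$ is a Lucas $R$-module}. Torsion-freeness is immediate: if $Im=0$ for some $m\in N$ and $I\in\Q$, pick $s\in I\setminus\p$; then $sm=0$ gives $m=s^{-1}(sm)=0$, so $\tor_\Q(N)=0$. For the vanishing of $\Ext^1_R(R/I,N)$, apply $\Hom_R(-,N)$ to $0\to I\to R\to R/I\to 0$; since $\Ext^1_R(R,N)=0$, it suffices to extend each $f\in\Hom_R(I,N)$ to $R$. Choosing $s\in I\setminus\p$ and setting $n:=s^{-1}f(s)$, the identity $sf(a)=f(sa)=f(as)=af(s)$ yields $f(a)=an$ for all $a\in I$, so $f$ is multiplication by $n$ and extends; hence $\Ext^1_R(R/I,N)=0$ and $N$ is Lucas.

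Finally I would upgrade this to the strongly Lucas property by dimension shifting. Choose an injective resolution $0\to M\to E^0\xrightarrow{d^0}E^1\xrightarrow{d^1}\cdots$ of $M$ \emph{over $R_\p$}. Exactly as in the proof of Proposition \ref{naga-lucas}, each $E^i$, being an injective $R_\p$-module, is injective as an $R$-module: localization is flat, so the adjunction $\Hom_R(-,E^i)\cong\Hom_{R_\p}(R_\p\otimes_R-,E^i)$ exhibits $\Hom_R(-,E^i)$ as exact. Thus the resolution is also $R$-injective, and for every $I\in\Q$ and $n\geq 1$ dimension shifting gives $\Ext^n_R(R/I,M)\cong\Ext^1_R\big(R/I,\Ker(d^{n-1})\big)$. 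Since $\Ker(d^{n-1})$ is an $R_\p$-submodule of $E^{n-1}$, it is an $R_\p$-module, hence Lucas by the claim, so the right-hand side vanishes. Therefore $\Ext^n_R(R/I,M)=0$ for all $I\in\Q$ and $n\geq1$, which together with $\Q$-torsion-freeness shows $M$ is strongly Lucas. The only points needing care are passing to an $R_\p$-injective (hence $R$-injective) resolution, which keeps the co-syzygies inside the $R_\p$-module world, and the uniform treatment of all $n$ via dimension shifting; the containment observation $I\not\subseteq\p$ is what does the real work.
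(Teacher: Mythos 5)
Your proof is correct and follows essentially the same route as the paper's: first show that every $R_\p$-module is $\Q$-torsion-free and Lucas over $R$ (via the key observation $I\not\subseteq\p$ for $I\in\Q$, so some $s\in I\setminus\p$ acts invertibly), then dimension-shift along an $R_\p$-injective resolution whose terms remain $R$-injective. The only minor variation is in verifying the Lucas property: you extend each $f\in\Hom_R(I,N)$ to $R$ directly by showing $f$ is multiplication by $s^{-1}f(s)$, whereas the paper invokes the envelope characterization ($e\in\E_R(M)$ with $Ie\subseteq M$ forces $e\in M$); your variant is self-contained and equally valid.
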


\begin{proof} Let $I\in\Q$ and $m\in M$ satisfying $Im=0$. Since $I\not\subseteq\p$, there exists $s\in I-\p$. Since $sm\in Im=0$ and $s$ is a unit in $R_\p$, we have $m=0$ implying $M$ is $\Q$-torsion-free. Next, we will show $M$ is a Lucas $R$-module. Indeed, let $I\in\Q$ and $e\in E(M)$ with $Ie\subseteq M$. Let $s\in I-\p$. Then $se\in M$, and so $e\in M$ as $s$ is a unit in $R_\p$. Thus $M$ is a Lucas $R$-module.

The rest is similar with the proof of Proposition \ref{naga-lucas}, but we exhibit it for completeness. Consider the injective resolution of $M$ over $R_\p$:
$$0\rightarrow M\rightarrow E_0\xrightarrow{d_0} E_1\xrightarrow{d_0}\cdots \xrightarrow{d_{n-2}}E_{n-1}\xrightarrow{d_{n-1}} E_n\xrightarrow{d_n}\cdots,$$ where each $E_i$ is an injective $R_\p$-module, and thus an injective $R$-module by \cite[Exercise 3.15]{fk16}.
Let $I\in \Q$ and $n\geq 1$. Then $\Ext_R^n(R/I,M)\cong \Ext_R^1(R/I,\Ker(d_{n-1}))$. Since $\Ker(d_{n-1})$ is an $R_\p$-module, and then a  Lucas $R$-module by the above. So $ \Ext_R^1(R/I,\Ker(d_{n-1}))=0$, and thus $\Ext_R^n(R/I,M)=0$. Consequently, $M$ is a strongly Lucas $R$-module.
\end{proof}

Recall from \cite{H88} that a ring $R$ has \emph{property $\A$} if each finitely generated ideal $I\subseteq \Z(R)$ has a nonzero annihilator, or equivalently,  every finitely generated semiregular ring is regular. A ring with property $\A$ is said to be an \emph{$\A$-ring} throughout the paper. Integral domains, Noetherian rings, nontrivial graded rings (e.g. polynomial rings), rings with Krull dimension equal to $0$ and  Kasch  rings (i.e., rings with only one dense ideal) are all $\A$-rings. By \cite[Corollary 2.6]{H88}, a ring $R$ is an $\A$-ring if and only if so is $\T(R)$. Trivially, every strongly Lucas module is a Lucas module.
The following result shows that  Lucas modules are  also strongly Lucas modules over $\A$-rings.
\begin{theorem}\label{mccoy}
Let $R$ be an $\A$-ring. Then every Lucas $R$-module is a strongly Lucas module.
\end{theorem}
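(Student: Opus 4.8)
The plan is to use the defining property of an $\A$-ring, namely that every $I\in\Q$---being finitely generated and semi-regular---is in fact regular, hence contains a non-zero-divisor. So fix a Lucas module $M$ and an ideal $I\in\Q$, and choose a non-zero-divisor $s\in I$. The first and crucial step is to check that multiplication by $s$ is bijective on $M$. Since $sR$ is finitely generated and $(0:_R sR)=(0:_R s)=0$, we have $sR\in\Q$, so the free resolution $0\to R\xrightarrow{s}R\to R/sR\to 0$ yields $\Ext_R^1(R/sR,M)\cong\coker(s\colon M\to M)=M/sM$ and $\Hom_R(R/sR,M)\cong\Ker(s\colon M\to M)$. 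As $M$ is Lucas, $\Ext_R^1(R/sR,M)=0$, whence $sM=M$; as $M$ is $\Q$-torsion-free, $\Ker(s\colon M\to M)\subseteq\tor_\Q(M)=0$. Thus $s$ acts invertibly on $M$.

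The second step is the standard annihilator argument for $\Ext$. Because $s\in I$ and $I$ is an ideal, $sR\subseteq I$, so multiplication by $s$ is the zero endomorphism of $R/I$. By the bifunctoriality of $\Ext$, multiplication by $s$ on the $R$-module $\Ext_R^n(R/I,M)$ coincides both with the endomorphism induced by $s$ on $R/I$---which is $0$---and with the endomorphism induced by $s$ on $M$---which is an isomorphism by the first step. An endomorphism that is simultaneously zero and bijective can only act on the zero module, so $\Ext_R^n(R/I,M)=0$ for every $n\geq 1$. Since $I\in\Q$ was arbitrary and $M$ is $\Q$-torsion-free, $M$ is strongly Lucas.

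I expect the entire substance to reside in the first step: once we know that a non-zero-divisor $s\in I$ acts invertibly on the Lucas module $M$, the vanishing of all higher $\Ext_R^n(R/I,M)$ is automatic and needs neither an induction on $n$ nor a d\'evissage on the generators of $I$ (in particular Proposition \ref{c-s-lucas} is not required). The hypothesis that $R$ be an $\A$-ring enters precisely to guarantee such an $s$ inside every $I\in\Q$; for a general ring a semi-regular ideal need only be dense, so no single element of $I$ need be invertible on $M$, and the argument genuinely breaks down---consistent with the fact, recorded later in Example \ref{exam-sl-l}, that Lucas and strongly Lucas modules differ in general.
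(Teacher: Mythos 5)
Your proof is correct, and it takes a genuinely different route from the paper's. Both arguments begin identically: the $\A$-ring hypothesis produces a regular element $s$ (the paper calls it $a$) inside $I\in\Q$, and both use that $sR\in\Q$ together with the resolution $0\to R\xrightarrow{s}R\to R/sR\to 0$. From there the paper proceeds by change of rings: it shows $\Ext_R^n(R/Ra,M)=0$ for all $n\geq 1$ (from $\pd_R R/Ra\leq 1$ plus the Lucas property), invokes \cite[Chapter VI, Proposition 4.1.4]{CE56} to identify $\Ext_R^n(C,M)$ with $\Ext_{R/Ra}^n(C,\Hom_R(R/Ra,M))$ for $R/Ra$-modules $C$, proves that $\Hom_R(R/Ra,M)$ is an injective $R/Ra$-module via Baer's criterion, and then transfers back to conclude $\Ext_R^n(R/I,M)=0$. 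You instead show that $s$ acts bijectively on $M$ (surjectivity from $\Ext_R^1(R/sR,M)=0$, injectivity from $\Q$-torsion-freeness) and finish with the classical annihilator argument: multiplication by $s$ on $\Ext_R^n(R/I,M)$ is simultaneously the zero map (computed through the first variable, since $s$ kills $R/I$) and an isomorphism (computed through the second), so the group vanishes. Your route is more elementary---no change-of-rings theorem, no Baer criterion---and it also sidesteps a point the paper glosses over: the paper asserts $\Ext_R^1(R/J,M)=0$ for an \emph{arbitrary} (possibly non-finitely-generated) ideal $J$ containing $a$, whereas the Lucas hypothesis only gives vanishing for ideals in $\Q$; closing that gap needs a small extra patching argument (extensions of a map $J\to M$ from finitely generated subideals containing $a$ agree because $a$ acts injectively on $M$). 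What the paper's approach buys in exchange is the stronger intermediate fact that $\Hom_R(R/Ra,M)$ is injective over $R/Ra$, a statement of independent interest; your argument produces only the vanishing itself, which is all the theorem requires.
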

\begin{proof}Let $M$ be a Lucas $R$-module and $I\in\Q$. Since $R$ be an $\A$-ring, there exists a regular element $a\in I$. Note that $\pd_RR/Ra\leq 1$, and so $\Ext^n_R(R/Ra,M)=0$ for every $n\geq 1$. Let $J$ be an arbitrary ideal of $R$ that contains $a$. It follows by \cite[Chapter VI,Proposition 4.1.4]{CE56} that $$ \Ext^1_{R/Ra}(R/J,\Hom_R(R/Ra,M))\cong \Ext^1_{R}(R/J,M)=0.$$ Consequently, $\Hom_R(R/Ra,M)$ is an injective
$R/Ra$-module by Baer criterion. So by \cite[Chapter VI,Proposition 4.1.4]{CE56} again, we have $$\Ext^n_{R}(R/I,M)\cong \Ext^n_{R/Ra}(R/I,\Hom_R(R/Ra,M)) =0,$$ for every $n\geq 1$. So $M$ is a strongly $R$-module.
\end{proof}

The following example shows that a Lucas module need not be a strongly Lucas module in general.
\begin{example}\label{exam-sl-l}\cite[Example 3.10]{z-fpd}\label{exa-fpd-n} Let $D=k[x_1,\dots,x_n]$ be a polynomial rings with $n\ (n\geq 2)$ variables over a field $k$.  Set $\m=\langle x_1,\dots,x_n\rangle$ be a maximal ideal of $D$ and $\Pj=\Max(R)-\{\m\}$. Define $R=D(+)B$ to be the idealization constructed prior to \cite[Theorem 11]{L93}, where $B=\bigoplus\limits_{\p\in\Pj}D/\p$.  By \cite[Theorem 11(c)]{L93}, the set of all semi-regular ideals of $R$ is $\{J(+)B\mid J$ is an ideal of $D$ and $\sqrt{J}=\m\}$. Let $J$ be an ideal of $D$ satisfying $\sqrt{J}=\m$ and set $I=J(+)B$. Then we have $\Ext_R^i(R/I,R)\cong \Ext_D^i(D/J,D)$  for every $i\geq 0$. Since $\grade(J,D)=\grade(\m,D)=K.\dim(D_{\m})=n$, we have  $\Ext_R^n(R/I,R)\not=0$ and $\Ext_R^i(R/I,R)=0$ for every $0\leq i<n$. Consequently, $R$ is a  Lucas module but not a strongly Lucas module.
\end{example}

Now, we are ready to introduce the notion of $\tau_q$-projective modules using strongly Lucas modules.
\begin{definition}\label{q-proj}
Let $M$ be an $R$-module. Then $M$ is said to be $\tau_q$-projective  if $\Ext_R^1(M,N)=0$ for every strongly Lucas module $N$.
\end{definition}

Trivially, projective modules and $\Q$-torsion modules are $\tau_q$-projective modules (see Proposition \ref{c-s-lucas}). An $R$-module  $M$ is $\tau_q$-projective  if $\Ext_R^n(M,N)=0$ for every strongly Lucas module $N$ and every $n\geq 1$.

\begin{proposition}\label{T-L-proj}
Let $R$ be a ring. Then the following statements hold.
\begin{enumerate}
    \item $\bigoplus\limits_{i\in\Gamma}M_i$ is $\tau_q$-projective  if and only if so is each $M_i$.
   \item Let $0\rightarrow A\rightarrow B\rightarrow C\rightarrow 0$ be an exact sequence. If $B$ and $C$ is $\tau_q$-projective, so is $A$; if $A$ and $C$ is $\tau_q$-projective, so is $B$.
   \item Suppose $A$ is  $\tau_q$-isomorphic to $B$. If one of $A$ and $B$ is $\tau_q$-projective, so is the other.
   \end{enumerate}
\end{proposition}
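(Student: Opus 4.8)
The plan is to treat all three statements as formal consequences of the behaviour of the bifunctor $\Ext_R^\bullet(-,N)$ on short exact sequences and direct sums, as $N$ ranges over all strongly Lucas modules. Throughout I would use the strengthened form of the definition recorded just after Definition \ref{q-proj}: an $R$-module $M$ is $\tau_q$-projective if and only if $\Ext_R^n(M,N)=0$ for every strongly Lucas module $N$ and every $n\geq 1$. This equivalence follows from Proposition \ref{c-s-lucas}, since the class of strongly Lucas modules is closed under co-syzygies (if $N$ is strongly Lucas and $0\to N\to E\to N'\to 0$ has $E$ injective, then the co-syzygies of $N'$ are among those of $N$, all Lucas, so $N'$ is strongly Lucas), whence dimension shifting reduces the vanishing of $\Ext_R^{n+1}(M,N)$ to that of $\Ext_R^1(M,N')$. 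I would also use freely that every $\Q$-torsion module $T$ is $\tau_q$-projective in the strong sense that $\Ext_R^n(T,N)=0$ for all $n\geq 1$ (Proposition \ref{c-s-lucas}(3)), together with $\Hom_R(T,N)=0$, the latter because $N$ is $\Q$-torsion-free.

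For (1), I would fix a strongly Lucas module $N$ and invoke the natural isomorphism $\Ext_R^1(\bigoplus_{i\in\Gamma}M_i,N)\cong\prod_{i\in\Gamma}\Ext_R^1(M_i,N)$. The product on the right vanishes precisely when each factor $\Ext_R^1(M_i,N)$ vanishes, so $\bigoplus_{i}M_i$ is $\tau_q$-projective if and only if every $M_i$ is. This carries no obstacle.

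For (2), I would apply $\Hom_R(-,N)$ to $0\to A\to B\to C\to 0$ and read off the long exact sequence
$$\cdots\to\Ext_R^1(C,N)\to\Ext_R^1(B,N)\to\Ext_R^1(A,N)\to\Ext_R^2(C,N)\to\cdots.$$
If $A$ and $C$ are $\tau_q$-projective, the outer terms $\Ext_R^1(C,N)$ and $\Ext_R^1(A,N)$ vanish, forcing $\Ext_R^1(B,N)=0$; as $N$ was arbitrary, $B$ is $\tau_q$-projective. If instead $B$ and $C$ are $\tau_q$-projective, I would use the segment $\Ext_R^1(B,N)\to\Ext_R^1(A,N)\to\Ext_R^2(C,N)$, in which $\Ext_R^1(B,N)=0$ and, crucially, $\Ext_R^2(C,N)=0$ by the strengthened characterization applied to $C$; hence $\Ext_R^1(A,N)=0$ and $A$ is $\tau_q$-projective. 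This second case is the only place where the higher-$\Ext$ vanishing is genuinely used, so the point to verify is that the strengthened definition is in force.

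For (3), suppose $f$ realizes the $\tau_q$-isomorphism between $A$ and $B$, so that $K:=\Ker(f)$ and $C:=\Cok(f)$ are $\Q$-torsion by \cite[Proposition 2.7(5)]{ZDC20}. Factoring $f$ through its image yields the two short exact sequences
$$0\to K\to A\to\Im(f)\to 0,\qquad 0\to\Im(f)\to B\to C\to 0,$$
in which $K$ and $C$ are $\tau_q$-projective, being $\Q$-torsion. Applying $\Hom_R(-,N)$ to the first sequence and using $\Hom_R(K,N)=0=\Ext_R^1(K,N)$ produces a natural isomorphism $\Ext_R^1(\Im(f),N)\cong\Ext_R^1(A,N)$, so $\Im(f)$ is $\tau_q$-projective exactly when $A$ is. For the second sequence, since $C$ is $\tau_q$-projective, both implications of part (2) apply and give that $B$ is $\tau_q$-projective exactly when $\Im(f)$ is. Chaining these two equivalences shows $A$ is $\tau_q$-projective if and only if $B$ is. The \emph{main obstacle} here is purely organizational: one must split the $\tau_q$-isomorphism into the kernel–image and image–cokernel extensions, identify both $\Q$-torsion pieces as $\tau_q$-projective, and apply the correct implication of part (2) in each sequence; once this is set up, the homological algebra is routine.
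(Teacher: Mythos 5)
Your proof is correct and takes essentially the same approach as the paper: for part (3) you use the identical kernel--image and image--cokernel decomposition and kill the $\Q$-torsion pieces against strongly Lucas modules, while your arguments for (1) and (2) are exactly the standard $\Ext$ arguments the paper dismisses as classical. The only real difference is that you explicitly justify the higher-$\Ext$ characterization of $\tau_q$-projectivity (via closure of strongly Lucas modules under co-syzygies and dimension shifting), a fact the paper merely asserts after Definition \ref{q-proj}; this is a worthwhile addition, since that vanishing of $\Ext_R^2(C,N)$ is genuinely needed in part (2).
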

\begin{proof} We only prove (3) since the proofs of  $(1)$ and (2) are classical.

Let $f:A\rightarrow B$ be a $\tau_q$-isomorphism. Then $\Ker(f)$ and $\Cok(f)$ are  $\Q$-torsion. Consider the short exact sequences $0\rightarrow \Ker(f)\rightarrow A\rightarrow \Im(f)\rightarrow 0$ and $0\rightarrow \Im(f)\rightarrow B\rightarrow \Cok(f)\rightarrow 0$. Let $N$ be a strongly Lucas module. Then we have the following exact sequences:
$$0=\Hom_R(\Ker(f),N)\rightarrow\Ext^1_R(\Im(f),N)\rightarrow \Ext^1_R(A,N)\rightarrow \Ext^1_R(\Ker(f),N)=0$$
and
$$0=\Ext^1_R(\Cok(f),N)\rightarrow \Ext^1_R(B,N)\rightarrow \Ext^1_R(\Im(f),N)\rightarrow \Ext^{2}_R(\Cok(f),N)=0.$$
Consequently, $A$  is $\tau_q$-projective if and only if so is $\Im(f)$, if and only if so is $B$.
\end{proof}

Recall from \cite[Definition 3.7]{wzcc20} that a ring $R$ is said to be a  \emph{$\DQ$-ring} if  every ideal of $R$ is a Lucas module. It follows by \cite[Theorem 3.9]{wzcc20} and \cite[Proposition 2.2]{fkxs20} that a ring   $R$ is a $\DQ$-ring if and only if every prime (maximal) ideal of $R$ is non-semiregular over $R$, if and only if every $R$-module is a Lucas module, if and only if the only finitely  generated semiregular ideal is $R$ itself, if and only if the small finitistic dimension $\fPD(R)$ of $R$ is $0$. Now, we characterize $\DQ$-rings in terms of strongly Lucas modules and $\tau_q$-projective modules.

\begin{theorem}\label{dq}
Let $R$ be a  ring. Then the following statements are equivalent.
\begin{enumerate}
    \item $R$ is a $\DQ$-ring.
    \item Every $R$-module is a strongly Lucas module.
    \item  Every ideal of $R$ is a strongly Lucas module.
        \item  Every ideal of $R$ is  a Lucas module.
 \item   Every $\tau_q$-projective module is projective.
\end{enumerate}
\end{theorem}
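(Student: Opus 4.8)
The plan is to prove the theorem by establishing a cycle of implications $(1)\Ra(2)\Ra(3)\Ra(4)\Ra(1)$ together with the equivalence $(1)\Lra(5)$. Several of these are essentially immediate from the definitions and from facts already available in the excerpt. The characterization of $\DQ$-rings recalled just before the statement gives me that $(1)$ is equivalent to ``every $R$-module is a Lucas module'' and to ``the only finitely generated semiregular ideal is $R$ itself.'' I would lean on this repeatedly.

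First I would handle the easy arrows. For $(2)\Ra(3)\Ra(4)$: every ideal is a submodule, so if all modules are strongly Lucas then in particular all ideals are; and every strongly Lucas module is a Lucas module (noted right before Theorem \ref{mccoy}), giving $(3)\Ra(4)$. The arrow $(4)\Ra(1)$ is literally the definition of a $\DQ$-ring, so it is immediate. The substantive upward arrow is $(1)\Ra(2)$: I would use the recalled fact that $R$ is a $\DQ$-ring iff the only finitely generated semiregular ideal is $R$ itself, i.e. $\Q=\{R\}$ (up to the containment of a finitely generated dense subideal). If $\Q$ contains no proper ideals in the relevant sense, then the defining conditions $\Ext_R^n(R/I,M)=0$ for $I\in\Q$ become vacuous (the only relevant $I$ is $R$, for which $R/I=0$), so every $\Q$-torsion-free module is automatically strongly Lucas; and when $\Q=\{R\}$ every module is $\Q$-torsion-free, so every module is strongly Lucas. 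This pins down the main conceptual point: over a $\DQ$-ring the class $\Q$ is trivial, which collapses both the Lucas and strongly Lucas conditions.

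For the equivalence with $(5)$, I would argue both directions. For $(1)\Ra(5)$: if $R$ is a $\DQ$-ring, then by the equivalence already established every module is strongly Lucas, so the defining Ext-orthogonality condition for a $\tau_q$-projective module $M$, namely $\Ext_R^1(M,N)=0$ for all strongly Lucas $N$, becomes $\Ext_R^1(M,N)=0$ for all modules $N$, which forces $M$ to be projective. For the converse $(5)\Ra(1)$, I would argue contrapositively: if $R$ is not a $\DQ$-ring, I must exhibit a $\tau_q$-projective module that is not projective. The natural candidate is a nonzero $\Q$-torsion module, since these are always $\tau_q$-projective (noted right after Definition \ref{q-proj}); when $R$ is not $\DQ$ there exists a proper finitely generated semiregular ideal $I\subsetneq R$, and $R/I$ is then a nonzero $\Q$-torsion module. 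The task is to check $R/I$ is not projective for a suitable such $I$.

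The step I expect to be the main obstacle is exactly this last one: producing a $\Q$-torsion module that fails to be projective. A nonzero $\Q$-torsion module $T$ satisfies $IT=0$ for some $I\in\Q$, and if $T$ were projective it would be a direct summand of a free module $F$; I would want to derive a contradiction from the fact that $I$ annihilates $T$ while $I$ is dense (contains a finitely generated dense subideal), so $I$ has zero annihilator and cannot annihilate a nonzero free-module summand. Concretely, a nonzero projective module $T$ with $IT=0$ forces $(0:_RI)\neq 0$ for the annihilator, contradicting that $I$ contains a dense subideal. I would take $T=R/I$ with $I$ a proper finitely generated semiregular (hence dense-containing) ideal, so $I\cdot(R/I)=0$ while $R/I\neq0$; if $R/I$ were projective it would be a summand of a free module annihilated by a dense ideal, which is impossible. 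Assembling these pieces completes the cycle and the equivalence with $(5)$.
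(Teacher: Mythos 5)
Your proposal is correct and takes essentially the same route as the paper: the same cycle of near-trivial implications, with the whole weight resting on $(5)\Rightarrow(1)$ via the same witness module, namely $R/I$ for a proper $I\in\Q$, which is $\Q$-torsion hence $\tau_q$-projective hence (under $(5)$) projective. The only cosmetic difference is the final contradiction: the paper notes that projectivity of $R/I$ splits $0\to I\to R\to R/I\to 0$, so $I=Re$ for an idempotent $e$, and then $1-e\in(0:_RI)=0$ forces $I=R$, whereas you embed $R/I$ as a summand of a free module and contradict density of $I$ directly --- both arguments come down to the single fact that a semiregular ideal has zero annihilator.
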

\begin{proof} $(1)\Rightarrow (2)$ Suppose $R$ is a $\DQ$-ring. Then $\Q=\{R\}$, and thus every $R$-module is a strongly Lucas module.

$(2)\Rightarrow (5)$ and $(2)\Rightarrow (3)\Rightarrow (4)\Rightarrow (1)$ Trivial.

$(5)\Rightarrow (1)$ Let $I\in \Q$. Then $R/I$ is $\Q$-torsion, and so $\tau_q$-projective. Hence $R/I$ is projective. So $I$ is generated by an idempotent $e$ in $R$ by \cite[Chapter I, Proposition 1.10]{FS01}. Since $I$ is semi-regular, we have $I=R$, and so $R$ is a $\DQ$-ring.
\end{proof}

\begin{proposition}\label{T-L}
Let $M$ be a $\tau_q$-projective $R$-module. Then the following statements hold.
\begin{enumerate}
    \item $M\otimes_R\T(R[x])$ is a projective $\T(R[x])$-module.
   \item $M_\p$ is a free $R_\p$-module for every non-semiregular prime ideal $\p$ of $R$.
   \end{enumerate}
\end{proposition}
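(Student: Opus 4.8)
The plan is to handle both parts by the same mechanism, namely flat base change of $\Ext$ combined with the two ``strongly Lucas'' results already established. In each statement the target ring---$\T(R[x])$ in (1) and $R_\p$ in (2)---is flat over $R$: indeed $\T(R[x])$ is a localization of $R[x]$, which is free over $R$, and $R_\p$ is a localization of $R$. The idea is that for a flat ring map $R\to S$ one can compute $\Ext_S$ against an $S$-module in terms of $\Ext_R$, and that every module over $\T(R[x])$ (respectively over $R_\p$ when $\p$ is non-semiregular) is a strongly Lucas $R$-module; the $\tau_q$-projectivity of $M$ then kills the relevant first $\Ext$.

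First I would record the base-change isomorphism. Choosing a projective resolution $P_\bullet\to M$ over $R$, flatness of $S$ over $R$ guarantees that $P_\bullet\otimes_R S\to M\otimes_R S$ remains a projective resolution over $S$; the adjunction $\Hom_S(P\otimes_R S,N)\cong\Hom_R(P,N)$ for an $S$-module $N$ then identifies the two $\Hom$-complexes, yielding
\[
\Ext^n_S(M\otimes_R S,N)\cong\Ext^n_R(M,N)\qquad(n\geq 0).
\]

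Next I would apply this with $S=\T(R[x])$. Given an arbitrary $\T(R[x])$-module $N$, Proposition \ref{naga-lucas} says $N$ is strongly Lucas over $R$, so $\Ext^1_R(M,N)=0$ by $\tau_q$-projectivity of $M$; the base-change isomorphism at $n=1$ gives $\Ext^1_{\T(R[x])}(M\otimes_R\T(R[x]),N)=0$. Since $N$ is arbitrary, $M\otimes_R\T(R[x])$ is projective over $\T(R[x])$, proving (1). For (2) I would run the identical argument with $S=R_\p$: here Proposition \ref{naga-lucas-p} supplies that every $R_\p$-module is strongly Lucas over $R$, so $\Ext^1_{R_\p}(M_\p,N)=0$ for every $R_\p$-module $N$ and $M_\p$ is projective over $R_\p$. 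Finally, because $R_\p$ is local, a projective module over it is free, so $M_\p$ is free.

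The step that needs the most care is the base-change isomorphism: one must use that $N$ is genuinely a module over the target ring (so the tensor-hom adjunction applies) and that flatness of $S$ is what keeps $P_\bullet\otimes_R S$ a resolution, so the cohomology computation is legitimate. Once that is in place, the rest is a direct combination of Propositions \ref{naga-lucas} and \ref{naga-lucas-p} with the definition of $\tau_q$-projectivity, together with the standard fact that projective modules over a local ring are free.
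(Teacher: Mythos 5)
Your proof is correct and takes essentially the same route as the paper: the paper likewise fixes an arbitrary module $N$ over $\T(R[x])$ (resp.\ $R_\p$), invokes Proposition \ref{naga-lucas} (resp.\ Proposition \ref{naga-lucas-p}) to see that $N$ is strongly Lucas over $R$, and then applies the flat base-change isomorphism for $\Ext$ (cited from Cartan--Eilenberg, Chapter VI, Proposition 4.1.3, which is exactly the isomorphism you prove by hand via resolutions and adjunction) to conclude $\Ext^1$ vanishes over the target ring. Your explicit appeal to Kaplansky's theorem (projective over a local ring implies free) correctly supplies the step the paper leaves implicit in its ``(2) is similar'' remark.
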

\begin{proof} (1) Suppose the $R$-module $M$ is  $\tau_q$-projective. Let $N$ be a $\T(R[x])$-module. Then, by Proposition \ref{naga-lucas}, $N$ is a strongly Lucas $R$-modules. It follows by \cite[Chapter VI,Proposition 4.1.3]{CE56} that $$\Ext_{\T(R[x])}^1(M\otimes_R\T(R[x]),N)\cong \Ext_{R}^1(M,N)=0.$$ Hence $M\otimes_R\T(R[x])$ is a projective $\T(R[x])$-module.

(2) It is similar with that of (1).
\end{proof}

\begin{remark}\label{c-T-L}
The converse of Proposition \ref{T-L}(2) does not hold in general. Let $R$ be a von Neumann regular ring but not semisimple. Then $R$ is a $\DQ$-ring. So every proper ideal is non-semiregular, and every $\tau_q$-projective $R$-module is projective. Since $R_\p$ is a field, so every non-projective $R$-module is a counterexample. It is an interesting question to ask whether the converse of Proposition \ref{T-L}(1) holds?
\end{remark}

\begin{theorem}\label{T-L-tau}
Let  $M$ be a $\tau_q$-finitely generated  $R$-module. If one of the following statements hold
 \begin{enumerate}
    \item $M\otimes_R\T(R[x])$ is a projective $\T(R[x])$-module;
      \item $M_{\m}$ is a free $R_{\m}$-module for every $\m\in q$-$\Max(R)$,
   \end{enumerate}
then $\Ext^1_R(M,N)$ is $\Q$-torsion for every strongly Lucas $R$-module $N$.
If, moreover, $R$ is an $\A$-ring, then $M$ is $\tau_q$-projective.
\end{theorem}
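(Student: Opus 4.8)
The plan is to prove the two assertions separately: first that $\Ext^1_R(M,N)$ is $\Q$-torsion for every strongly Lucas $N$ (using hypotheses (1)/(2) but \emph{not} property $\A$), and then, over an $\A$-ring, that this $\Ext$-group is also $\Q$-torsion-free, so that the two facts together force it to vanish, i.e. $M$ is $\tau_q$-projective. Throughout I would use the criterion that an $R$-module $X$ is $\Q$-torsion if and only if $X\otimes_R\T(R[x])=0$: one direction is the fact, already used in the paper, that a finitely generated dense ideal becomes the unit ideal in $\T(R[x])$, and the converse is a McCoy-type argument producing a finitely generated dense annihilator from a vanishing element. Equivalently, $X$ is $\Q$-torsion if and only if $X_\m=0$ for every $\m\in q$-$\Max(R)$.

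For the first assertion the difficulty is that $M$ is only $\tau_q$-finitely generated, whereas the clean base-change and localization isomorphisms for $\Ext^1$ require genuine finite presentation. My first step is therefore to upgrade $M$ to a finitely presented module up to $\tau_q$-isomorphism. Choosing a $\tau_q$-epimorphism $F_0\to M$ with $F_0$ finitely generated free and kernel $K$, hypothesis (1) makes $M\otimes_R\T(R[x])$ finitely generated projective, so $K\otimes_R\T(R[x])$ is a direct summand of a finitely generated free module, hence finitely generated; clearing denominators produces a finitely generated $K'\subseteq K$ with $(K/K')\otimes_R\T(R[x])=0$, so $K/K'$ is $\Q$-torsion and $M$ is $\tau_q$-finitely presented. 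A short lemma then shows that a $\tau_q$-finitely presented module is $\tau_q$-isomorphic to a genuinely finitely presented module $P$ (take $P$ to be the cokernel of the presenting map and compare it with $M$ modulo the $\Q$-torsion image of the relation module). Since both hypothesis (1) and the target property are $\tau_q$-invariant—the latter because, for strongly Lucas $N$, a $\tau_q$-isomorphism induces an isomorphism on $\Ext^1(-,N)$ by the argument of Proposition \ref{T-L-proj}(3)—I may replace $M$ by $P$. Flat base change along the flat extension $\T(R[x])$ then gives
$$\Ext^1_R(P,N)\otimes_R\T(R[x])\cong\Ext^1_{\T(R[x])}(P\otimes_R\T(R[x]),\,N\otimes_R\T(R[x]))=0,$$
the last equality because $P\otimes_R\T(R[x])$ is projective; hence $\Ext^1_R(M,N)$ is $\Q$-torsion. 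Under hypothesis (2) I would use the same reduction to $P$, after which $\Ext^1_R(P,N)_\m\cong\Ext^1_{R_\m}(P_\m,N_\m)=0$ for each $\m\in q$-$\Max(R)$, since $P_\m\cong M_\m$ is free.

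The second assertion is where property $\A$ enters, and here the key observation is structural. If $N$ is strongly Lucas, hence Lucas, then for any regular element $a$ the ideal $(a)$ is finitely generated and dense, so $(a)\in\Q$ and $N/aN=\Ext^1_R(R/aR,N)=0$; as $N$ is $\Q$-torsion-free, multiplication by $a$ is an automorphism of $N$. Thus every regular element of $R$ acts invertibly on $N$, i.e. $N$ is a module over the total quotient ring $\T(R)$, and by functoriality $\Ext^1_R(M,N)$ is then a $\T(R)$-module as well. Over an $\A$-ring every $I\in\Q$ contains a regular element, so any nonzero element of $\Ext^1_R(M,N)$ annihilated by some $I\in\Q$ would be annihilated by a regular element that acts invertibly—a contradiction; hence $\Ext^1_R(M,N)$ is $\Q$-torsion-free. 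Combined with the first assertion it vanishes, and $M$ is $\tau_q$-projective.

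The main obstacle I expect is precisely the finite-presentation upgrade in the first assertion: the passage from \emph{$\tau_q$-finitely generated} to \emph{$\tau_q$-finitely presented} is immediate under hypothesis (1) but under hypothesis (2) alone it amounts to globalizing the local finite generation of the syzygy $K$. I would handle this either by first deducing (1) from (2) using the relation between $\T(R[x])$-modules and the localizations $R_\m$ for $\m\in q$-$\Max(R)$, or by carrying out the local computation directly once the reduction to a finitely presented $P$ is in place. The second assertion, by contrast, is short once one notices the $\T(R)$-module structure carried by every strongly Lucas module.
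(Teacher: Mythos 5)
Your treatment of hypothesis (1) is correct and rests, at bottom, on the same mechanism as the paper's: both compare $\Ext^1_R(M,N)$ with $\Ext^1_{\T(R[x])}(M\otimes_R\T(R[x]),N\otimes_R\T(R[x]))$ and win because the comparison map is injective and the target vanishes. The paper does this directly for the $\tau_q$-finitely generated $M$, via a two-row $\Hom$/$\Ext$ diagram in which $\lambda_P$ is an isomorphism and $\lambda_A$ is a monomorphism (the latter using $\Q$-torsion-freeness of $N$); you instead first replace $M$, up to $\tau_q$-isomorphism, by a genuinely finitely presented module $P$ (your clearing-of-denominators construction is valid, and the transfer of hypothesis and conclusion along $\tau_q$-isomorphisms is exactly the argument of Proposition \ref{T-L-proj}(3)). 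One caveat: your displayed base change is claimed as an isomorphism, but $\Ext^1$ commutes with flat base change only for modules with finitely generated projective resolutions through degree two; for a merely finitely presented $P$ one gets injectivity of the natural map, because the first syzygy is finitely generated. Since the target is zero, injectivity is all you need, so this is an overstatement rather than a gap. Your proof of the second assertion ($\A$-ring implies $\tau_q$-projective) is correct and genuinely different from the paper's, and cleaner: the paper shows $\Ext^1_R(M,N)$ is $\Q$-torsion-free by a chain of reductions ($\Im(h)$ Lucas, which follows from $\Ext^2_R(R/I,\Hom_R(M,N))=0$, which follows from $\Hom_R(M,N)$ being Lucas together with Theorem \ref{mccoy}), whereas you observe that every Lucas module is a $\T(R)$-module (multiplication by a regular $a$ is injective by torsion-freeness and surjective because $N/aN\cong\Ext^1_R(R/Ra,N)=0$ as $Ra\in\Q$), hence so is $\Ext^1_R(M,N)$, and property $\A$ puts a regular element inside every $I\in\Q$. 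Both routes use property $\A$ only through this last fact, but yours isolates it and avoids the homological bookkeeping.

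The genuine gap is hypothesis (2), which you correctly flagged as the main obstacle but did not close; moreover, neither of your two proposed repairs can work, because the statement under (2) is false. Take $R=\prod_{i=1}^{\infty}\mathbb{F}_2$ and $M=R/I$ with $I=\bigoplus_{i=1}^{\infty}\mathbb{F}_2$. Then $R$ is von Neumann regular, hence zero-dimensional and so an $\A$-ring, and it is a $\DQ$-ring: the only finitely generated dense ideal is $R$, so $\Q=\{R\}$, $q$-$\Max(R)=\Max(R)$, every module is strongly Lucas (Theorem \ref{dq}), and ``$\Q$-torsion'' means zero. The module $M$ is cyclic, hence $\tau_q$-finitely generated, and $M_\m$ is a vector space over the field $R_\m$, hence free, for every maximal $\m$; so (2) holds. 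Yet $M$ is not projective (otherwise $I$ would be generated by an idempotent, hence finitely generated), so $\Ext^1_R(M,N)\neq 0$ for some strongly Lucas $N$. Thus (2) yields neither the torsion conclusion nor, over this $\A$-ring, $\tau_q$-projectivity. In particular (2) cannot imply (1) --- which kills your first repair --- and no local base-change computation after a finite-presentation reduction can succeed either, since no such reduction exists here ($I$ is not $\tau_q$-finitely generated).

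You should not regard this as a defect of your approach alone: the paper's entire proof of case (2) is the phrase ``(2) can be proved similarly,'' and it breaks at exactly the point you sensed. The localization analogue of the monomorphism $\lambda_A$ fails, because $\Q$-torsion-freeness of $N$ says nothing about torsion with respect to elements $s\notin\m$, i.e.\ it does not make $N\to N_\m$ injective; the example above shows this failure is irreparable, and it is in fact consistent with the paper's own Remark \ref{c-T-L}, which exhibits the same von Neumann regular rings as counterexamples to the converse of Proposition \ref{T-L}(2). In summary: your argument for case (1) and for the $\A$-ring conclusion under (1) is sound (modulo weakening one isomorphism to a monomorphism), and your second-part argument is a genuine improvement on the paper's; case (2) remains a real gap, but it is a gap in the theorem itself, inherited from the paper.
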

\begin{proof} We only prove the case $(1)$ since $(2)$ can be proved similarly.
Let  $M$ be a $\tau_q$-finitely generated  $R$-module. Then $M\otimes_R\T(R[x])$ is a finitely generated $\T(R[x])$-module by \cite[Theorem 3.3]{ZQ23}. Suppose $M\otimes_R\T(R[x])$ is a projective $\T(R[x])$-module. Let $N$ be a strongly Lucas $R$-module and $0\rightarrow A\rightarrow P\rightarrow M\rightarrow 0$ be a $\tau_q$-exact sequence with $P$ finitely generated projective. Then we have the following commutative diagram with rows exact (we denote by $\otimes_R\T(R[x]):=^T$):
$$\xymatrix@R=20pt@C=20pt{
0\ar[r]^{}&\Hom_R(M,N)^T \ar[r]^{}\ar[d]^{\lambda_{M}} &\Hom_R(P,N)^T \ar[d]^{\cong}_{\lambda_{P}}\ar[r]^{} &\Hom_R(A,N)^T \ar[d]_{\lambda_{A}}\\
0\ar[r]&\Hom_{\T(R[x])}(M^T,N^T) \ar[r]^{} &\Hom_{\T(R[x])}(P^T,N^T) \ar[r]^{} & \Hom_{\T(R[x])}(A^T,N^T)  \\ }$$
and
$$\xymatrix@R=20pt@C=20pt{
\Hom_R(P,N)^T \ar[r]^{}\ar[d]_{\cong}^{\lambda_{P}} &\Hom_R(A,N)^T \ar[d]_{\lambda_{A}}\ar[r]^{} &\Ext^1_R(M,N)^T \ar[d]_{\lambda^1_{M}}\ar[r]^{} &0\\
\Hom_{\T(R[x])}(P^T,N^T) \ar[r]^{} &\Hom_{\T(R[x])}(A^T,N^T) \ar[r]^{} & \Ext^1_{\T(R[x])}(M^T,N^T) \ar[r]^{} &0 \\ }$$
Since $N$ is $\Q$-torsion free, one can verify $\lambda_{A}$ is a monomorphism (similar to \cite[Theorem 6.7.14]{fk16}(1)). So $\lambda^1_{M}$ is also a monomorphism. Since $\Ext^1_{\T(R[x])}(M^T,N^T)=0$, $\Ext^1_R(M,N)$ is $\Q$-torsion by \cite[Proposition 2.3.]{ZQ23}.

Now assume $R$ is an $\A$-ring. Let $0\rightarrow A\rightarrow F\rightarrow M\rightarrow 0$ be a short exact sequence with with $F$ free.  
Consider the exact sequence
$$0\rightarrow \Hom_R(M,N)\rightarrow \Hom_R(F,N)\xrightarrow{h} \Hom_R(A,N)\rightarrow \Ext^1_R(M,N) \rightarrow0.$$
 To show $\Ext^1_R(M,N)$ is $\Q$-torsion-free, it is enough to show $\Im(h)$ is a Lucas module. Then it is enough to show $\Ext_R^2(R/I,\Hom_R(M,N))=0$ for every $I\in\Q$. Since $R$ is an $\A$-ring, we just need to show  $\Hom_R(M,N)$ is a Lucas module  by Theorem \ref{mccoy}. Considering the above exact sequence again, we just need to show  $\Im(h)$ is $\Q$-torsion-free, which is correct since it is a submodule of the $\Q$-torsion-free module $\Hom_R(A,N)$.  Consequently, $\Ext^1_R(M,N)=0$ for every strongly Lucas $R$-module $N$, that is, $M$ is $\tau_q$-projective.
\end{proof}

\begin{corollary}\label{p-f-2}
Let $R$ be an $\A$-ring and  $M$ a finitely generated $\tau_q$-projective $R$-module. Then $M[x]$ is a  $\tau_q$-projective $R[x]$-module.
\end{corollary}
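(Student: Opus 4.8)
The plan is to apply Theorem \ref{T-L-tau} with the base ring taken to be $S=R[x]$ and the module taken to be $M[x]=M\otimes_R R[x]$, regarded as an $S$-module. Three hypotheses must be verified: that $S=R[x]$ is an $\A$-ring, that $M[x]$ is $\tau_q$-finitely generated over $S$, and that condition $(1)$ of Theorem \ref{T-L-tau} holds for $S$, namely that $M[x]\otimes_{S}\T(S[y])$ is a projective $\T(S[y])$-module, where $y$ is a fresh variable so that $S[y]=R[x][y]$. The first two are immediate: polynomial rings are $\A$-rings (as recorded in the discussion preceding Theorem \ref{mccoy}), and since $M$ is finitely generated over $R$ the module $M[x]=M\otimes_R R[x]$ is finitely generated over $R[x]$ by the same generators, hence $\tau_q$-finitely generated. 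So the whole matter reduces to the projectivity condition.

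For that, first I would use that $M$ is $\tau_q$-projective over $R$: by Proposition \ref{T-L}(1), $M\otimes_R\T(R[x])$ is a projective $\T(R[x])$-module. Next I would exploit functoriality of the total ring of quotients. The inclusion $R[x]\hookrightarrow R[x][y]$ carries non-zero-divisors to non-zero-divisors (if $f\in R[x]$ is a non-zero-divisor and $fg=0$ in $R[x][y]$, then comparing coefficients in $y$ forces each coefficient of $g$ to be annihilated by $f$, whence $g=0$), so it induces a ring homomorphism $\T(R[x])\to\T(R[x][y])$. A routine associativity-of-tensor computation then gives
$$M[x]\otimes_{R[x]}\T(R[x][y])\cong M\otimes_R\T(R[x][y])\cong\big(M\otimes_R\T(R[x])\big)\otimes_{\T(R[x])}\T(R[x][y]).$$
The right-hand side is the base change of a projective $\T(R[x])$-module along $\T(R[x])\to\T(R[x][y])$, hence a projective $\T(R[x][y])$-module. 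This establishes condition $(1)$ of Theorem \ref{T-L-tau} for $S=R[x]$.

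Combining the three verified hypotheses, Theorem \ref{T-L-tau} yields that $M[x]$ is a $\tau_q$-projective $R[x]$-module, as desired. The one step that requires genuine care is the middle one: confirming that $R[x]\hookrightarrow R[x][y]$ preserves non-zero-divisors so that $\T(R[x])\to\T(R[x][y])$ is well defined, and then pinning down the base-change isomorphism above so that projectivity transfers. Everything else is bookkeeping; note in particular that the hypothesis that $R$ itself be an $\A$-ring is used only implicitly here, since what the argument actually needs is that $R[x]$ be an $\A$-ring, which holds automatically.
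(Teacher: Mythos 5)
Your proof is correct and takes essentially the same route as the paper's: both first get that $M\otimes_R\T(R[x])$ is projective over $\T(R[x])$, then use the base-change isomorphism $M[x]\otimes_{R[x]}\T(R[x,y])\cong (M\otimes_R\T(R[x]))\otimes_{\T(R[x])}\T(R[x,y])$ to see that $M[x]\otimes_{R[x]}\T(R[x,y])$ is projective, and finally apply Theorem \ref{T-L-tau} over the ring $R[x]$. Your extra care (verifying that $\T(R[x])\to\T(R[x,y])$ is well defined, and noting that what is really needed is that $R[x]$ be an $\A$-ring, which holds automatically) is sound and, if anything, tightens the paper's argument.
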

\begin{proof} Since $M$ is a $\tau_q$-projective $R$-module, $M\otimes_{R}\T(R[x])$ is projective $\T(R[x])$-module by Theorem \ref{T-L-tau}. Since the natural embedding map $\T(R[x])\hookrightarrow \T(R[x,y])$ is flat, we have  $$M[x]\otimes_{R[x]}\T(R[x,y])\cong M\otimes_{R}\T(R[x,y]) \cong (M\otimes_{R}\T(R[x]))\otimes_{\T(R[x])}\T(R[x,y])$$ is an projective $\T(R[x,y])$-module. Hence the  finitely generated $R[x]$-module  $M[x]$ is a  $\tau_q$-projective by Theorem \ref{T-L-tau} again.
\end{proof}

Recall from \cite{ZQ23} that an $R$-module $M$ is said to be a \emph{$\tau_q$-flat module} provided that, for every $\tau_q$-monomorphism  $f: A\rightarrow B$,  $1_M\otimes f:M\otimes_RA\rightarrow M\otimes_R B$ is a $\tau_q$-monomorphism. It follows by \cite[Theorem 4.3]{ZQ23} that  an $R$-module $M$ is $\tau_q$-flat if and only if  $\Tor^R_1(M,N)$ is $\Q$-torsion for every $N$, if and only if $\Tor^R_n(M,N)$ is $\Q$-torsion  for every $N$ and every $n\geq 1$, if and only if  $M_\m$ is a flat $R_\m$-module for every $\m\in q$-$\Max(R)$, if and only if $M\otimes_R\T(R[x])$ is a flat  $\T(R[x])$-module.

\begin{proposition}\label{p-f}
Every $\tau_q$-projective $R$-module is $\tau_q$-flat.
\end{proposition}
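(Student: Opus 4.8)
The plan is to deduce this directly from Proposition \ref{T-L} together with the characterization of $\tau_q$-flat modules recalled from \cite[Theorem 4.3]{ZQ23}. The key observation is that both $\tau_q$-projectivity and $\tau_q$-flatness can be detected after base change to $\T(R[x])$ (or after localizing at maximal $q$-ideals), and that projective modules are always flat; so no genuinely new argument is required, and the statement should fall out as a formal consequence of work already done.

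First I would let $M$ be a $\tau_q$-projective $R$-module. By Proposition \ref{T-L}(1), the base change $M\otimes_R\T(R[x])$ is a projective $\T(R[x])$-module. Since every projective module over any ring is flat, $M\otimes_R\T(R[x])$ is in particular a flat $\T(R[x])$-module. By \cite[Theorem 4.3]{ZQ23}, the condition that $M\otimes_R\T(R[x])$ be flat over $\T(R[x])$ is exactly one of the equivalent formulations of $M$ being $\tau_q$-flat, so I would immediately conclude that $M$ is $\tau_q$-flat.

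As a sanity check, one can run the same argument locally: every $\m\in q\mbox{-}\Max(R)$ is non-semiregular (being a maximal non-semiregular ideal, as recalled in the introduction), so Proposition \ref{T-L}(2) gives that $M_\m$ is a free, hence flat, $R_\m$-module for every $\m\in q\mbox{-}\Max(R)$. This is again one of the equivalent characterizations of $\tau_q$-flatness in \cite[Theorem 4.3]{ZQ23}, yielding the same conclusion and confirming that the two routes agree.

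Since the substantive content is already packaged in Proposition \ref{T-L}, I do not expect a serious obstacle here. The only things to verify are the trivial implication ``projective $\Rightarrow$ flat'' over $\T(R[x])$ (respectively ``free $\Rightarrow$ flat'' over each $R_\m$) and the bookkeeping that identifies the resulting condition with one of the listed items in the $\tau_q$-flat characterization; both are routine, so the proof should be a short corollary of the preceding results.
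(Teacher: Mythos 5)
Your proof is correct and is essentially the paper's own argument made explicit: the paper proves this proposition by simply citing the base-change result (Proposition \ref{T-L}, though the paper's proof misprints this as Proposition \ref{T-L-proj}) together with \cite[Theorem 4.3]{ZQ23}, which is exactly the route you take. Both your $\T(R[x])$-version and your local sanity check at maximal $q$-ideals match the intended one-line deduction, so nothing further is needed.
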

\begin{proof} It follows by Proposition \ref{T-L-proj} and \cite[Theorem 4.3]{ZQ23}.
\end{proof}

\begin{proposition}\label{p-f} Let $R$ be an $\A$-ring. Then
every $\tau_q$-finitely presented $\tau_q$-flat $R$-module is $\tau_q$-projective.
\end{proposition}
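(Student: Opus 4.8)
The plan is to reduce everything to Theorem \ref{T-L-tau}. Since $M$ is $\tau_q$-finitely presented, the sequence $F_1\to F_0\to M\to 0$ shows in particular that $M$ is $\tau_q$-finitely generated. Hence, once I verify hypothesis $(1)$ of Theorem \ref{T-L-tau}---namely that $M\otimes_R\T(R[x])$ is a projective $\T(R[x])$-module---the $\A$-ring assumption will immediately give that $M$ is $\tau_q$-projective. So the entire task is to prove that $M\otimes_R T$ is projective, where I abbreviate $T:=\T(R[x])$.

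The mechanism is the classical fact that a finitely presented flat module over any commutative ring is projective, so I want to show $M\otimes_R T$ is both flat and finitely presented over $T$. Flatness is free: since $M$ is $\tau_q$-flat by hypothesis, \cite[Theorem 4.3]{ZQ23} gives that $M\otimes_R T$ is a flat $T$-module. For finite presentation I start from a $\tau_q$-exact presentation $F_1\xrightarrow{g} F_0\xrightarrow{f} M\to 0$ with $F_0,F_1$ finitely generated free and tensor it with $T$. The crucial observation is that $T=\T(R[x])$ is flat as an $R$-module: it is the localization of $R[x]$ at its regular elements, hence flat over $R[x]$, and $R[x]$ is free over $R$, so the composite is $R$-flat. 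Now $\tau_q$-exactness at $F_0$ says $\Ker(f)/\Im(g)$ is $\Q$-torsion and $\Cok(f)$ is $\Q$-torsion; since $\Q$-torsion modules vanish under $-\otimes_R T$ (\cite[Proposition 2.3]{ZQ23}) and flatness of $T$ commutes with $\Ker$, $\Im$ and $\Cok$, tensoring the short exact sequence $0\to \Im(g)\to \Ker(f)\to \Ker(f)/\Im(g)\to 0$ with $T$ yields $\Im(g\otimes T)=\Ker(f\otimes T)$, while $\Cok(f)\otimes_R T=0$ makes $f\otimes T$ surjective. Thus $F_1\otimes_R T\to F_0\otimes_R T\to M\otimes_R T\to 0$ is exact with its first two terms finitely generated free $T$-modules, so $M\otimes_R T$ is finitely presented. (Alternatively this finite presentation can be produced as the finitely-presented analogue of \cite[Theorem 3.3]{ZQ23}.)

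Combining the two facts, $M\otimes_R T$ is a finitely presented flat $T$-module, hence projective, and Theorem \ref{T-L-tau}$(1)$ together with the $\A$-ring hypothesis gives that $M$ is $\tau_q$-projective. I expect the main obstacle to be exactly the finite-presentation step: one must be certain that tensoring a merely $\tau_q$-exact presentation with $T$ returns an honestly exact free presentation of $T$-modules. This is precisely where the flatness of $T$ over $R$ does the work, turning $\tau_q$-exactness into genuine exactness once the $\Q$-torsion homology at $F_0$ is killed; without it one would only control the localizations at each $\m\in q\mbox{-}\Max(R)$ and would then have to reassemble a global finite presentation over $T$ from the local data.
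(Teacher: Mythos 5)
Your proof is correct and takes essentially the same route as the paper: the paper's own proof is exactly the citation chain ``Proposition \ref{T-L}, Theorem \ref{T-L-tau} and \cite[Theorem 4.3]{ZQ23}'', i.e., transfer $\tau_q$-flatness to flatness of $M\otimes_R\T(R[x])$, observe that finite presentation also transfers, use the classical fact that finitely presented flat modules are projective, and conclude via Theorem \ref{T-L-tau} under the $\A$-ring hypothesis. The only difference is that you spell out the finite-presentation transfer (via flatness of $\T(R[x])$ over $R$ and the vanishing of $\Q$-torsion modules under $-\otimes_R\T(R[x])$), a detail the paper leaves implicit.
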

\begin{proof}
It follows by Proposition \ref{T-L}, Theorem \ref{T-L-tau} and \cite[Theorem 4.3]{ZQ23}.
\end{proof}

\begin{proposition}\label{nil}
Let $I$ be a nonzero nil ideal of a  ring $R$. Then $I$ is not $\tau_q$-projective.
\end{proposition}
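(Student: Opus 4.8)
The plan is to argue by contradiction, assuming $I$ is $\tau_q$-projective, and to exploit the local freeness supplied by Proposition \ref{T-L}(2). The strategy is to produce a single non-semiregular prime $\p$ at which $I$ localizes to a \emph{nonzero} free $R_\p$-module, and then to observe that a nonzero free ideal of a commutative ring can never consist entirely of nilpotents. Since nilpotency of $I$ will only be invoked at the very end, the bulk of the argument is the construction of the right prime.

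First I would fix a nonzero element $a\in I$ and analyze its annihilator $(0:_Ra)$. The key observation is that $(0:_Ra)$ is a proper, non-semiregular ideal: it is proper because $a\neq 0$, and if it were semiregular it would contain a finitely generated dense subideal $J_0$ with $(0:_RJ_0)=0$; but $J_0\subseteq(0:_Ra)$ forces $aJ_0=0$, i.e. $a\in(0:_RJ_0)=0$, a contradiction. Next I would note that a union of a chain of non-semiregular ideals is again non-semiregular, since any finitely generated dense subideal of the union is generated by finitely many elements and hence already lies in one member of the chain. Zorn's lemma then enlarges $(0:_Ra)$ to a maximal non-semiregular ideal $\p$, which lies in $q$-$\Max(R)$ and in particular is a non-semiregular prime.

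Because $(0:_Ra)\subseteq\p$, no element of $R\setminus\p$ annihilates $a$, so $a/1\neq 0$ in $R_\p$; hence the ideal $I_\p\subseteq R_\p$ is nonzero. Proposition \ref{T-L}(2) now gives that $I_\p$ is free over $R_\p$. A nonzero free ideal of a commutative ring must have rank one, for two distinct basis elements $e_1,e_2$ would satisfy the nontrivial relation $e_2e_1-e_1e_2=0$, contradicting linear independence. Thus $I_\p=R_\p c$ with $(0:_{R_\p}c)=0$, i.e. the generator $c$ is a non-zero-divisor. On the other hand, $I$ nil forces $I_\p$ nil, so $c$ is nilpotent: writing $c^n=0$ with $n$ minimal (and $n\geq 2$ since $c\neq 0$), the element $c^{n-1}$ is a nonzero member of $(0:_{R_\p}c)$, contradicting $(0:_{R_\p}c)=0$. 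This contradiction shows $I$ is not $\tau_q$-projective.

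The only delicate point, and the one I expect to be the main obstacle, is guaranteeing the existence of a non-semiregular prime $\p$ with $a/1\neq 0$ in $R_\p$, equivalently with $(0:_Ra)\subseteq\p$: this rests on the two facts that $(0:_Ra)$ is itself non-semiregular and that non-semiregularity is preserved under unions of chains, which together place such a $\p$ inside $q$-$\Max(R)$. Everything after the localization is formal, and the hypothesis that $I$ is nil enters only in the final line, where it collides with the non-zero-divisor generator forced by freeness.
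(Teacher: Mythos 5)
Your proof is correct, and while it rests on the same key lemma as the paper --- Proposition \ref{T-L}(2) --- the global-to-local structure is genuinely different. The paper exploits nilpotency at the outset: $I$ nil implies $I\subseteq\m$ for \emph{every} $\m\in q$-$\Max(R)$, and over each local ring $R_\m$ the splitting of $0\to \Ann_{R_\m}(r)\to R_\m\to I_\m\to 0$ forces the free nil ideal $I_\m$ to vanish; since $I_\m=0$ at all maximal $q$-ideals, $I$ is $\Q$-torsion, which contradicts the $\Q$-torsion-freeness of ideals of $R$. You run the argument in the opposite direction: you fix a nonzero $a\in I$, show $(0:_Ra)$ is non-semiregular, and use Zorn's lemma (non-semiregularity passes to unions of chains) to locate a single $\p\in q$-$\Max(R)$ containing $(0:_Ra)$, so that $I_\p\neq 0$; there, freeness forces a rank-one generator that is a non-zero-divisor, which nilpotency kills. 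Your route avoids the $\Q$-torsion/$\Q$-torsion-free dichotomy entirely, at the price of the Zorn construction (which is in any case implicit in the description of $q$-$\Max(R)$ quoted from \cite{ZDC20}); it also makes explicit the rank-at-most-one argument for free ideals of commutative rings, a point the paper's phrase ``free principal ideal'' passes over silently. The paper's version is shorter given the torsion-theoretic machinery already in place, since showing local vanishing at every maximal $q$-ideal plugs directly into that machinery.
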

\begin{proof} On contrary, suppose $I$ is  $\tau_q$-projective. Since $I$ is nil,  $I\subseteq \m$ for every $\m\in q$-$\Max(R)$. Then, by Proposition \ref{T-L}, $I_m$ is a free principal ideal. So we may assume $R$ is local ring, and $I=Rr$ is a free principal ideal with $r$ nilpotent. Suppose the nilpotent index of $r$ is $n$. If $n>1$, $\Ann_R(r)\not=0$. Considering the exact sequence $0\rightarrow \Ann_R(r)\rightarrow R\rightarrow Rr \rightarrow 0$, we have $R=\Ann_R(r)$ since $R$ is local. So $I=0$. Going back to the general case, $I_\m=0$ for every $\m\in q$-$\Max(R)$, which implies that $I$ is $\Q$-torsion. Since $I$ is $\Q$-torsion-free,  we have $I=0$, a contradiction.  Hence $I$ is not $\tau_q$-projective.
\end{proof}

\section{The homological properties induced by $\tau_q$-projective modules}

Let $M$ be an $R$-module. If there exists an exact sequence $$0\rightarrow P_n\rightarrow \cdots\rightarrow P_1\rightarrow P_0\rightarrow M \rightarrow 0\ \ \ \ \ \ \ \ \ \ \ \ \ (\diamondsuit)$$
where each $P_i$ is $\tau_q$-projective, then we denote by $\tau_q$-\pd$_R(M)\leq n$ and say the  \emph{$\tau_q$-projective dimension} of $M$ is at most $n$.
The exact sequence $(\diamondsuit)$ is said to be  a  $\tau_q$-projective resolution of length $n$ of $M$. If such finite resolution does not exist, then we say  $\tau_q$-\pd$_R(M)=\infty$; otherwise,  define $\tau_q$-\pd$_R(M)= n$ if $n$ is the length of the shortest  $\tau_q$-projective  resolution of $M$.

Certainly, an $R$-module $M$ has  $\tau_q$-\pd$_R(M)=0$ if and only if $M$ is a $\tau_q$-projective module. For every $R$-module $M$, $\tau_q$-\pd$_R(M)\leq$\pd$_R(M)$, and ``$=$'' holds if $R$ is a $\DQ$-ring.

\begin{lemma}\label{s-p-ex}
Let $0\rightarrow A\xrightarrow{f} P\rightarrow M\rightarrow 0$ be a $\tau_q$-exact sequence with $P$  $\tau_q$-projective. Then for every strongly Lucas module $N$ and every integer $k\geq 1$, $\Ext_R^{k+1}(M,N)\cong \Ext_R^{k}(A,N)$.
\end{lemma}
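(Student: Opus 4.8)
The plan is to reduce the given $\tau_q$-exact sequence to a chain of genuine short exact sequences and then apply the long exact sequence of $\Ext$ to each, exploiting the two defining features of a strongly Lucas module $N$: it is $\Q$-torsion-free, and (by Proposition \ref{c-s-lucas}) it satisfies $\Ext_R^n(T,N)=0$ for every $\Q$-torsion module $T$ and every $n\geq 1$. The $\Q$-torsion-freeness will also be used in the form $\Hom_R(T,N)=0$ whenever $T$ is $\Q$-torsion, since the image of any such map lands in $\tor_{\Q}(N)=0$.

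The \emph{main obstacle} is that the sequence $0\to A\xrightarrow{f}P\xrightarrow{g}M\to 0$ is only $\tau_q$-exact, not exact, so it cannot be fed directly into the $\Ext$ long exact sequence; one must first isolate the places where exactness fails, all of which are $\Q$-torsion. Writing $K=\Ker(g)$ and $\Im(f)=f(A)$, and using that the sequence is a complex so that $\Im(f)\subseteq K$, the $\tau_q$-exactness says precisely that $\Ker(f)$, the homology $K/\Im(f)$ at $P$, and the cokernel $\Cok(g)=M/\Im(g)$ are all $\Q$-torsion (localization commutes with kernels, images and cokernels). I would therefore split the situation into the four genuine short exact sequences
\[
0\to \Ker(f)\to A\to \Im(f)\to 0,\qquad 0\to \Im(f)\to K\to K/\Im(f)\to 0,
\]
\[
0\to K\to P\to \Im(g)\to 0,\qquad 0\to \Im(g)\to M\to \Cok(g)\to 0.
\]

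Applying $\Hom_R(-,N)$ to each and reading off the long exact sequence, I expect the following isomorphisms for every $k\geq 1$. From the last sequence, $\Ext_R^{k+1}(\Cok(g),N)=\Ext_R^{k+2}(\Cok(g),N)=0$ gives $\Ext_R^{k+1}(M,N)\cong \Ext_R^{k+1}(\Im(g),N)$. From the third sequence, the $\tau_q$-projectivity of $P$ kills $\Ext_R^k(P,N)$ and $\Ext_R^{k+1}(P,N)$, yielding $\Ext_R^{k+1}(\Im(g),N)\cong \Ext_R^k(K,N)$ via the connecting map. From the second sequence, $K/\Im(f)$ being $\Q$-torsion gives $\Ext_R^k(K,N)\cong \Ext_R^k(\Im(f),N)$. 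Finally, from the first sequence, $\Ker(f)$ being $\Q$-torsion gives $\Ext_R^k(\Im(f),N)\cong \Ext_R^k(A,N)$. Composing the four isomorphisms yields $\Ext_R^{k+1}(M,N)\cong \Ext_R^k(A,N)$ for all $k\geq 1$, as required.

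The only subtle bookkeeping I anticipate is at the bottom of the first sequence when $k=1$: there the relevant term is $\Hom_R(\Ker(f),N)$ rather than a higher $\Ext$, and its vanishing is not covered by the vanishing of $\Ext^{\geq 1}$ of $\Q$-torsion modules. This is exactly where the $\Q$-torsion-freeness of the strongly Lucas module $N$ enters, giving $\Hom_R(\Ker(f),N)=0$ and closing the gap. Everything else is a routine bookkeeping of the two vanishing principles $\Ext_R^n(T,N)=0$ for $\Q$-torsion $T$ and $\Ext_R^n(P,N)=0$ for the $\tau_q$-projective module $P$, both valid for $n\geq 1$.
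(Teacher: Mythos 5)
Your proposal is correct and takes essentially the same route as the paper: the paper's proof notes that $A$ is $\tau_q$-isomorphic to $\Im(f)$ and that $\Cok(f)$ is $\tau_q$-isomorphic to $M$, then applies the long exact sequence of $\Ext_R(-,N)$ to the genuine short exact sequence $0\rightarrow \Im(f)\rightarrow P\rightarrow \Cok(f)\rightarrow 0$, which, once unwound, is precisely your four-sequence decomposition together with the same two vanishing principles ($\Ext_R^{\geq 1}$ of $\Q$-torsion modules and of the $\tau_q$-projective $P$ against $N$, plus $\Hom_R(T,N)=0$ for $\Q$-torsion $T$). The only cosmetic difference is that your central sequence passes through $\Ker(g)$ and $\Im(g)$ rather than $\Im(f)$ and $\Cok(f)$.
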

\begin{proof}  Note that  $A$ is $\tau_q$-isomorphic to $\Im(f)$ and  $\Cok(f)$ is $\tau_q$-isomorphic to $M$. The result can easily proved from the long exact sequence induced by  the short exact seqeunce $0\rightarrow \Im(f)\rightarrow P\rightarrow \Cok(f)\rightarrow0$.
\end{proof}

\begin{proposition}\label{w-phi-flat d}
Let $R$ be a ring. The following statements are equivalent for an $R$-module $M$.
\begin{enumerate}
    \item  $\tau_q$-\pd$_R(M)\leq n$.
    \item $\Ext_R^{n+k}(M, N)=0$ for every strongly Lucas $R$-module $N$ and every integer $k\geq 1$.
    \item $\Ext_R^{n+1}(M, N)=0$ for every strongly Lucas $R$-module $N$.
    \item If $0 \rightarrow P_n \rightarrow \cdots \rightarrow P_1\rightarrow P_0\rightarrow M\rightarrow 0$ is an $($a $\tau_q$-$)$exact sequence, where $P_0, P_1,\dots, P_{n-1}$ are $\tau_q$-projective $R$-modules, then $P_n$ is also  $\tau_q$-projective.
    \item If $0 \rightarrow P_n \rightarrow \cdots \rightarrow P_1\rightarrow P_0\rightarrow M\rightarrow 0$ is an $($a $\tau_q$-$)$exact sequence, where $P_0, P_1,\dots, P_{n-1}$ are projective $R$-modules, then $P_n$ is $\tau_q$-projective.
 \item There exists $0 \rightarrow P_n \rightarrow \cdots \rightarrow P_1\rightarrow P_0\rightarrow M\rightarrow 0$ is a $\tau_q$-exact sequence, where $P_0, P_1,\dots, P_{n}$ are all $\tau_q$-projective.
\end{enumerate}
\end{proposition}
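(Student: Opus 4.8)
The plan is to establish the six statements through two cyclic chains that meet at (3), using the dimension-shifting Lemma \ref{s-p-ex} as the main engine. First I would close the core cycle $(1)\Rightarrow(2)\Rightarrow(3)\Rightarrow(1)$. For $(1)\Rightarrow(2)$, take the genuine $\tau_q$-projective resolution $(\diamondsuit)$ of length $n$, break it into genuine short exact sequences $0\to K_i\to P_i\to K_{i-1}\to 0$ with $K_{-1}=M$ and $K_{n-1}=P_n$, and shift: since every $P_i$ is $\tau_q$-projective we have $\Ext_R^{j}(P_i,N)=0$ for each strongly Lucas $N$ and $j\geq 1$, so the $\Ext$ long exact sequences give $\Ext_R^{n+k}(M,N)\cong\Ext_R^{k}(P_n,N)=0$ for all $k\geq 1$. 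The step $(2)\Rightarrow(3)$ is the case $k=1$. For $(3)\Rightarrow(1)$, truncate a genuine projective resolution of $M$ at stage $n$ to get a genuine exact sequence $0\to K\to P_{n-1}\to\cdots\to P_0\to M\to 0$ with the $P_i$ projective; the same shift yields $\Ext_R^1(K,N)\cong\Ext_R^{n+1}(M,N)=0$, so $K$ is $\tau_q$-projective and this sequence is a genuine $\tau_q$-projective resolution of length $n$.

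The outer cycle $(3)\Rightarrow(4)\Rightarrow(5)\Rightarrow(6)\Rightarrow(3)$ goes as follows. For $(3)\Rightarrow(4)$, apply Lemma \ref{s-p-ex} repeatedly to the given (possibly only $\tau_q$-)exact sequence whose terms $P_0,\dots,P_{n-1}$ are $\tau_q$-projective, obtaining $\Ext_R^{n+1}(M,N)\cong\Ext_R^{n}(K_0,N)\cong\cdots\cong\Ext_R^1(P_n,N)$; hypothesis (3) forces $\Ext_R^1(P_n,N)=0$ for every strongly Lucas $N$, i.e. $P_n$ is $\tau_q$-projective. The implication $(4)\Rightarrow(5)$ is immediate, since projective modules are $\tau_q$-projective, so every sequence admissible for (5) is admissible for (4). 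For $(5)\Rightarrow(6)$, truncate a genuine projective resolution of $M$ as above; (5) says the $n$-th syzygy is $\tau_q$-projective, and the resulting genuinely exact (hence $\tau_q$-exact) sequence has all terms $\tau_q$-projective, which is exactly (6). Finally $(6)\Rightarrow(3)$ is one more application of Lemma \ref{s-p-ex} along the $\tau_q$-exact resolution supplied by (6), giving $\Ext_R^{n+1}(M,N)\cong\Ext_R^1(P_n,N)=0$.

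The main obstacle is iterating Lemma \ref{s-p-ex} along a sequence that is only $\tau_q$-exact, as permitted in (4), (5) and (6): there the intermediate syzygies are not literal kernels, so the naive splitting into short exact sequences fails. To handle this I would, at each spot, pass to the genuine image $B_i=\Im(d_i)$ and record that the homology $\Ker(d_{i-1})/B_i$ is $\Q$-torsion, so that the connecting maps between successive genuine short exact sequences are $\tau_q$-isomorphisms onto the relevant submodules. Lemma \ref{s-p-ex} applies verbatim to each short $\tau_q$-exact piece with $\tau_q$-projective middle term, while Proposition \ref{T-L-proj}(3) ensures that replacing a module by a $\tau_q$-isomorphic one preserves $\tau_q$-projectivity and does not affect the relevant $\Ext$-groups. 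With this bookkeeping in place the dimension shifts chain through unchanged and both cycles close.
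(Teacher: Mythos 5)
Your proof is correct and follows essentially the same route as the paper: both rest on the dimension-shifting Lemma \ref{s-p-ex}, truncation of genuine projective resolutions, and the passage to images with $\Q$-torsion corrections (via Proposition \ref{T-L-proj}(3)) to handle sequences that are only $\tau_q$-exact. The only difference is organizational --- you close two cycles meeting at (3), while the paper runs the single cycle $(1)\Rightarrow(6)\Rightarrow(2)\Rightarrow(3)\Rightarrow(4)\Rightarrow(5)\Rightarrow(1)$ --- and your explicit bookkeeping for the $\tau_q$-exact case is in fact more careful than the paper's terse invocation of the lemma.
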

\begin{proof}

$(1)\Rightarrow (6)$, $(2)\Rightarrow (3)$ and $(4)\Rightarrow (5)$ Trivial.

$(6)\Rightarrow (2)$ Suppose  exists a $\tau_q$-exact sequence $0\rightarrow P_n\rightarrow \cdots\rightarrow P_1\rightarrow P_0\rightarrow M \rightarrow 0$ where each $P_i$ is $\tau_q$-projective. Let $N$ be a strongly Lucas $R$-module  and $k$ a  positive integer. Then $\Ext_R^{n+k}(M, N)\cong \Ext_R^{k}(P_n, N)=0$ by Lemma \ref{s-p-ex}.

$(3)\Rightarrow (4)$ Let $0 \rightarrow P_n \rightarrow \cdots \rightarrow P_1\rightarrow P_0\rightarrow M\rightarrow 0$ is $\tau_q$-exact sequence, where $P_0, P_1,\dots, P_{n-1}$ are $\tau_q$-projective $R$-modules. Let  $N$ be a strongly Lucas $R$-module.    Then $\Ext_R^{k}(P_n, N)\cong\Ext_R^{n+1}(M, N) =0$ by Lemma \ref{s-p-ex}. So $P_n$ is $\tau_q$-projective.

$(5)\Rightarrow (1)$ Let  $0 \rightarrow P_n \rightarrow \cdots \rightarrow P_1\rightarrow P_0\rightarrow M\rightarrow 0$ be an exact sequence with  $P_0, P_1,\dots, P_{n-1}$ projective. Then $P_n$ is $\tau_q$-projective by $(5)$ implying $\tau_q$-\pd$_R(M)\leq n$.
\end{proof}

The proof of the following result is classical, so we omit it.

\begin{proposition}
Let $R$ be a ring and $0\rightarrow A\rightarrow B\rightarrow C\rightarrow 0$ be an exact sequence of $R$-modules. Then the following statements hold.
\begin{enumerate}
    \item $\tau_q$-\pd$_R(C)\leq 1+\max\{\tau_q$-\pd$_R(A),\tau_q$-\pd$_R(B)\}$.
    \item if $\tau_q$-\pd$_R(B)<\tau_q$-\pd$_R(C)$, then $\tau_q$-\pd$_R(A)= \tau_q$-\pd$_R(C)-1\geq\tau_q$-\pd$_R(B)$.
\end{enumerate}
\end{proposition}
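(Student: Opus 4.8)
The plan is to reduce everything to the cohomological characterization of $\tau_q$-projective dimension established in Proposition \ref{w-phi-flat d}: for an $R$-module $M$ one has $\tau_q$-\pd$_R(M)\leq n$ precisely when $\Ext_R^{n+1}(M,N)=0$ for every strongly Lucas module $N$, and in that case automatically $\Ext_R^{n+k}(M,N)=0$ for all $k\geq 1$. Equivalently, when the dimension is finite, $\tau_q$-\pd$_R(M)$ is the largest $k\geq 1$ with $\Ext_R^k(M,N)\neq 0$ for some strongly Lucas $N$ (and $0$ if no such $k$ exists). The only structural input is that, for each fixed strongly Lucas test module $N$, the usual long exact sequence in $\Ext_R^\bullet(-,N)$ attached to $0\to A\to B\to C\to 0$ is available; after that the argument is classical dimension shifting, the one point to keep in mind being that the test modules $N$ must range over strongly Lucas modules and not over all modules.

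For (1), I would set $m=\max\{\tau_q$-\pd$_R(A),\tau_q$-\pd$_R(B)\}$, assuming $m<\infty$ (otherwise there is nothing to prove), and fix a strongly Lucas module $N$. From the segment
$$\Ext_R^{m+1}(A,N)\to\Ext_R^{m+2}(C,N)\to\Ext_R^{m+2}(B,N)$$
of the long exact sequence, the outer terms vanish by Proposition \ref{w-phi-flat d}(2) (applied to $A$ with $k=1$, and to $B$ with $k=2$), so $\Ext_R^{m+2}(C,N)=0$. As $N$ was arbitrary, Proposition \ref{w-phi-flat d}(3) yields $\tau_q$-\pd$_R(C)\leq m+1$.

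For (2), write $b=\tau_q$-\pd$_R(B)$ and $c=\tau_q$-\pd$_R(C)$ with $b<c$; the stated inequality $c-1\geq b$ is then just the hypothesis, and the real content is $\tau_q$-\pd$_R(A)=c-1$. The key observation is that for every strongly Lucas $N$ and every $k\geq b+1$ the long exact sequence gives an isomorphism $\Ext_R^k(A,N)\cong\Ext_R^{k+1}(C,N)$, because both $\Ext_R^k(B,N)$ and $\Ext_R^{k+1}(B,N)$ vanish once $k\geq b+1$. When $c$ is finite this pins down $\tau_q$-\pd$_R(A)$ on both sides: taking $k\geq c$ gives $\Ext_R^k(A,N)\cong\Ext_R^{k+1}(C,N)=0$, whence $\tau_q$-\pd$_R(A)\leq c-1$; and, provided $c\geq b+2$, choosing a strongly Lucas $N$ with $\Ext_R^c(C,N)\neq 0$ forces $\Ext_R^{c-1}(A,N)\neq 0$ through the isomorphism at $k=c-1$, whence $\tau_q$-\pd$_R(A)\geq c-1$. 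When $c=\infty$ the same isomorphism transports nonvanishing of $\Ext_R^{k+1}(C,N)$ in arbitrarily high degrees back to $A$, so $\tau_q$-\pd$_R(A)=\infty$ as well.

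The one place needing a little care — and the main (very minor) obstacle — is the lower bound in the boundary case $c=b+1$, where the clean isomorphism is unavailable in degree $k=b$. There I would instead use the segment
$$\Ext_R^b(A,N)\to\Ext_R^{b+1}(C,N)\to\Ext_R^{b+1}(B,N)=0,$$
whose right-hand term vanishes because $b=\tau_q$-\pd$_R(B)$; choosing $N$ with $\Ext_R^{b+1}(C,N)=\Ext_R^c(C,N)\neq 0$ makes the connecting map onto a nonzero group surjective, so $\Ext_R^b(A,N)\neq 0$ and $\tau_q$-\pd$_R(A)\geq b=c-1$. Combined with $\tau_q$-\pd$_R(A)\leq c-1$ from the previous paragraph, this gives the desired equality. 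Apart from this bookkeeping there is no genuine difficulty.
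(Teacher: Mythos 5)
Your proof is correct: it is precisely the classical dimension-shifting argument — the long exact sequence of $\Ext_R^\bullet(-,N)$ against strongly Lucas test modules, combined with the characterization of $\tau_q$-projective dimension in Proposition \ref{w-phi-flat d} — which is exactly what the paper intends when it omits the proof as ``classical.'' Your extra care in the boundary case $\tau_q$-$\pd_R(C)=\tau_q$-$\pd_R(B)+1$ (using surjectivity of the connecting map instead of the isomorphism) is sound and fills in the only step that needs any attention.
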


\begin{proposition}\label{T-L-1}
Let $R$ be a ring and $M$ an $R$-module. Then the following statements hold.
\begin{enumerate}
    \item $\tau_q$-\pd$_R(M) \geq$ \pd$_{\T(R[x])}(M\otimes_R\T(R[x]))$.
    \item $\tau_q$-\pd$_R(M) \geq$ \pd$_{R_\p}(M_\p)$ for every non-semiregular prime ideal $\p$ of $R$.
\end{enumerate}
\end{proposition}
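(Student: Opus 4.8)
The plan is to treat both inequalities by a single device: transport a $\tau_q$-projective resolution of $M$ along a flat base change, and then invoke Proposition \ref{T-L}, which says that a $\tau_q$-projective module becomes genuinely projective (indeed free, locally) after that base change. We may assume $\tau_q$-\pd$_R(M)=n<\infty$, as otherwise both inequalities hold trivially. Fix an exact sequence
$$0\rightarrow P_n\rightarrow \cdots\rightarrow P_1\rightarrow P_0\rightarrow M\rightarrow 0$$
with every $P_i$ a $\tau_q$-projective $R$-module, realizing $\tau_q$-\pd$_R(M)\leq n$.

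For (1), the point I would establish first is that $\T(R[x])$ is flat as an $R$-module: it is the localization of $R[x]$ at its multiplicative set of regular elements, hence flat over $R[x]$, while $R[x]$ is free, hence flat, over $R$; transitivity of flatness gives the claim. (This is the same property underlying the change-of-rings isomorphism used in the proof of Proposition \ref{T-L}.) Consequently $-\otimes_R\T(R[x])$ is exact, and applying it to the resolution above produces an exact sequence of $\T(R[x])$-modules
$$0\rightarrow P_n\otimes_R\T(R[x])\rightarrow \cdots\rightarrow P_0\otimes_R\T(R[x])\rightarrow M\otimes_R\T(R[x])\rightarrow 0.$$
By Proposition \ref{T-L}(1) each $P_i\otimes_R\T(R[x])$ is a projective $\T(R[x])$-module, so this is a projective resolution of $M\otimes_R\T(R[x])$ of length at most $n$. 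Hence $\pd_{\T(R[x])}(M\otimes_R\T(R[x]))\leq n=\tau_q$-\pd$_R(M)$.

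For (2), I would run the identical argument with the exact functor $(-)_\p=-\otimes_R R_\p$ (localization being flat) in place of $-\otimes_R\T(R[x])$. Localizing the resolution at a non-semiregular prime $\p$ yields an exact sequence of $R_\p$-modules in which each $(P_i)_\p$ is free over $R_\p$ by Proposition \ref{T-L}(2); this is a free resolution of $M_\p$ of length at most $n$, so $\pd_{R_\p}(M_\p)\leq n$.

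The whole argument is formal once Proposition \ref{T-L} is available, so the only substantive step is the exactness of the base-change functors, i.e., the flatness of $\T(R[x])$ over $R$; I expect this to be the main (and essentially only) thing requiring justification. A minor point worth checking is the definition of a $\tau_q$-projective resolution: since $(\diamondsuit)$ is a genuinely exact sequence, flat base change preserves its exactness verbatim and no $\Q$-torsion correction is needed. Had one worked instead with merely $\tau_q$-exact resolutions, the kernels and cokernels would be $\Q$-torsion and would still vanish under $-\otimes_R\T(R[x])$, so the conclusion would persist in any case.
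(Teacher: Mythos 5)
Your proposal is correct and follows essentially the same route as the paper: take a $\tau_q$-projective resolution of $M$, apply the base-change functor ($-\otimes_R\T(R[x])$ for (1), localization at $\p$ for (2)), and invoke Proposition \ref{T-L} to see that the resulting exact sequence is a projective (resp.\ free) resolution of length at most $n$. The only difference is that you explicitly justify the exactness of the base change via flatness of $\T(R[x])$ over $R$, a point the paper leaves implicit.
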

\begin{proof}
(1) Suppose $\tau_q$-\pd$_R(M)\leq n$. Then there exists an exact sequence $0\rightarrow P_n\rightarrow \cdots\rightarrow P_1\rightarrow P_0\rightarrow M \rightarrow 0$
where each $P_i$ is $\tau_q$-projective.  By applying $-\otimes_R\T(R[x])$, we have  $0\rightarrow P_n\otimes_R\T(R[x])\rightarrow \cdots\rightarrow P_1\otimes_R\T(R[x])\rightarrow P_0\otimes_R\T(R[x])\rightarrow M\otimes_R\T(R[x]) \rightarrow 0$ is a projective resolution of $(M\otimes_R\T(R[x]))$
 by Proposition \ref{T-L}. So $\pd_{\T(R[x])}(M\otimes_R\T(R[x]))\leq n$.

(2) It is similar with $(1)$.
\end{proof}

The characterization of a commutative ring $R$ with  its total ring of quotients $\T(R)$ Noetherian showed in \cite{B18} can be restated as follows:

\begin{lemma}\label{t-Noe}\cite[Theorem 1.1]{B18} Let $R$ be a commutative ring. Then the following statements are equivalent.
\begin{enumerate}
 \item The total ring of quotients $\T(R)$ of $R$ is a  Noetherian ring.
 \item \begin{enumerate}
  \item every element $s+\Nil(R)$ with $s$ a non-zero-divisor in $R$ is a non-zero-divisor in $R/\Nil(R)$;
   \item the total ring of quotients $\T(R/\Nil(R))$ of $R/\Nil(R)$ is a  Noetherian ring;
    \item $\Nil(R)$ is nilpotent;
     \item each $\T(R/\Nil(R))$-module $\T(R/\Nil(R))\otimes_R(\Nil(R))^i/(\Nil(R))^{i+1}$ is finitely generated.
 \end{enumerate}
\end{enumerate}
\end{lemma}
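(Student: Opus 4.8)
The plan is to deduce the statement from the classical filtration characterization of Noetherian rings applied to the total ring of quotients $A:=\T(R)$. Recall that a commutative ring $A$ is Noetherian if and only if (i) $A/\Nil(A)$ is Noetherian, (ii) $\Nil(A)$ is nilpotent, and (iii) each quotient $\Nil(A)^i/\Nil(A)^{i+1}$ is a finitely generated $A/\Nil(A)$-module; indeed, when these hold the finite filtration $A\supseteq \Nil(A)\supseteq \Nil(A)^2\supseteq\cdots$ has Noetherian successive quotients, so $A$ is a Noetherian $A$-module, and the converse is standard. Thus it suffices to translate (i)--(iii) into conditions on $R$ through the identity $A=S^{-1}R$, where $S$ denotes the set of non-zero-divisors of $R$.

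First I would record the behaviour of the nilradical under this localization. Since $\Nil$ commutes with localization, $\Nil(A)=S^{-1}\Nil(R)$, and more generally $\Nil(A)^i=S^{-1}(\Nil(R)^i)$ for every $i$. Because every element of $S$ is regular, $S^{-1}(\Nil(R)^n)=0$ forces $\Nil(R)^n=0$, and conversely; hence (ii) is equivalent to condition (c). Localizing the short exact sequences $0\to \Nil(R)^{i+1}\to \Nil(R)^i\to \Nil(R)^i/\Nil(R)^{i+1}\to 0$ then gives $\Nil(A)^i/\Nil(A)^{i+1}\cong S^{-1}\big(\Nil(R)^i/\Nil(R)^{i+1}\big)$ as modules over $A/\Nil(A)=S^{-1}(R/\Nil(R))$.

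The heart of the argument is the identification $A/\Nil(A)\cong \T(R/\Nil(R))$, and this is exactly where condition (a) enters. Condition (a) guarantees that passing to $R/\Nil(R)$ sends non-zero-divisors of $R$ to non-zero-divisors, so that the canonical map $R/\Nil(R)\to A/\Nil(A)=S^{-1}(R/\Nil(R))$ is injective and realizes $A/\Nil(A)$ as a localization of $R/\Nil(R)$ at a multiplicative set of regular elements; the key point to verify is that this set already inverts \emph{every} non-zero-divisor of $R/\Nil(R)$, so that $A/\Nil(A)$ is the full total ring of quotients $\T(R/\Nil(R))$. Granting this identification, (i) becomes precisely condition (b), and, transporting the isomorphism of the previous paragraph through $A/\Nil(A)\cong\T(R/\Nil(R))$, the $A/\Nil(A)$-module $\Nil(A)^i/\Nil(A)^{i+1}$ becomes $\T(R/\Nil(R))\otimes_R \big(\Nil(R)^i/\Nil(R)^{i+1}\big)$, so (iii) becomes precisely condition (d). Combining the three translations with the classical lemma then yields the equivalence.

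I expect the identification $A/\Nil(A)\cong \T(R/\Nil(R))$ to be the main obstacle: showing that localizing $R/\Nil(R)$ at the images of the non-zero-divisors of $R$ already produces its total ring of quotients requires controlling how regularity interacts with the nilradical, and this is precisely the role played by the compatibility condition (a). Once this comparison of total quotient rings is in hand, the remaining steps are the routine localization and filtration bookkeeping indicated above.
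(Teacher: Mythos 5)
The paper offers no proof of this lemma at all --- it is imported verbatim as \cite[Theorem 1.1]{B18} --- so your proposal has to stand on its own, and it does not: the step you yourself flag as ``the main obstacle,'' namely the identification $\T(R)/\Nil(\T(R))\cong\T(R/\Nil(R))$, is false, and condition (a) cannot supply it, because (a) is vacuous for commutative rings (if $s$ is a non-zero-divisor and $(st)^n=0$, then $s^nt^n=0$, so $t^n=0$; thus the image of a non-zero-divisor is automatically a non-zero-divisor modulo $\Nil(R)$). Concretely, let $R=k[x,y]/(y^2,xy)$. Then $\Nil(R)=(y)$, $R/\Nil(R)\cong k[x]$, and the zero-divisors of $R$ are exactly the elements of $(x,y)$ (each of them kills $y$), so $\T(R)=R_{(x,y)}$ and $\T(R)/\Nil(\T(R))\cong k[x]_{(x)}$, whereas $\T(R/\Nil(R))=k(x)$; these are not isomorphic, even though in this example (1) and (2)(a)--(d) all hold. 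In general, writing $S$ for the non-zero-divisors of $R$ and $\bar S$ for its image in $R/\Nil(R)$, all one has is $\T(R)/\Nil(\T(R))=\bar S^{-1}(R/\Nil(R))$, of which $\T(R/\Nil(R))$ is a \emph{further} localization, since $\bar S$ is in general strictly smaller than the set of all non-zero-divisors of $R/\Nil(R)$. This one-way comparison does salvage $(1)\Rightarrow(2)$ (Noetherianness passes to localizations, finite generation passes to base change), but it breaks your argument for the substantive direction $(2)\Rightarrow(1)$: hypotheses (b) and (d) are finiteness conditions on the big ring $\T(R/\Nil(R))$, while the filtration argument needs them for the smaller ring $\bar S^{-1}(R/\Nil(R))$ and for the modules $\bar S^{-1}\bigl(\Nil(R)^i/\Nil(R)^{i+1}\bigr)$, and your outline contains no mechanism for descending them.

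This gap is not a repairable technicality either: with (b) and (d) read literally as printed, the implication $(2)\Rightarrow(1)$ is actually false, so no argument can close it. Take $\bar R=k[x_1,x_2,\dots]$, $M=\bigoplus_{i\geq 1}\bar R/(x_i)$, and let $R=\bar R(+)M$ be the idealization. Then $\Nil(R)=0(+)M$ has square zero, giving (c); $R/\Nil(R)\cong\bar R$ is a domain whose total quotient ring is the field $k(x_1,x_2,\dots)$, giving (b); and $\T(R/\Nil(R))\otimes_R\Nil(R)/\Nil(R)^2\cong k(x_1,x_2,\dots)\otimes_{\bar R}M=0$ because every $x_i$ becomes invertible, giving (d). Yet the non-zero-divisors of $R$ are exactly the pairs $(r,m)$ with $r\notin\bigcup_i(x_i)$, so $\Nil(\T(R))\cong\bigoplus_i\bar S^{-1}\bigl(\bar R/(x_i)\bigr)$ is an infinite direct sum of nonzero ideals of $\T(R)$, and $\T(R)$ is not Noetherian. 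The moral is that the finiteness condition must be imposed on $\bar S^{-1}\bigl(\Nil(R)^i/\Nil(R)^{i+1}\bigr)$, i.e.\ after inverting the regular elements of $R$ itself (which is how the hypotheses of Bavula's theorem, stated there for largest left quotient rings, are to be understood), and this is precisely the distinction your proposed isomorphism erases. A correct proof must run the filtration argument entirely over $\bar S^{-1}(R/\Nil(R))$, not over $\T(R/\Nil(R))$.
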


It follows by \cite[Theorem 3.8]{ZDC20} that a ring $R$ is a $\tau_q$-Noetherian ring if and only if the total ring of quotients  $\T(R[x])$ of $R[x]$ is a Noetherian ring.
\begin{proposition} Let $R$ be a $\tau_q$-Noetherian ring. Then $\T(R)$  is a  Noetherian ring. Consequently, $R$ is an $\A$-ring.
\end{proposition}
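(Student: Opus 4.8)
The plan is to reduce everything to Lemma~\ref{t-Noe}. By \cite[Theorem 3.8]{ZDC20}, the hypothesis that $R$ is $\tau_q$-Noetherian is equivalent to $\T(R[x])$ being a Noetherian ring, so it suffices to prove the implication ``$\T(R[x])$ Noetherian $\Rightarrow$ $\T(R)$ Noetherian''; the last clause will then follow formally. The engine is to \emph{apply Lemma~\ref{t-Noe} to the ring $R[x]$} to extract conditions (a)--(d) for $R[x]$, and then to \emph{transfer each of them back to $R$} so that Lemma~\ref{t-Noe} applies to $R$ itself. The bridge between the two rings is the classical identity $\Nil(R[x])=\Nil(R)[x]$ (a polynomial is nilpotent iff all its coefficients are), giving the isomorphism $R[x]/\Nil(R[x])\cong \bar R[x]$ with $\bar R:=R/\Nil(R)$ reduced, together with $\Nil(R[x])^{i}=\Nil(R)^{i}[x]$ and hence $\Nil(R[x])^{i}/\Nil(R[x])^{i+1}\cong N_i[x]$, where $N_i:=\Nil(R)^{i}/\Nil(R)^{i+1}$ is viewed as an $\bar R$-module.

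The heart of the argument is the reduced case, arising from condition~(b), which for $R[x]$ says $\T(\bar R[x])$ is Noetherian. Here $\bar R[x]$ is reduced and $\Min(\bar R[x])$ is in natural bijection with $\Min(\bar R)$ via $\p\mapsto\p[x]$. I would first record that a reduced ring whose total quotient ring is Noetherian has only finitely many minimal primes (its total quotient ring is then reduced Noetherian, so has finitely many minimal primes, and these correspond bijectively to the minimal primes of the ring). Applying this to $\bar R[x]$ and transporting along the bijection gives $\Min(\bar R)=\{\p_1,\dots,\p_n\}$ finite. The standard structure theorem for reduced rings with finitely many minimal primes then yields $\T(\bar R)\cong\prod_{j=1}^{n}\mathrm{Frac}(\bar R/\p_j)$, a finite product of fields, which in particular is Noetherian---this is condition~(b) for $R$. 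The same description applied to $\bar R[x]$ gives $\T(\bar R[x])\cong\prod_{j=1}^{n}K_j(x)$ with $K_j:=\mathrm{Frac}(\bar R/\p_j)$, and I will use both below.

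The remaining conditions transfer cleanly. For (c), $\Nil(R[x])=\Nil(R)[x]$ nilpotent forces $\Nil(R)$ nilpotent. For (a), a nonzerodivisor $s\in R$ stays a nonzerodivisor in $R[x]$, and a constant is a nonzerodivisor in $\bar R[x]$ iff it is one in $\bar R$; feeding this through (a) for $R[x]$ gives (a) for $R$. For (d), the identification $\Nil(R[x])^{i}/\Nil(R[x])^{i+1}\cong N_i[x]$ turns the $\T(\bar R[x])$-module of (d) for $R[x]$ into $\T(\bar R[x])\otimes_{\T(\bar R)}\big(\T(\bar R)\otimes_{\bar R}N_i\big)$; writing $\T(\bar R)=\prod_j K_j$ and $\T(\bar R[x])=\prod_j K_j(x)$, finite generation over $\prod_j K_j(x)$ amounts to each component being finite dimensional over $K_j(x)$, equivalently the $K_j$-component of $\T(\bar R)\otimes_{\bar R}N_i$ being finite dimensional over $K_j$. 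This shows $\T(\bar R)\otimes_{R}N_i=\T(\bar R)\otimes_{\bar R}N_i$ is finitely generated over $\T(\bar R)$, i.e.\ condition~(d) for $R$. With (a)--(d) verified for $R$, Lemma~\ref{t-Noe} shows $\T(R)$ is Noetherian.

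Finally, a Noetherian ring has property $\A$, so $\T(R)$ is an $\A$-ring, and by \cite[Corollary 2.6]{H88} $R$ is an $\A$-ring as well. The main obstacle is the reduced core of the second paragraph: the passage from ``$\T(\bar R[x])$ Noetherian'' to the finiteness of $\Min(\bar R)$ together with the product-of-fields description of $\T(\bar R)$, on which both conditions (b) and (d) for $R$ rest.
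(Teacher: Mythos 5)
Your proof is correct, and its skeleton is the same as the paper's: both pass from ``$R$ is $\tau_q$-Noetherian'' to ``$\T(R[x])$ is Noetherian'' via \cite[Theorem 3.8]{ZDC20}, apply Lemma~\ref{t-Noe} to $R[x]$, transfer conditions (a)--(d) back to $R$ using $\Nil(R[x])=\Nil(R)[x]$, and finish with \cite[Corollary 2.6]{H88}. The divergence is in how the crucial condition (b) is settled. The paper observes that $\T((R/\Nil(R))[x])$ is a reduced Noetherian total quotient ring, hence semisimple, and then invokes the equivalences in (the proof of) Theorem~\ref{0-d} to conclude that $\T(R/\Nil(R))$ is semisimple, so Noetherian; this route ultimately rests on Bavula's criterion (Lemma~\ref{t-ss}). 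You instead argue directly: Noetherianity of $\T(\bar R[x])$ gives finitely many minimal primes of $\bar R[x]$, the bijection $\p\mapsto\p[x]$ transports this to $\Min(\bar R)$, and Matlis's structure theorem \cite[Propositions 1.1, 1.6]{M83} exhibits $\T(\bar R)$ as a finite product of fields. This is more elementary and self-contained (no semisimplicity, no Goldie/Bavula theory), at the cost of redoing a computation the paper can simply cite. A genuine bonus of your version is condition (d): the paper dismisses it as ``easily verified'' from $\Nil(R[x])=\Nil(R)[x]$ alone, but transferring finite generation from a $\T(\bar R[x])$-module to a $\T(\bar R)$-module does require relating the two rings, and your explicit identifications $\T(\bar R)\cong\prod_j K_j$, $\T(\bar R[x])\cong\prod_j K_j(x)$ together with the dimension count over each $K_j(x)$ supply exactly the missing detail.
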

\begin{proof} Suppose $R$ is a $\tau_q$-Noetherian ring. Then  $\T(R[x])$ is a Noetherian ring. So $R[x]$ satisfies the conditions $(a)$-$(d)$ in Lemma \ref{t-Noe}(2) by replacing $R$ to $R[x]$. To show $\T(R)$  is a  Noetherian ring, we just need to verify the conditions $(a)$-$(d)$ in Lemma \ref{t-Noe}(2). $(a),(c)$ and $(d)$ can easily be verified since $\Nil(R[x])=\Nil(R)[x]$ (see \cite[Exercise 1.47]{fk16}). As for $(b)$, since the total ring $\T(R[x]/\Nil(R[x]))
\cong \T((R/\Nil(R))[x])$ is a reduced Noetherian ring, then it is  semi-simple. So by the proof  Theorem \ref{0-d} $\T((R/\Nil(R)))$ is also a semi-simple, and so is Noetherian. It follows that  $\T(R)$  is a  Noetherian ring. Hence $\T(R)$ is an $\A$-ring,  and so $R$ is also an $\A$-ring by \cite[Corollary 2.6]{H88}
\end{proof}

\begin{corollary}\label{T-L-2}
Let $R$ be a $\tau_q$-Noetherian ring and $M$ a $\tau_q$-finitely generated $R$-module. Then
\begin{center}
$\tau_q$-\pd$_R(M) =$ \pd$_{\T(R[x])}(M\otimes_R\T(R[x])).$
\end{center}
\end{corollary}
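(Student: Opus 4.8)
The plan is to prove the reverse of the inequality already supplied by Proposition \ref{T-L-1}(1). Write $n=\pd_{\T(R[x])}(M\otimes_R\T(R[x]))$. If $n=\infty$ there is nothing to prove, since Proposition \ref{T-L-1}(1) then forces $\tau_q$-\pd$_R(M)=\infty$ as well; so assume $n<\infty$ and aim to show $\tau_q$-\pd$_R(M)\leq n$.

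First I would build a $\tau_q$-exact sequence $\cdots\to P_1\to P_0\to M\to 0$ in which every $P_i$ is a finitely generated free $R$-module and every syzygy is $\tau_q$-finitely generated. Starting from a $\tau_q$-epimorphism $P_0\to M$ (which exists because $M$ is $\tau_q$-finitely generated), set $K_0=\Ker(P_0\to M)$; choose a $\tau_q$-epimorphism $P_1\to K_0$, and iterate. The crucial point is that each syzygy stays $\tau_q$-finitely generated: since $R$ is $\tau_q$-Noetherian, $\T(R[x])$ is Noetherian, and because $\T(R[x])$ is flat over $R$ (it is a localization of the free $R$-module $R[x]$) while $-\otimes_R\T(R[x])$ kills $\Q$-torsion by \cite[Proposition 2.3]{ZQ23}, applying $-\otimes_R\T(R[x])$ to the partial sequence shows that $K_0\otimes_R\T(R[x])$ is a submodule of a finitely generated $\T(R[x])$-module, hence finitely generated; thus $K_0$ is $\tau_q$-finitely generated by \cite[Theorem 3.3]{ZQ23}, and the induction continues.

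Next I would tensor the whole $\tau_q$-exact sequence with $\T(R[x])$. Flatness turns $\tau_q$-exactness into genuine exactness (the homologies are $\Q$-torsion and therefore vanish after the base change), so $\cdots\to P_1\otimes_R\T(R[x])\to P_0\otimes_R\T(R[x])\to M\otimes_R\T(R[x])\to 0$ is an honest free resolution, and flatness also identifies its $n$-th syzygy with $K_{n-1}\otimes_R\T(R[x])$, where $K_{n-1}=\Ker(P_{n-1}\to P_{n-2})$. Because $\pd_{\T(R[x])}(M\otimes_R\T(R[x]))=n$, this $n$-th syzygy is a projective $\T(R[x])$-module. Now $K_{n-1}$ is $\tau_q$-finitely generated, $K_{n-1}\otimes_R\T(R[x])$ is projective, and $R$ is an $\A$-ring (since a $\tau_q$-Noetherian ring is an $\A$-ring, by the Proposition preceding this Corollary); hence Theorem \ref{T-L-tau} yields that $K_{n-1}$ is $\tau_q$-projective. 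The $\tau_q$-exact sequence $0\to K_{n-1}\to P_{n-1}\to\cdots\to P_0\to M\to 0$ then exhibits $\tau_q$-\pd$_R(M)\leq n$ via Proposition \ref{w-phi-flat d}, and combining with Proposition \ref{T-L-1}(1) gives the desired equality.

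I expect the main obstacle to be the bookkeeping in the middle steps: one must confirm both that the syzygies of a $\tau_q$-finitely generated module over a $\tau_q$-Noetherian ring remain $\tau_q$-finitely generated and that $-\otimes_R\T(R[x])$ carries the $\tau_q$-exact free complex to an honest free resolution whose syzygies are exactly the base changes $K_i\otimes_R\T(R[x])$. Both rest on the flatness of $\T(R[x])$ over $R$ together with the vanishing of $\Q$-torsion after the base change; once these are in place, Theorem \ref{T-L-tau} does the real work.
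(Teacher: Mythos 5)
Your proposal is correct and follows essentially the same route as the paper: build a length-$n$ $\tau_q$-exact resolution of $M$ by finitely generated projectives with a $\tau_q$-finitely generated last syzygy (using $\tau_q$-Noetherianity and the Noetherianity of $\T(R[x])$), base change along the flat map $R\to\T(R[x])$ to see that this syzygy becomes projective, invoke Theorem \ref{T-L-tau} to conclude it is $\tau_q$-projective, and finish with Proposition \ref{w-phi-flat d}. Your write-up merely supplies details the paper leaves implicit, notably the inductive verification that syzygies stay $\tau_q$-finitely generated and the explicit appeal to the fact that a $\tau_q$-Noetherian ring is an $\A$-ring, which is exactly what licenses the use of Theorem \ref{T-L-tau}.
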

\begin{proof} We just need to show $\tau_q$-\pd$_R(M) \leq$ \pd$_{\T(R[x])}(M\otimes_R\T(R[x]))$. Suppose \pd$_{\T(R[x])}(M\otimes_R\T(R[x]))=n$. Since $R$ is a $\tau_q$-Noetherian and $M$ a $\tau_q$-finitely generated $R$-module, there exists a $\tau_q$-exact sequence
$$0\rightarrow A\rightarrow P_{n-1}\rightarrow \cdots\rightarrow P_1\rightarrow P_0\rightarrow M \rightarrow 0$$ with each $P_i$ finitely generated projective and $A$ finitely generated.
 By applying $-\otimes_RT(R[x])$ to the above $\tau_q$-exact sequence, we have $A\otimes_RT(R[x])$ is projective over $T(R[x])$. Hence, by Theorem \ref{T-L-tau}, $A$ is $\tau_q$-projective. So $\tau_q$-\pd$_R(M) \leq n$ by Proposition \ref{w-phi-flat d}.
\end{proof}

We are ready to introduce the $\tau_q$-global dimension of a ring $R$ in terms of the supremum of  $\tau_q$-projective  dimensions of all $R$-modules.
\begin{definition}\label{w-phi-flat }
The $\tau_q$-global dimension of a ring $R$ is defined to be
\begin{center}
$\tau_q$-\gld$(R) = \sup\{\tau_q$-\pd$_R(M) \mid M $ is an $R$-module$\}.$
\end{center}
\end{definition}\label{def-wML}
Obviously, by definition, $\tau_q$-\gld$(R)\leq $\gld$(R)$ for all rings $R$.  Notice that if $R$ is a $\DQ$-ring, then $\tau_q$-\gld$(R)=$\gld$(R)$. The following result can be easily deduced by Proposition \ref{w-phi-flat d} and so we omit its proof.

\begin{theorem}\label{w-g-flat}
Let $R$ be a ring. The following statements are equivalent for $R$.
\begin{enumerate}
 \item $\tau_q$-\gld$(R)\leq  n$.
    \item $\tau_q$-\pd$_R(M)\leq n$ for every $R$-modules $M$.
     \item $\tau_q$-\pd$_R(M)\leq n$ for every finitely generated $R$-modules $M$.
    \item $\Ext_R^{n+k}(M, N)=0$ for  every $R$-modules $M$, every strongly Lucas $R$-module $N$ and every integer $k\geq 1$.

     \item  $\Ext_R^{n+1}(M, N)=0$ for  every $R$-modules $M$ and every strongly Lucas $R$-module $N$.
     \item  $\Ext_R^{n+1}(M, N)=0$ for  every finitely generated $R$-modules $M$ and every strongly Lucas $R$-module $N$.
    \item  $\Ext_R^{n+1}(R/I, N)=0$ for every strongly Lucas $R$-module $N$  and every ideal $I$ of $R$.

\end{enumerate}
Consequently, the $\tau_q$-global dimension of $R$ is determined by the
formulas:
\begin{align*}
\tau_q\mbox{-\gld}(R)=&\sup\{\tau_q\mbox{-\pd}_R(M)\mid M\ \mbox{is a finitely genrated}\ R\mbox{-module}\}\\
= & \sup\{\mbox{\id}_R(N)\mid N\ \mbox{is a strongly Lucas}\ R\mbox{-module}\}.\\
\end{align*}
\end{theorem}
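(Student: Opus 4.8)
The plan is to reduce the whole statement to two inputs: the per-module characterization already established in Proposition \ref{w-phi-flat d}, and the classical cyclic test for injective dimension, namely that for a module $N$ one has $\id_R(N)\leq n$ if and only if $\Ext_R^{n+1}(R/I,N)=0$ for every ideal $I$ of $R$. All of the genuinely $\tau_q$-specific content is packaged inside Proposition \ref{w-phi-flat d}; what remains is to globalize over all modules $M$ and to translate between the conditions phrased on the first variable $M$ and those phrased on the second variable $N$ (ranging over strongly Lucas modules). Throughout I would use that, for a \emph{fixed} module $M$, Proposition \ref{w-phi-flat d} renders the three statements ``$\tau_q$-\pd$_R(M)\leq n$'', ``$\Ext_R^{n+k}(M,N)=0$ for all strongly Lucas $N$ and all $k\geq 1$'', and ``$\Ext_R^{n+1}(M,N)=0$ for all strongly Lucas $N$'' interchangeable.

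First I would dispose of the formal equivalences. The equivalence $(1)\Leftrightarrow(2)$ is just the definition of $\tau_q$-\gld$(R)$ as a supremum, and $(2)\Rightarrow(3)$ is the trivial restriction to finitely generated modules. Applying Proposition \ref{w-phi-flat d} to each module $M$ converts $(2)$ into $(4)$ and into $(5)$ at once, so $(2)\Leftrightarrow(4)\Leftrightarrow(5)$; the implications $(4)\Rightarrow(5)\Rightarrow(6)\Rightarrow(7)$ are then immediate restrictions (first to $k=1$, then to finitely generated $M$, then to cyclic $M=R/I$). Likewise $(3)\Rightarrow(6)$ follows by applying the equivalence of items (1) and (3) of Proposition \ref{w-phi-flat d} to each finitely generated $M$. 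At this point every one of the seven conditions implies $(7)$ and is implied by $(5)$, so the entire cycle closes as soon as I prove $(7)\Rightarrow(5)$.

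The key step — and the only place where more than bookkeeping is required — is $(7)\Rightarrow(5)$, which I would derive from the cyclic test for injective dimension. Fix a strongly Lucas module $N$, take an injective resolution of $N$, and let $K$ denote its $n$-th cosyzygy. Dimension shifting gives $\Ext_R^{n+1}(R/I,N)\cong\Ext_R^{1}(R/I,K)$ for every ideal $I$. Hypothesis $(7)$ forces the left-hand side to vanish for all $I$, so $\Ext_R^{1}(R/I,K)=0$ for all $I$, whence $K$ is injective by Baer's criterion; thus $\id_R(N)\leq n$, and therefore $\Ext_R^{n+1}(M,N)=0$ for every module $M$. Since $N$ was an arbitrary strongly Lucas module, this is exactly $(5)$. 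I expect this Baer-criterion argument to be the main (indeed only) obstacle, but it is entirely classical and transfers verbatim, because the test is carried out one strongly Lucas module $N$ at a time against the fixed class of cyclic modules $R/I$, and the cosyzygy $K$ need not itself be strongly Lucas.

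Finally, the two displayed formulas fall out of the equivalences. The first, $\tau_q$-\gld$(R)=\sup\{\tau_q$-\pd$_R(M)\mid M$ finitely generated$\}$, is precisely $(1)\Leftrightarrow(3)$. For the second, the argument for $(7)\Rightarrow(5)$ shows that ``$\Ext_R^{n+1}(M,N)=0$ for every $M$ and every strongly Lucas $N$'' is equivalent to ``$\id_R(N)\leq n$ for every strongly Lucas $N$''; combining this with $(1)\Leftrightarrow(5)$ identifies $\tau_q$-\gld$(R)$ with $\sup\{\id_R(N)\mid N$ strongly Lucas$\}$, which is the remaining formula.
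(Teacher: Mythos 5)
Your proof is correct and follows exactly the route the paper intends: the paper omits its own proof, saying only that the theorem ``can be easily deduced by Proposition \ref{w-phi-flat d}'', and your argument is precisely that deduction, correctly supplying the one non-formal ingredient needed to close the cycle at $(7)\Rightarrow(5)$ and to justify the $\sup\{\id_R(N)\}$ formula, namely the classical cosyzygy/Baer-criterion test for injective dimension (with the right observation that the cosyzygy need not be strongly Lucas, since the test runs over all cyclic modules $R/I$). Nothing is missing.
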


\begin{proposition}
Let $R$ be a ring with $\tau_q$-\gld$(R)\leq 1$. Then $R$ is a reduced ring.
\end{proposition}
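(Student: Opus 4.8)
The plan is to show the contrapositive: if $R$ is not reduced, then $\tau_q$-\gld$(R) \geq 2$. So suppose $R$ has a nonzero nilpotent element, i.e. $\Nil(R) \neq 0$, and I would aim to exhibit an ideal $I$ and a strongly Lucas module $N$ with $\Ext_R^2(R/I, N) \neq 0$, which by Theorem \ref{w-g-flat}(7) forces $\tau_q$-\gld$(R) > 1$.

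The natural first step is to localize the problem at a maximal $q$-ideal. Pick a nonzero nilpotent $r \in R$; since nil elements lie in every prime, and in particular in every maximal $q$-ideal, there is some $\m \in q$-\Max$(R)$ with $r \in \m$ and $r \neq 0$ in $R_\m$ (after possibly replacing $r$ by a suitable element, using that $\Nil(R)$ is not $\Q$-torsion — indeed a nonzero nil ideal cannot be $\Q$-torsion-free, and Proposition \ref{nil} already shows such an ideal is not $\tau_q$-projective). The cleanest route is to work with the principal ideal generated by a nilpotent and analyze $\Ext^2$ directly over the localization $R_\m$, then transfer back. Concretely, over the local ring $R_\m$ one has a nonzero nilpotent $\bar r$, and the goal reduces to finding an $R_\m$-module $N$ and an ideal $I$ with $\Ext^2_{R_\m}(R_\m/I, N) \neq 0$.

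The key mechanism I expect to use is the failure of $\pd$ to stay bounded by $1$ in the presence of nilpotents. If $r$ is nilpotent of index $n \geq 2$, then $\Ann(r) \neq 0$ and $\Ann(r) \supseteq Rr^{n-1}$, so the ideal $Rr$ does not have a length-one free resolution; chasing the resolution $\cdots \to R \xrightarrow{r} R \xrightarrow{r^{n-1}} R \xrightarrow{r} R \to R/Rr \to 0$ (or the two-step truncation thereof) produces a nonvanishing $\Ext^2_R(R/Rr, R)$, or against a suitable injective-type coefficient module. The link to \emph{strongly Lucas} modules is where the $q$-operation theory enters: by Proposition \ref{naga-lucas-p}, any $R_\m$-module is a strongly Lucas $R$-module when $\m$ is non-semiregular (every maximal $q$-ideal is non-semiregular), so it suffices to produce the $\Ext^2$-obstruction with coefficients in an $R_\m$-module, and this is automatically a valid strongly Lucas coefficient module for the criterion in Theorem \ref{w-g-flat}(7).

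The main obstacle I anticipate is the bookkeeping of passing between $\Ext$ computed over $R$ and over $R_\m$, and ensuring the chosen coefficient module $N$ is genuinely strongly Lucas while still detecting the second syzygy. The cleanest way around this is precisely Proposition \ref{naga-lucas-p}: choose $N$ to be an $R_\m$-module (e.g. $E_{R_\m}(R_\m/\m R_\m)$ or $R_\m$ itself, whichever makes the $\Ext^2$ nonvanishing transparent via the nilpotent), so that $N$ is strongly Lucas for free, and use the flatness of localization to identify $\Ext^{2}_R(R/I, N)$ with $\Ext^2_{R_\m}((R/I)_\m, N)$ for finitely presented $R/I$. Once the nilpotent produces a nonzero second $\Ext$ group there, Theorem \ref{w-g-flat}(7) immediately yields $\tau_q$-\gld$(R) \geq 2$, contradicting the hypothesis $\tau_q$-\gld$(R) \leq 1$, and hence $R$ must be reduced.
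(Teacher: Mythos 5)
Your overall strategy (contrapositive; localize at some $\m\in q$-$\Max(R)$ where the nilpotent survives; use Proposition \ref{naga-lucas-p} to make any $R_\m$-module an admissible strongly Lucas coefficient; conclude via Theorem \ref{w-g-flat}) is sound and completable, but the step where you actually produce a nonvanishing $\Ext^2$ — the heart of the proof — is wrong as written. First, the complex $\cdots \to R \xrightarrow{r} R \xrightarrow{r^{n-1}} R \xrightarrow{r} R \to R/Rr \to 0$ is not exact in general: exactness at the second spot requires $\Ann_R(r)=Rr^{n-1}$, which already fails for $R=k[x,y]/(x^2,xy)$, $r=x$. Second, both of your candidate coefficient modules fail: $E_{R_\m}(R_\m/\m R_\m)$ is injective, so $\Ext^i_{R_\m}(-,E_{R_\m}(R_\m/\m R_\m))=0$ for all $i\geq 1$ and it can never detect anything; and $N=R_\m$ fails whenever $R_\m$ is self-injective, e.g. $R_\m=k[x]/(x^2)$, where $\Ext^i_{R_\m}(R_\m/\bar{r}R_\m,R_\m)=0$ for all $i\geq 1$. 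The correct finish is to abandon the search for an explicit witness: over the local ring $R_\m$, the module $R_\m\bar{r}\cong R_\m/\Ann_{R_\m}(\bar{r})$ is nonzero, cyclic, and has nonzero annihilator (it contains $\bar{r}^{\,n-1}$), hence is not free, hence not projective; therefore $\pd_{R_\m}(R_\m/\bar{r}R_\m)\geq 2$, which by the definition of projective dimension means \emph{some} $R_\m$-module $N$ satisfies $\Ext^2_{R_\m}(R_\m/\bar{r}R_\m,N)\neq 0$. Any such $N$ is strongly Lucas over $R$ by Proposition \ref{naga-lucas-p}, and since $R_\m$ is flat over $R$ and $N$ is an $R_\m$-module, \cite[Chapter VI,Proposition 4.1.3]{CE56} gives $\Ext^2_R(R/Rr,N)\cong \Ext^2_{R_\m}(R_\m/\bar{r}R_\m,N)\neq 0$ for \emph{every} $R$-module, with no finite presentation hypothesis — this also disposes of your worry about localizing $\Ext$.

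Two further points. Your justification ``a nonzero nil ideal cannot be $\Q$-torsion-free'' is backwards: $\Nil(R)$ \emph{is} $\Q$-torsion-free, being a submodule of $R$ (which is $\Q$-torsion-free since dense ideals have zero annihilator), and that is exactly why, being nonzero, it cannot be $\Q$-torsion and hence survives localization at some maximal $q$-ideal. Finally, the paper's own proof is a one-liner that bypasses all of this: given $\tau_q$-\gld$(R)\leq 1$, Proposition \ref{w-phi-flat d}(5) applied to $0\to \Nil(R)\to R\to R/\Nil(R)\to 0$ shows $\Nil(R)$ is $\tau_q$-projective, and Proposition \ref{nil} — which you cite but never actually use — then forces $\Nil(R)=0$. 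Your local analysis is, in effect, a re-derivation of the proof of Proposition \ref{nil}; invoking it directly would have replaced the entire $\Ext^2$ apparatus.
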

\begin{proof}
The result follows by Proposition \ref{nil}.
\end{proof}

\begin{proposition}\label{T-L-3}
Let $R$ be a ring. Then the following statements hold.
\begin{enumerate}
    \item $\tau_q$-\gld$(R) \geq\gld$$(\T(R[x]))$.
    \item $\tau_q$-\gld$(R) \geq \sup\{$\gld$(R_\m)\mid \m\in q$-$\Max(R)\}$.
\end{enumerate}
\end{proposition}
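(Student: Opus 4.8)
The plan is to derive both inequalities by taking suprema in the per-module estimates of Proposition \ref{T-L-1}; the whole difficulty is to realize the target modules over $\T(R[x])$, resp. over $R_\m$, by base change from $R$. Throughout I would use the injective-dimension reformulation
$$\tau_q\text{-}\gld(R)=\sup\{\id_R(N)\mid N\text{ a strongly Lucas }R\text{-module}\}$$
from Theorem \ref{w-g-flat}, together with the standard identities $\gld(\T(R[x]))=\sup\{\id_{\T(R[x])}(N)\}$ and $\gld(R_\m)=\sup\{\id_{R_\m}(N)\}$.

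Part $(2)$ is the clean case. Every $\m\in q$-$\Max(R)$ is a non-semiregular prime, so Proposition \ref{T-L-1}(2) applies at $\m$. Given an $R_\m$-module $N$, view it as an $R$-module and put $M=N$; since $R\to R_\m$ is a localization, hence a ring epimorphism, the counit $M_\m=N\otimes_R R_\m\to N$ is an isomorphism, so $M_\m\cong N$ as $R_\m$-modules. Proposition \ref{T-L-1}(2) then gives $\pd_{R_\m}(N)=\pd_{R_\m}(M_\m)\le\tau_q\text{-}\pd_R(M)\le\tau_q\text{-}\gld(R)$. Taking the supremum over all $R_\m$-modules $N$ yields $\gld(R_\m)\le\tau_q\text{-}\gld(R)$, and then the supremum over $\m\in q$-$\Max(R)$ gives $(2)$.

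For $(1)$ the same device is unavailable, because $R\to\T(R[x])$ is \emph{not} a ring epimorphism ($x$ is a free variable): for a $\T(R[x])$-module $N$ the counit $N\otimes_R\T(R[x])\to N$ is only $R$-split, and $N$ need not be a $\T(R[x])$-summand of any $M\otimes_R\T(R[x])$. I would therefore argue with injective dimensions. Each $\T(R[x])$-module $N$ is strongly Lucas over $R$ by Proposition \ref{naga-lucas}, whence $\id_R(N)\le\tau_q\text{-}\gld(R)$, so $(1)$ reduces to the comparison
$$\id_{\T(R[x])}(N)\le\id_R(N)\ \ \text{for every }\T(R[x])\text{-module }N.\qquad(\ast)$$
I would prove $(\ast)$ by factoring $R\hookrightarrow R[x]\to\T(R[x])$. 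Since $\T(R[x])=W^{-1}R[x]$ is a flat localization of $R[x]$ at its set $W$ of regular elements, Baer's criterion over $\T(R[x])$ combined with the change-of-rings isomorphism \cite[Chapter VI, Proposition 4.1.3]{CE56} identifies $\Ext^{n}_{\T(R[x])}(\T(R[x])/\mathfrak a,N)$ with $\Ext^{n}_{R[x]}(R[x]/\mathfrak b,N)$, where $\mathfrak b=\mathfrak a\cap R[x]$, giving $\id_{\T(R[x])}(N)\le\id_{R[x]}(N)$. It then remains to show $\id_{R[x]}(N)\le\id_R(N)$, for which I would use the coinduced fundamental sequence $0\to N\to\Hom_R(R[x],N)\to\Hom_R(R[x],N)\to0$ of the free extension $R\hookrightarrow R[x]$: applying $\Hom_{R[x]}(C,-)$ and the Eckmann--Shapiro collapse $\Ext^{p}_{R[x]}(C,\Hom_R(R[x],N))\cong\Ext^{p}_R(C,N)$ exhibits $\Ext^{d+1}_{R[x]}(C,N)$, with $d=\id_R(N)$, as the cokernel of the multiplication-by-$x$-type operator induced by the connecting map on $\Ext^{d}_R(C,N)$.

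The main obstacle is precisely this last point: for an arbitrary free extension one only gets $\id_{R[x]}(N)\le\id_R(N)+1$, the familiar polynomial ``$+1$'', and removing it is exactly where the total-quotient structure is essential. Every regular element of $R[x]$, in particular $x$ itself, is a \emph{unit} in $\T(R[x])$ and hence acts invertibly on the $\T(R[x])$-module $N$ and on all the groups $\Ext^{d}_R(C,N)$. I expect the heart of the argument to be showing that this invertibility forces the relevant connecting operator to be surjective, so that $\Ext^{d+1}_{R[x]}(C,N)=0$ and the extra homological degree carried by $x$ collapses. Equivalently, $(\ast)$ is the converse of \cite[Exercise 3.15]{fk16} for $\T(R[x])$: an $R$-injective $\T(R[x])$-module is already $\T(R[x])$-injective.
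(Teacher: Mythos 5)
Your part (2) is correct and is exactly what the paper does: the paper's entire proof is the one line ``deduced by Proposition \ref{T-L-1} and Theorem \ref{w-g-flat}'', and for (2) this means precisely your argument --- every $R_\m$-module is its own localization, so Proposition \ref{T-L-1}(2) plus taking suprema gives the bound. Part (1), however, contains a genuine gap. Your reduction of (1) to the inequality $(\ast)$ $\id_{\T(R[x])}(N)\leq \id_R(N)$, and the first half of $(\ast)$ (descent from $R[x]$ to $\T(R[x])$ via Baer's criterion and flat base change, since every ideal of $\T(R[x])$ is extended from $R[x]$), are fine. But the second half, $\id_{R[x]}(N)\leq \id_R(N)$ for a $\T(R[x])$-module $N$, is never proved: you explicitly defer it (``I expect the heart of the argument to be \ldots''), and that deferred step is the entire content of (1). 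Worse, the mechanism you point to cannot suffice as stated. The connecting operator in your long exact sequence is the \emph{difference} of the two $x$-actions, the one through $N$ and the one through the test module $C$, and invertibility of regular elements acting through $N$ does not make such a difference surjective. Concretely, over $R=k$ a field the module $N=k[x,x^{-1}]$ has $x$ acting invertibly and $\id_k(N)=0$, yet $\id_{k[x]}(N)=1$ because $N$ is not divisible by $x-1$; so the polynomial ``$+1$'' survives invertibility of $x$ (indeed of all powers of $x$), and any proof of $(\ast)$ must use the full strength of the $\T(R[x])$-structure in a way your sketch does not identify.

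For comparison, the paper's route to (1) stays with projective dimensions: Proposition \ref{T-L-1}(1) gives $\pd_{\T(R[x])}(M\otimes_R\T(R[x]))\leq \tau_q$-\pd$_R(M)\leq\tau_q$-\gld$(R)$ for every $R$-module $M$, and the bridge from these extended modules to \emph{all} $\T(R[x])$-modules --- exactly the obstacle you correctly identified, since $R\to\T(R[x])$ is not an epimorphism --- is the decomposition used in the proof of Theorem \ref{0-d} $((2)\Rightarrow(3))$: for a $\T(R[x])$-module $N$ one has $N\otimes_R\T(R[x])\cong\bigoplus_i N$ as $\T(R[x])$-modules, whence $\pd_{\T(R[x])}(N)=\pd_{\T(R[x])}(N\otimes_R\T(R[x]))\leq\tau_q$-\gld$(R)$. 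Establishing that $N$ is a $\T(R[x])$-direct summand of $N\otimes_R\T(R[x])$ (or that direct-sum decomposition) is the single missing idea that would complete your part (1) along the paper's lines, and it is a statement about module structure, not about injective dimensions over $R[x]$.
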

\begin{proof}
It can be easily deduced by Proposition \ref{T-L-1} and Theorem \ref{w-g-flat}.
\end{proof}

The statement in Proposition \ref{T-L-2}(2) can be strict. Let $R$ be a von Neumann regular ring but not semisimple.  Then $R_\m$ is a field for $\m\in q$-$\Max(R)$, and so \gld$(R_\m)=0$. However, $\tau_q$-\gld$(R)>0$ (see Theorem \ref{0-d} below). However, the equality holds for $\tau_q$-Noetherian rings:

\begin{proposition}\label{N-qgld}
Let $R$ be a $\tau_q$-Noetherian ring. Then
\begin{center}
$\tau_q$-\gld$(R)=\sup\{$\gld$(R_{\m})\mid \m\in q$-$\Max(R)\}$.
\end{center}
\end{proposition}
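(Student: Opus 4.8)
The plan is to prove the two inequalities separately. The inequality $\tau_q$-\gld$(R)\geq\sup\{$\gld$(R_\m)\mid\m\in q$-$\Max(R)\}$ is already recorded in Proposition \ref{T-L-3}(2), so the entire task reduces to the reverse estimate. Set $n:=\sup\{$\gld$(R_\m)\mid\m\in q$-$\Max(R)\}$; if $n=\infty$ there is nothing to prove, so I assume $n<\infty$ and aim to show $\tau_q$-\gld$(R)\leq n$. By the equivalence $(1)\Leftrightarrow(3)$ of Theorem \ref{w-g-flat}, it suffices to bound $\tau_q$-\pd$_R(M)\leq n$ for every finitely generated $R$-module $M$.

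First I would produce a partial resolution adapted to the $\tau_q$-Noetherian hypothesis. Exactly as in the proof of Corollary \ref{T-L-2}, since $R$ is $\tau_q$-Noetherian and $M$ is finitely generated, one can build a $\tau_q$-exact sequence
$$0\rightarrow A\rightarrow P_{n-1}\rightarrow\cdots\rightarrow P_0\rightarrow M\rightarrow 0$$
in which each $P_i$ is finitely generated projective and $A$ is finitely generated, hence $\tau_q$-finitely generated. The whole argument is then organized around showing that this final term $A$ is $\tau_q$-projective, for then the implication $(6)\Rightarrow(1)$ of Proposition \ref{w-phi-flat d} yields $\tau_q$-\pd$_R(M)\leq n$ and we are done.

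The crux is to verify the local-freeness hypothesis of Theorem \ref{T-L-tau}(2) for $A$. Fix $\m\in q$-$\Max(R)$. Because the displayed sequence is $\tau_q$-exact, localizing at $\m$ produces an honest exact sequence of $R_\m$-modules
$$0\rightarrow A_\m\rightarrow (P_{n-1})_\m\rightarrow\cdots\rightarrow (P_0)_\m\rightarrow M_\m\rightarrow 0$$
whose middle terms are free $R_\m$-modules. Since \gld$(R_\m)\leq n$ forces \pd$_{R_\m}(M_\m)\leq n$, the standard syzygy argument shows that the $n$-th kernel $A_\m$ is a projective $R_\m$-module; being finitely generated over the local ring $R_\m$, it is in fact free. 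Thus $A_\m$ is free over $R_\m$ for every $\m\in q$-$\Max(R)$.

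Finally, a $\tau_q$-Noetherian ring is an $\A$-ring (shown just before Corollary \ref{T-L-2}), so all hypotheses of Theorem \ref{T-L-tau}(2) are met for the $\tau_q$-finitely generated module $A$, and it follows that $A$ is $\tau_q$-projective. Running back through Proposition \ref{w-phi-flat d} then gives $\tau_q$-\pd$_R(M)\leq n$ for every finitely generated $M$, hence $\tau_q$-\gld$(R)\leq n$ by Theorem \ref{w-g-flat}, and combining with Proposition \ref{T-L-3}(2) yields the asserted equality. I expect the only genuinely delicate point to be the first step, namely guaranteeing that the iterated syzygy $A$ may be chosen finitely generated; this is precisely where the $\tau_q$-Noetherian hypothesis is indispensable, since without it Theorem \ref{T-L-tau} could not be invoked. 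Every remaining step is either the localization of a $\tau_q$-exact sequence at a $q$-maximal ideal or the classical behavior of projective dimension over a local ring.
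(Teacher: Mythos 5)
Your proposal is correct and follows essentially the same route as the paper's own proof: the ``$\geq$'' direction via Proposition \ref{T-L-3}(2), then for ``$\leq$'' the construction of a $\tau_q$-exact partial resolution $0\rightarrow A\rightarrow P_{n-1}\rightarrow\cdots\rightarrow P_0\rightarrow M\rightarrow 0$ with finitely generated terms (using $\tau_q$-Noetherianness), localization at each $\m\in q$-$\Max(R)$ to see that $A_\m$ is free, and Theorem \ref{T-L-tau}(2) to conclude $A$ is $\tau_q$-projective. Your version is in fact slightly more careful than the paper's, since you explicitly verify the $\A$-ring hypothesis of Theorem \ref{T-L-tau} (via the proposition preceding Corollary \ref{T-L-2}), which the paper leaves implicit.
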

\begin{proof} The ``$\geq$'' follows by Proposition \ref{T-L-3}(2). Suppose   \gld$(R_{\m})\leq n$ for every $\m\in q$-$\Max(R)$. Let $M$ be a finitely generated $R$-module. Then there exists a $\tau_q$-exact sequence $0\rightarrow A\rightarrow P_{n-1}\rightarrow \cdots\rightarrow P_1\rightarrow P_0\rightarrow M \rightarrow 0$ where each $P_i$ is finitely generated $\tau_q$-projective and $A$ finitely generated. Then $A_{\m}$ is projective for every $\m\in q$-$\Max(R)$. Hence $A$ is $\tau_q$-projective by Theorem \ref{T-L-tau}.
\end{proof}

\begin{lemma}\label{q-poly} Let $M$ be an $R$-module and $\m\in q$-$\Max(R)$. Then $$M\otimes_RT(R[x])_{\m\otimes_R\T(R[x])}\cong M[x]_{\m[x]}.$$
\end{lemma}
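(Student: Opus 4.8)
The plan is to reduce the statement to a ring isomorphism and then read off the module statement by base change. Writing $M[x]\cong M\otimes_R R[x]$, the standard compatibility of tensor product with localization gives $M[x]_{\m[x]}\cong (M\otimes_R R[x])\otimes_{R[x]}R[x]_{\m[x]}\cong M\otimes_R R[x]_{\m[x]}$, while the left-hand side of the lemma is, reading $\m\otimes_R\T(R[x])$ as the extended ideal $\m\T(R[x])$, just $M\otimes_R\bigl(\T(R[x])_{\m\T(R[x])}\bigr)$. Thus it suffices to produce an $R[x]$-algebra isomorphism $R[x]_{\m[x]}\cong\T(R[x])_{\m\T(R[x])}$, after which applying $M\otimes_R-$ to both sides finishes the proof.

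To build this isomorphism I would first observe that $\m[x]=\m R[x]$ is a prime ideal of $R[x]$: since $\m\in q$-$\Max(R)\subseteq\Spec(R)$, the quotient $R[x]/\m R[x]\cong(R/\m)[x]$ is a domain. The substantive point is then to show that every non-zero-divisor of $R[x]$ avoids $\m[x]$. By McCoy's theorem a polynomial $f\in R[x]$ is a non-zero-divisor precisely when its content $c(f)$ (the ideal generated by its coefficients) is dense, i.e.\ $\Ann_R(c(f))=0$. If such an $f$ were to lie in $\m[x]$, then $c(f)\subseteq\m$ would be a finitely generated dense sub-ideal of $\m$, making $\m$ semiregular and contradicting the fact that every maximal $q$-ideal is non-semiregular. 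Hence the set $D$ of non-zero-divisors of $R[x]$ satisfies $D\subseteq R[x]\setminus\m[x]$, and in particular $\m[x]\cap D=\emptyset$.

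With $D\subseteq R[x]\setminus\m[x]$ the argument concludes by iterated localization: because the non-zero-divisors are already inverted, $R[x]_{\m[x]}$ is a localization of $\T(R[x])=D^{-1}R[x]$, and since $\m[x]$ is a prime disjoint from $D$ its extension is precisely the prime $\m\T(R[x])$ of $\T(R[x])$, with $(D^{-1}R[x])_{\m\T(R[x])}\cong R[x]_{\m[x]}$ by the usual ``localization of a localization'' identity. Tensoring this $R$-algebra isomorphism with $M$ over $R$ yields the desired $M\otimes_R\T(R[x])_{\m\T(R[x])}\cong M[x]_{\m[x]}$. I expect the genuine obstacle to be exactly the content computation in the previous paragraph—matching McCoy's characterization of non-zero-divisors in $R[x]$ with the non-semiregularity of maximal $q$-ideals—since everything surrounding it is formal bookkeeping with localizations and base change.
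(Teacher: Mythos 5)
Your proof is correct and takes essentially the same route as the paper's: the paper localizes $R[x]$ at the set $\Sigma(\tau_q)$ of polynomials with content in $\Q$, which by McCoy's theorem is exactly your set $D$ of non-zero-divisors of $R[x]$, and both arguments hinge on the same key observation that such polynomials avoid $\m[x]$ because a maximal $q$-ideal $\m$ is non-semiregular, followed by localization-of-localization and tensoring with $M$. The only difference is that you spell out the McCoy/content computation and the primality of $\m[x]$, which the paper compresses into ``it is easy to verify.''
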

\begin{proof}
 Let $\sum(\tau_q)$ be the set of all polynomials with contents in $\Q$. Then it is easy to verify $\sum(\tau_q)\subseteq R[X]-\m[X]$, so we have $$\T(R[x])_{\m\otimes_R\T(R[x])}\cong (R[x]_{\sum(\tau_q)})_{\m[x]_{\sum(\tau_q)}}\cong R[x]_{\m[x]}.$$
Hence $M\otimes_RT(R[x])_{\m\otimes_RT(R[x])}\cong M[x]_{\m[x]}.$
\end{proof}

The well-known Hilbert syzygy Theorem states that \gld$(R[x])=$ \gld$(R)+1$ for any ring $R$. Surprisingly, we have the following main result of this section.
\begin{theorem}\label{Hilb} $($\textbf{Hilbert syzygy Theorem for $\tau_q$-global dimensions}$)$ Let $R$ be a $\tau_q$-Noetherian ring. Then
\begin{center}
$\tau_q$-\gld$(R)=\tau_q$-\gld$(R[x])=$\gld$(\T(R[x]))$.
\end{center}
\end{theorem}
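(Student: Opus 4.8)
The plan is to prove the two equalities
$$\tau_q\mbox{-\gld}(R)=\gld(\T(R[x]))\quad\text{and}\quad \tau_q\mbox{-\gld}(R)=\tau_q\mbox{-\gld}(R[x])$$
separately, relying on the fact (established before Corollary \ref{T-L-2}) that a $\tau_q$-Noetherian ring $R$ satisfies: $\T(R[x])$ is Noetherian, $R$ is an $\A$-ring, and that Proposition \ref{N-qgld} computes $\tau_q$-\gld$(R)$ as $\sup\{\gld(R_\m)\mid \m\in q\mbox{-}\Max(R)\}$. Since $R$ is $\tau_q$-Noetherian, so is $R[x]$ (as $\T(R[x][y])\cong\T((R[x])[y])$ is again Noetherian by the Hilbert basis theorem applied through Lemma \ref{t-Noe}), so Proposition \ref{N-qgld} applies to $R[x]$ as well, which will be essential for the second equality.

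For the first equality I would argue as follows. The inequality $\tau_q\mbox{-\gld}(R)\geq\gld(\T(R[x]))$ is exactly Proposition \ref{T-L-3}(1), so only $\tau_q\mbox{-\gld}(R)\leq\gld(\T(R[x]))$ needs work. Suppose $\gld(\T(R[x]))=n<\infty$ (the case $n=\infty$ being vacuous). By Theorem \ref{w-g-flat} it suffices to bound $\tau_q\mbox{-\pd}_R(M)$ for every finitely generated $M$. Since $R$ is $\tau_q$-Noetherian, any such $M$ admits a $\tau_q$-exact sequence $0\to A\to P_{n-1}\to\cdots\to P_0\to M\to 0$ with each $P_i$ finitely generated projective and $A$ finitely generated; this is precisely the setup of Corollary \ref{T-L-2}. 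Applying $-\otimes_R\T(R[x])$ and using $\pd_{\T(R[x])}(M\otimes_R\T(R[x]))\leq n$ shows $A\otimes_R\T(R[x])$ is projective over $\T(R[x])$, whence $A$ is $\tau_q$-projective by Theorem \ref{T-L-tau} (using that $R$ is an $\A$-ring). Thus $\tau_q\mbox{-\pd}_R(M)\leq n$, giving the reverse inequality. In effect the first equality is Corollary \ref{T-L-2} pushed to the level of global dimensions.

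For the second equality I would again treat the two directions. To get $\tau_q\mbox{-\gld}(R[x])\geq\tau_q\mbox{-\gld}(R)$, I expect to use Lemma \ref{q-poly}: for $\m\in q\mbox{-}\Max(R)$, the localization $M\otimes_R\T(R[x])_{\m\otimes_R\T(R[x])}\cong M[x]_{\m[x]}$ identifies localizations of $R$-modules at maximal $q$-ideals with localizations of the corresponding $R[x]$-modules, so that via Proposition \ref{N-qgld} the suprema $\sup_{\m}\gld(R_\m)$ and $\sup_{\n}\gld(R[x]_\n)$ over maximal $q$-ideals can be compared. The key point is that $\m[x]$ is a maximal $q$-ideal of $R[x]$ whenever $\m$ is a maximal $q$-ideal of $R$, and that $R[x]_{\m[x]}\cong R_\m[x]_{\m R_\m[x]}$ has global dimension $\gld(R_\m)$ (not $\gld(R_\m)+1$) after the localization collapses the polynomial variable — this localized statement is the crux. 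For the reverse inequality $\tau_q\mbox{-\gld}(R[x])\leq\tau_q\mbox{-\gld}(R)$, I would combine the already-proven first equality (applied to $R[x]$ in place of $R$) with the isomorphism $\T(R[x][y])\cong\T(R[x,y])\cong\T(R[x])(\cdots)$ to get $\tau_q\mbox{-\gld}(R[x])=\gld(\T(R[x][y]))$, and then show $\gld(\T(R[x][y]))=\gld(\T(R[x]))$ because the relevant localization again absorbs the extra variable rather than adding one to the global dimension.

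The main obstacle I anticipate is precisely this phenomenon that the $\tau_q$-machinery makes the polynomial variable disappear rather than contribute $+1$, in sharp contrast to the classical Hilbert syzygy theorem. Controlling this requires the careful localization identities of Lemma \ref{q-poly} together with the fact that localizing $\T(R[x])$ at a prime coming from a maximal $q$-ideal inverts the polynomial content, so that no homological dimension is gained; verifying that $\gld(\T(R[x][y]))=\gld(\T(R[x]))$ (equivalently that the extra variable is invertible after passing to the relevant total ring of quotients) is the step that carries all the real content, and everything else is a formal assembly of Corollary \ref{T-L-2}, Proposition \ref{N-qgld}, and Theorem \ref{w-g-flat}.
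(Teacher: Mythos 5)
Your overall architecture matches the paper's: both proofs split the statement into $\tau_q$-\gld$(R)=\gld(\T(R[x]))$ and $\tau_q$-\gld$(R)=\tau_q$-\gld$(R[x])$, and both run on Proposition \ref{N-qgld}, Lemma \ref{q-poly}, Corollary \ref{T-L-2} and Theorem \ref{w-g-flat}. Your first equality is sound, and in fact more self-contained than the paper's: you get ``$\geq$'' from Proposition \ref{T-L-3}(1) and ``$\leq$'' by re-running the argument of Corollary \ref{T-L-2} (via Theorem \ref{T-L-tau} and the $\A$-ring property) together with Theorem \ref{w-g-flat}, whereas the paper detours through $\tau_q$-weak global dimensions, quoting \cite{ZBAQ} and \cite{fk16}.

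The genuine gap is in your second equality, at the step you yourself flag as carrying ``all the real content'': you never prove that $\gld(R[x]_{\m[x]})=\gld(R_\m)$ for $\m\in q$-$\Max(R)$, equivalently that $\gld(\T(R[x][y]))=\gld(\T(R[x]))$, and both directions of your argument rest on it. Moreover, the reason you sketch --- that ``the extra variable is invertible after passing to the relevant total ring of quotients'' --- cannot be the explanation: inverting the variable does not prevent the global dimension from rising, since $A[y,y^{-1}]$ is a localization of $A[y]$ in which $y$ is a unit and yet $\gld(A[y,y^{-1}])=\gld(A)+1$ (already for $A$ a field). What your claim really amounts to is that the Nagata-type extension $R_\m \rightarrow R_\m[x]_{\m R_\m[x]}$ of Noetherian local rings preserves global dimension, which requires a genuine argument (faithfully flat ascent/descent of regularity plus the height computation $\mathrm{ht}(\m R_\m[x])=\mathrm{ht}(\m R_\m)$) that appears neither in your proposal nor among the paper's tools. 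The paper's proof avoids this entirely, and shows your crux is bypassable: by Lemma \ref{q-poly}, for $\q\in q$-$\Max(R)$ the ring $R[x]_{\q[x]}$ \emph{is} the localization $\T(R[x])_{\q\otimes_R\T(R[x])}$ of $\T(R[x])$ at a maximal ideal; hence, since $q$-$\Max(R[x])=\{\q[x]\mid \q\in q\mbox{-}\Max(R)\}$, Proposition \ref{N-qgld} applied to $R[x]$ gives $\tau_q$-\gld$(R[x])=\sup_{\q}\gld(\T(R[x])_{\q\otimes_R\T(R[x])})\leq\gld(\T(R[x]))=\tau_q$-\gld$(R)$, using only that localization does not increase global dimension; the opposite inequality is then run at the level of modules (Proposition \ref{T-L-1}(2) applied over $R[x]$, Lemma \ref{q-poly}, Noetherianness of $\T(R[x])$, and Corollary \ref{T-L-2}), again with no Nagata-ring input. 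If you reorganize your second equality this way, the ``variable absorption'' you worried about is packaged once and for all in the ring isomorphism of Lemma \ref{q-poly}, and nothing beyond the paper's own lemmas is needed.
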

\begin{proof} First, we will show $\tau_q$-\gld$(R)=$\gld$(\T(R[x]))$. Since $R$ is a $\tau_q$-Noetherian ring, $\T(R[x])$ is a Noetherian ring by \cite[Theorem 3.8]{ZDC20}. So $\tau_q$-\gld$(R)=\tau_q$-\cwd$(R)$ by \cite[Theorem 3.9]{ZBAQ} and Proposition \ref{N-qgld}. Then it follows by \cite[Theorem 5.5]{ZBAQ} and \cite[Corollary 3.9.6]{fk16} that
\begin{center}
$\tau_q$-\gld$(R)=\tau_q$-\cwd$(R)=\cwd(T(R[x]))=$ \gld$(T(R[x]).$
\end{center}


Next, we will show $\tau_q$-\gld$(R)\leq \tau_q$-\gld$(R[x])$.
Since $R$ is a $\tau_q$-Noetherian ring, then $R[x]$ is also a $\tau_q$-Noetherian ring by \cite[Theorem 3.8]{ZDC20}.   Suppose $\tau_q$-\gld$(R[x])\leq n$. Let $M$ be a finitely generated $R$-module and  $\m$ be a maximal $q$-ideal of $R$, that is, a maximal non-(semi)regular ideal of $R$.  Then $\m[x]\in q$-$\Max(R[x])$. By Lemma \ref{q-poly}, we have (we denote by $-\otimes_R\T(R[x]):=(-)^T$) $$\pd_{T(R[x])_{\m^T}}(M^T)_{\m^T}=\pd_{R[x]_{\m[x]}}(M[x])_{\m[x]}\leq n.$$ Since $\T(R[x])$ is a Noetherian ring, so \pd$_{\T(R[x])}(M^T)\leq n$, and hence $\tau_q$-\pd$_R(M)\leq n$ by Corollary \ref{T-L-2}. Consequently, $\tau_q$-\gld$(R[x])\geq \tau_q$-\gld$(R)$.

Finally, we will show $\tau_q$-\gld$(R)\geq \tau_q$-\gld$(R[x])$. Suppose $\tau_q$-\gld$(R)\leq n$. Let $\m\in q$-$\Max(R[x])$. Then $\m$ is a maximal non-regular ideal of $R[x]$. Then $\m\cap R$ is also maximal  non-regular. So $\q := \m\cap R\in q$-$\Max(R)$ and $\m = \q[x]$.
It follows by Proposition \ref{N-qgld} and Lemma \ref{q-poly} that
\begin{align*}
\tau_q\mbox{-\gld}(R[x])=&\sup\{\mbox{\gld}(R[x]_\m)\mid \m\in q\mbox{-\Max}(R[x])\}\\
= & \sup\{\mbox{\gld}(T(R[x])_{\m\otimes_RT(R[x])})\mid \m\in q\mbox{-\Max}(R[x])\} \\
\leq & \tau_q\mbox{-\gld}(\T(R[x])).
\end{align*}
Consequently, $\tau_q$-\gld$(R)=\tau_q$-\gld$(R[x])$.
\end{proof}

\begin{remark}
We do not known that whether the condition that ``$R$ is a $\tau_q$-Noetherian ring'' in Theorem \ref{Hilb} can be removed. For further research, we propose the following open question:
 \begin{open question}
 What are the relationships among the three homological dimensions:  $\tau_q$-\gld$(R)$, $\tau_q$-\gld$(R[x])$ and \gld$(\T(R[x]))$ for a general ring $R$?
  \end{open question}
\end{remark}

\section{$\tau_q$-semisimple rings}

First we recall some notions on generalized injective modules utilizing Baer's Criterion.
Recall from \cite{fl11} that an $R$-module $M$ is said to be  \emph{reg-injective} if $\Ext_R^1(R/I,M)=0$ for every regular ideal $I$ of $R$; and $M$ is said to be  \emph{semireg-injective} if $\Ext_R^1(R/I,M)=0$ for every   semi-regular ideal $I$ of $R$.  Certainly,  Lucas module modules are exactly $\Q$-torsion-free  semireg-injective modules.  Moreover, we have the following implications:
$${\boxed{\mbox{injective modules}}}\Longrightarrow {\boxed{\mbox{semireg-injective modules}}}\Longrightarrow {\boxed{\mbox{reg injective modules}}}$$

The following $\tau_q$-semisimple rings to be  introduced will characterize when all reg-injective modules or all semireg-injective modules are injective (see Theorem \ref{0-d}).

\begin{definition}\label{s-lucas}
A ring $R$ is said to be $\tau_q$-semisimple if $\tau_q$-$\gld(R)=0$.
\end{definition}

Before going further, we recall some notions on Goldie rings and  rings with semisimple quotients.
Recall from \cite{B15}  that a ring $R$ is called a   \emph{Goldie ring} if it has a.c.c. on annihilators and does not contain infinite direct sums of nonzero ideals. Every integral domain is a Goldie ring. The famous Goldie Theorem states that a ring $R$ has its total ring of  quotients $\T(R)$ semisimple if and only if $R$ is reduced and Goldie (see \cite{G60}). Recently, Bavula \cite[Theorem 5.1]{B15} shows that a ring $R$ has its total ring of  quotients $\T(R)$ semisimple if and only if $R$ is a reduced ring with finite minimal primes,  and for each minimal prime $\p$, the integral domain $R/\p$ is a Goldie ring.  So for commutative rings, we have the following result.

\begin{lemma}\label{t-ss}\cite[Theorem 5.1]{B15}
Let $R$ be a commutative ring. Then $\T(R)$ is a semisimple ring if and only if $R$ is a reduced ring  with finite minimal primes.
\end{lemma}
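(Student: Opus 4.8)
The plan is to prove both implications directly, leaning on the elementary fact that a commutative semisimple ring is exactly a finite direct product of fields (Artin--Wedderburn), together with the standard correspondence $\Spec(\T(R))\cong\{\p\in\Spec(R)\mid \p\suse \Z(R)\}$ coming from the localization $\T(R)=R_S$ with $S=R\setminus\Z(R)$ the set of non-zero-divisors.

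For the forward implication I would argue as follows. If $\T(R)$ is semisimple, write $\T(R)\cong K_1\times\cdots\times K_n$ with each $K_i$ a field. This product is reduced, and since $S$ consists of non-zero-divisors the canonical map $R\to\T(R)$ is injective; hence $R$ is a subring of a reduced ring and is therefore reduced. Furthermore $\Spec(\T(R))$ has exactly $n$ elements, and under the correspondence above each minimal prime of $R$ (being contained in $\Z(R)$) contracts from a prime of $\T(R)$, distinct minimal primes giving distinct primes. Thus $|\Min(R)|\le n<\infty$.

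For the reverse implication, assume $R$ is reduced with $\Min(R)=\{\p_1,\dots,\p_n\}$ finite. Reducedness gives $\Z(R)=\bigcup_{i=1}^n\p_i$, and $\T(R)=R_S$ is again reduced. The decisive step is to show that $\T(R)$ is zero-dimensional. A prime $\q$ of $R$ survives in $\T(R)$ precisely when $\q\suse\Z(R)=\bigcup_i\p_i$; prime avoidance then forces $\q\suse\p_j$ for some $j$, while $\q$ contains some minimal prime $\p_k$, so $\p_k\suse\q\suse\p_j$ and minimality yields $\q=\p_j$. Hence $\Spec(\T(R))=\{\p_1\T(R),\dots,\p_n\T(R)\}$, every prime being simultaneously minimal and maximal. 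Since $\T(R)$ is reduced of dimension zero, $\bigcap_i\p_i\T(R)=\Nil(\T(R))=0$, and these finitely many pairwise comaximal maximal ideals let the Chinese Remainder Theorem identify $\T(R)\cong\prod_{i=1}^n \T(R)/\p_i\T(R)$, a finite product of fields, i.e.\ a semisimple ring.

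The main obstacle is isolating where reducedness does the work, namely in the reverse direction: the prime-avoidance collapse of every surviving prime onto a minimal prime relies both on the finiteness of $\Min(R)$ and on the identity $\Z(R)=\bigcup\Min(R)$, the latter failing without reducedness. Everything after zero-dimensionality is routine CRT. I note that this yields a self-contained commutative proof of \cite[Theorem 5.1]{B15}: the Goldie hypothesis on each $R/\p_i$ appearing there is automatic in the present setting, since $R/\p_i$ is an integral domain and every commutative integral domain is a Goldie ring.
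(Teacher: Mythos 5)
Your proof is correct, but it takes a genuinely different route from the paper. The paper does not prove this lemma directly at all: it quotes Bavula's general criterion \cite[Theorem 5.1]{B15} (itself rooted in noncommutative Goldie theory), which says that $\T(R)$ is semisimple if and only if $R$ is reduced with finitely many minimal primes \emph{and} each $R/\p$ is a Goldie ring, and then observes that the Goldie hypothesis is vacuous in the commutative setting because every commutative integral domain is Goldie. You instead build a self-contained commutative argument: the forward direction via the embedding $R\hookrightarrow\T(R)$ and the localization correspondence $\Spec(\T(R))\cong\{\q\in\Spec(R)\mid \q\subseteq\Z(R)\}$, and the reverse direction via the identity $\Z(R)=\bigcup_{i}\p_i$ for reduced rings, prime avoidance to collapse every surviving prime onto a minimal one, and the Chinese Remainder Theorem applied to the finitely many comaximal maximal ideals of the reduced zero-dimensional ring $\T(R)$. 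Each step you invoke (minimal primes consist of zero-divisors, every prime contains a minimal prime, commutative semisimple $=$ finite product of fields) is standard and used correctly, so the argument is complete. What the paper's route buys is brevity and a direct link to the Goldie-theoretic literature it already cites; what your route buys is independence from that machinery --- in effect you reprove the commutative case of \cite[Theorem 5.1]{B15} from first principles, and your closing remark that the Goldie condition on $R/\p_i$ is automatic is exactly the observation the paper makes when specializing Bavula's theorem.
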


The following main result of this section shows rings $R$ with $\T(R)$ semisimple are exactly $\tau_q$-semisimple rings.

\begin{theorem}\label{0-d}
Let $R$ be a ring. Then the following statements are equivalent.
\begin{enumerate}
    \item $R$ is a $\tau_q$-semisimple ring.
   \item Every $R$-module is $\tau_q$-projective.
   \item $\T(R[x])$ is a semisimple ring.

   \item  $R[x]$ is a reduced ring with finite minimal primes.
      \item   $\T(R)$ is a semisimple ring.
   \item  $R$ is a reduced ring with finite minimal primes.
   \item Every  reg-injective module is injective.
     \item Every  semireg-injective module is injective.
   \item Every  Lucas module is injective.
 \item Every strongly Lucas module is injective.
 \item $\Q_0(R)$ is a semisimple ring.
\end{enumerate}
\end{theorem}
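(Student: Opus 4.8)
The plan is to establish the eleven conditions as a single cycle
$(1)\Rightarrow(3)\Rightarrow(6)\Rightarrow(7)\Rightarrow(8)\Rightarrow(9)\Rightarrow(10)\Rightarrow(1)$,
attaching $(2)$, $(4)$, $(5)$ and $(11)$ as side equivalences. Several links are essentially free. By the very definition of $\tau_q$-$\gld$, condition $(1)$ says $\tau_q$-$\pd_R(M)=0$ for every $M$, which is exactly $(2)$. By the closing formula of Theorem \ref{w-g-flat}, $\tau_q$-$\gld(R)=\sup\{\id_R(N)\mid N\text{ strongly Lucas}\}$, so $(1)$ is equivalent to $(10)$: every strongly Lucas module has injective dimension $0$. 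Finally the chain $(7)\Rightarrow(8)\Rightarrow(9)\Rightarrow(10)$ is a sequence of pure class inclusions: every semireg-injective module is reg-injective (regular ideals are semi-regular), every Lucas module is in particular semireg-injective, and every strongly Lucas module is Lucas; hence each injectivity hypothesis descends to the smaller class.

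For the reduced block I would first note $(1)\Rightarrow(3)$: Proposition \ref{T-L-3}(1) gives $\gld(\T(R[x]))\leq\tau_q$-$\gld(R)=0$, so $\T(R[x])$ has global dimension $0$ and is semisimple. (As a cross-check, once $\T(R[x])$ is semisimple it is Noetherian, $R$ is $\tau_q$-Noetherian, and Theorem \ref{Hilb} recovers $\tau_q$-$\gld(R)=\gld(\T(R[x]))=0$, so $(3)\Rightarrow(1)$ holds too.) The equivalences $(3)\Leftrightarrow(4)$ and $(5)\Leftrightarrow(6)$ are Lemma \ref{t-ss} applied to $R[x]$ and to $R$; and $(4)\Leftrightarrow(6)$ follows from $\Nil(R[x])=\Nil(R)[x]$ together with the bijection $\p\mapsto\p[x]$ between $\Min(R)$ and $\Min(R[x])$, so reducedness and finiteness of the minimal spectrum transfer between $R$ and $R[x]$. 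Condition $(11)$ is treated along the chain $R\subseteq\Q_0(R)\subseteq\T(R[x])$: reducedness and finiteness of $\Min$ are inherited along this chain, and Bavula's criterion (Lemma \ref{t-ss}) applied to $\Q_0(R)$ ties $\Q_0(R)$ semisimple to $(6)$. This last bookkeeping is routine but slightly delicate, as it uses that $\Q_0(R)$ is its own ring of finite fractions.

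The heart of the theorem is the single step $(6)\Rightarrow(7)$, where I expect the genuine work. Assume $R$ is reduced with $\Min(R)=\{\p_1,\dots,\p_n\}$ finite; then $\T(R)\cong\prod_{i=1}^{n}\mathrm{Frac}(R/\p_i)$ is semisimple, and being of Krull dimension $0$ it is an $\A$-ring, whence $R$ is an $\A$-ring by \cite[Corollary 2.6]{H88}. Let $M$ be reg-injective and let $S$ be the set of non-zero-divisors. From $0\to R\xrightarrow{s}R\to R/sR\to 0$ and $\Ext_R^1(R/sR,M)=0$ for $s\in S$ one reads $M=sM$, so $M$ is $S$-divisible; hence the localization map $M\to S^{-1}M$ is surjective with kernel $\tor_{\Q}(M)$ (over an $\A$-ring $\Q$-torsion coincides with $S$-torsion), giving $M/\tor_{\Q}(M)\cong S^{-1}M$. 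Now $S^{-1}M$ is a module over the semisimple ring $\T(R)=S^{-1}R$, so it is injective over $\T(R)$ and therefore injective over $R$ by descent of injectivity along the localization (the mechanism behind \cite[Exercise 3.15]{fk16}). Thus $M$ is an extension of an injective module by its $\Q$-torsion part, and I would close by proving that $\tor_{\Q}(M)$ is itself injective and that $0\to\tor_{\Q}(M)\to M\to S^{-1}M\to 0$ splits, exhibiting $M$ as a sum of two injectives. Showing that the $\Q$-torsion submodule of a reg-injective module over a reduced ring with finitely many minimal primes is again injective is the main obstacle, and the only place where the finiteness of $\Min(R)$ enters structurally, through the idempotent decomposition of $\T(R)$.

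Two further $\A$-ring remarks keep the injective block internally coherent: over an $\A$-ring semi-regular ideals are regular, so reg-injective and semireg-injective modules coincide, which independently gives $(7)\Leftrightarrow(8)$; and by Theorem \ref{mccoy} every Lucas module is strongly Lucas, which gives $(9)\Leftrightarrow(10)$. Since $R$ is known to be an $\A$-ring only after $(6)$ is in force, I would keep these inside the cycle as consistency checks rather than using them to short-circuit it. With $(6)\Rightarrow(7)$ in hand the cycle closes and all eleven statements are equivalent.
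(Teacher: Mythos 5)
Most of your architecture is sound, and several of your links are actually cleaner than the paper's own: $(1)\Rightarrow(3)$ via Proposition \ref{T-L-3}(1), $(1)\Leftrightarrow(10)$ via the dimension formula closing Theorem \ref{w-g-flat}, the inclusion chain $(7)\Rightarrow(8)\Rightarrow(9)\Rightarrow(10)$, and the block $(3)\Leftrightarrow(4)\Leftrightarrow(6)\Leftrightarrow(5)$ via Lemma \ref{t-ss} all check out (the paper instead closes its cycle with $(2)\Rightarrow(3)$ by a direct $M\otimes_R\T(R[x])\cong M[x]\cong\bigoplus M$ computation and $(10)\Rightarrow(2)$ by definition, but your variants are correct). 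However, there is a genuine gap at precisely the step you yourself call the heart: $(6)\Rightarrow(7)$ is never proved. Your reduction correctly shows that a reg-injective $M$ is $S$-divisible, that $M/\tor_{\Q}(M)\cong S^{-1}M$, and that $S^{-1}M$ is injective over $R$ (being a module over the semisimple ring $\T(R)$, which is flat over $R$); but the conclusion then hinges entirely on $\tor_{\Q}(M)$ being injective, which you announce as ``the main obstacle'' and leave without an argument. That residue is not a technicality: by Baer's criterion it amounts to $\Ext_R^1(R/I,\tor_{\Q}(M))=0$ for \emph{non-regular} ideals $I$, and vanishing at non-regular ideals is exactly what distinguishes reg-injectivity from injectivity --- it is the whole content of $(6)\Rightarrow(7)$. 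Indeed $\tor_{\Q}(M)$ is itself reg-injective (from the long exact sequence, using that $S^{-1}M$ is $S$-torsion-free so $\Hom_R(R/I,S^{-1}M)=0$ for regular $I$), so your decomposition has merely replaced one reg-injective module by another; the problem has not been reduced. Without this step, conditions $(7),(8),(9)$ are only shown to imply the other conditions, never to follow from them.

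For comparison, the paper's proof of $(6)\Rightarrow(7)$ works directly on $M$ via Matlis's structure theory of reduced rings with finitely many minimal primes: writing $\Min(R)=\{\p_1,\dots,\p_n\}$, a nonzero non-regular ideal may be assumed of the form $I=\bigcap_{i\in\Lambda}\p_i$ for a proper nonempty $\Lambda$ (\cite[Proposition 1.3]{M83}); by \cite[Propositions 1.6, 3.9]{M83} there exist $j\notin\Lambda$ and $s\in\p_j-\bigcup_{i\in\Lambda}\p_i$ with $I\cap Rs=0$; one then checks $I\oplus Rs$ is a regular ideal (assuming $I\oplus Rs\subseteq\p_k$ yields a contradiction whether $k\in\Lambda$ or not), so reg-injectivity extends any homomorphism $I\oplus Rs\to M$ to $R$, and composing with the projection onto the direct summand $I$ extends any homomorphism $I\to M$, giving $\Ext_R^1(R/I,M)=0$. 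This is the concrete use of finiteness of $\Min(R)$ that your proposal gestures at (``the idempotent decomposition of $\T(R)$'') but never supplies; if you want to keep your torsion-splitting strategy, you would still have to run this same analysis on $\tor_{\Q}(M)$, so the Matlis-type argument is unavoidable either way.
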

\begin{proof} $(1)\Leftrightarrow (2)$ and $(7)\Rightarrow (8)\Rightarrow (9)\Rightarrow (10)$ Trivial.

$(10)\Rightarrow (2)$ It follows by the definition of $\tau_q$-projective modules.

$(2)\Rightarrow (3)$ Let $M$ be a $\T(R[x])$-module. Then it is a $\tau_q$-projective $R$-module. Thus $M\otimes_R\T(R[x])$ is an projective $\T(R[x])$-module by Proposition \ref{T-L}. Note that $$M\otimes_R\T(R[x])\cong M[x]\otimes_{R[x]}\T(R[x])\cong M[x]\otimes_{\T(R[x])}\T(R[x])\cong M[x]\cong \bigoplus_{i=1}^{\infty}M.$$
Consequently, $M$ be a projective $\T(R[x])$-module. So $\T(R[x])$ is a semisimple ring.

$(3)\Leftrightarrow (4)$ and $(5)\Leftrightarrow (6)$ They follow by Lemma \ref{t-ss}.

$(4)\Leftrightarrow (6)$ The equivalence of  reducibilities follows by  $\Nil(R[x])=\Nil(R)[x]$ (see \cite[Exercise 1.47]{fk16}).
For the equivalence of finiteness of minimal prime spectrums, we claim $\Min(R[x])=\{\p[x]\mid \p\in \Min(R)\}$.
Indeed, let $Q\subseteq \p[x]$ be a prime of $R[x]$ with $\p\in \Min(R)$. Then $R\cap Q$ is a prime ideal of $R$ and $R\cap Q\subseteq R\cap \p[x]=\p$. Hence $R\cap Q=\p$. Then $\p[x]=(R\cap Q)[x]\subseteq Q\subseteq \p[x]$, and hence $Q=\p[x]$. On the other hand, suppose $Q$ is a minimal prime of $R[x]$. Then $R\cap Q[x]=Q$. Let $\p=R\cap Q$. We will $\p$ is a minimal prime of $R$. Let $\q\subseteq \p$ be a  prime of $R$. Then $\q[x]\subseteq \p[x]=Q$ which is minimal. Hence $\q[x]=\p[x]$, and so $\q=\p$. Consequently, $\Min(R[x])=\{\p[x]\mid \p\in \Min(R)\}$. Hence the equivalence holds.

$(6)\Rightarrow (7)$ Suppose  $R$ is a reduced ring with finite minimal primes, say $\Min(R)=\{\p_1,\cdots,\p_n\}$. It follows by \cite[Proposition 1.1, Proposition 1.6]{M83} that the total ring of quotients $\T(R)\cong \bigoplus\limits_{i=1}^n R_{\p_i}$, where each  $R_{\p_i}$ is the quotient field of $R/{\p_i}$, and $\Reg(R)=R-\bigcup\limits_{i=1}^n\p_i$ by \cite[Proposition 1.1]{M83}.  Let $I$ be a  non-zero non-regular ideal of $R$. We may assume $I=\bigcap_{i\in \Lambda}\p_i$ for some nonempty $\Lambda\subsetneq \{1,\dots,n\}$ by \cite[Proposition 1.3]{M83}. Let $M$ be a reg-injective $R$-module.  To show $M$ is injective, we just need to show $\Ext_R^1(R/I,M)=0$. It follows by \cite[Proposition 1.6, Proposition 3.9]{M83} that there exits $j\not\in \Lambda$ and an element $s\in \p_j-\bigcup_{i\in \Lambda}\p_i $ such that $I\cap Rs=0$. We claim $I\oplus Rs$ is a regular ideal. On contrary, suppose $I\oplus Rs\subseteq \bigcup\limits_{i=1}^n\p_i$. Then there exists $k\in\{1,\dots,n\}$ such that $I\oplus Rs\in \p_k$. If $k\in \Lambda$, then $s\in \p_k\subseteq \bigcup_{i\in \Lambda}\p_i$, a contradiction. If $k\not\in \Lambda$, then $I=\bigcap_{i\in \Lambda}\p_i\subseteq  \p_k$. Hence there exists $i\in \Lambda$ such that $\p_i\subseteq \p_k$ which is also a contradiction. Consequently, $I\oplus Rs$ is a regular ideal.  Since $M$ is reg-injective, every $R$-homomorphism from $I\oplus Rs$ to $E$ can be lifted to $R$. Since $I$ is a direct summand of $I\oplus Rs$, every $R$-homomorphism from $I$ to $E$ can also be lifted to $R$. Consequently, $\Ext_R^1(R/I,M)=0$, and so $M$ is injective.

$(11)\Rightarrow (4)$ Suppose $\Q_0(R)$ is a semisimple ring. Then $\Q_0(R)$ is a reduced ring with finite minimal primes.  As $\Min(\Q_0(R))$ is naturally isomorphic to both $\Min(\Q_0(R)[x])$ and $\Min(\T(R[x]))$. We have $\T(R[x])$ is a reduced ring with finite minimal primes.

$(3)\Rightarrow (11)$ Suppose $\T(R[x])$ s a semisimple ring. Then, as $(11)\Rightarrow (4)$, we have $\Q_0(R)$ is a reduced ring with finite minimal primes. So the total quotient ring  of $\Q_0(R)$, namely, itself is a semisimple ring.
\end{proof}

\begin{remark}  From  Theorem \ref{0-d} and its proof, one can show that every finite product of integral domains and every reduced Noetherian ring are $\tau_q$-semisimple rings; and all  $\tau_q$-semisimple rings are $\A$-rings.
\end{remark}

The following results can be easily deduced by Theorem \ref{0-d}.
\begin{corollary}
A ring $R$ is a $\tau_q$-semisimple ring if and only if so is $R[x_1,\dots,x_n]$.
\end{corollary}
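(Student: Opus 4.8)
The plan is to prove the biconditional ``$R$ is $\tau_q$-semisimple if and only if $R[x_1,\dots,x_n]$ is'' by reducing immediately to the single-variable case and then iterating. The key observation is that Theorem \ref{0-d} furnishes an intrinsic, non-homological characterization of $\tau_q$-semisimplicity, namely condition (6): the ring in question must be \emph{reduced with finitely many minimal primes}. So rather than manipulating $\tau_q$-projective dimensions directly, I would translate the entire statement into this condition and verify it is stable under adjoining a polynomial variable.

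First I would show $R$ is $\tau_q$-semisimple $\iff$ $R[x]$ is $\tau_q$-semisimple. By Theorem \ref{0-d}, $R$ is $\tau_q$-semisimple iff $R$ is reduced with $\Min(R)$ finite (equivalence of (1) and (6)), and likewise $R[x]$ is $\tau_q$-semisimple iff $R[x]$ is reduced with $\Min(R[x])$ finite. But the equivalence $(4)\Leftrightarrow(6)$ proved inside Theorem \ref{0-d} already establishes exactly this: it shows that $R[x]$ is reduced with finite minimal primes iff $R$ is, using $\Nil(R[x])=\Nil(R)[x]$ for reducedness and the bijection $\Min(R[x])=\{\p[x]\mid \p\in\Min(R)\}$ for finiteness of the minimal spectrum. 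Hence the single-variable case is essentially immediate from the theorem, and I need only cite it.

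Second, I would iterate. Writing $R[x_1,\dots,x_n]=R[x_1,\dots,x_{n-1}][x_n]$, a straightforward induction on $n$ reduces the multivariable claim to $n$ applications of the single-variable equivalence just established. The base case $n=0$ is trivial and each inductive step applies the one-variable result to the ring $R[x_1,\dots,x_{n-1}]$.

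I do not anticipate a genuine obstacle here, since the corollary is designed to fall out of the machinery already built: the real content lives in Theorem \ref{0-d}, particularly the computation $\Min(R[x])=\{\p[x]\mid\p\in\Min(R)\}$, which is proved there and transfers both reducedness and finiteness of minimal primes between $R$ and $R[x]$. The only point requiring a word of care is ensuring the induction is set up cleanly—applying the one-variable equivalence at each stage to the correct intermediate polynomial ring—but this is routine bookkeeping rather than a mathematical difficulty.
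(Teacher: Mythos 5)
Your proposal is correct and follows essentially the same route as the paper, which deduces this corollary directly from Theorem \ref{0-d}: the equivalence of $\tau_q$-semisimplicity with being reduced with finitely many minimal primes, combined with the $(4)\Leftrightarrow(6)$ step (via $\Nil(R[x])=\Nil(R)[x]$ and $\Min(R[x])=\{\p[x]\mid \p\in\Min(R)\}$), gives the one-variable case, and iteration handles $n$ variables. Your write-up simply makes explicit the induction that the paper leaves implicit.
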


\begin{corollary}\label{pow}
A ring  $R$ is a $\tau_q$-semisimple ring if and only if so is $R[\![x_1,\dots,x_n]\!]$.
\end{corollary}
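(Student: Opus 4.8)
The plan is to reduce the claim about the power series ring $R[\![x_1,\dots,x_n]\!]$ to the criterion in Theorem \ref{0-d}, namely that a ring is $\tau_q$-semisimple if and only if it is reduced with finitely many minimal primes. By an obvious induction on $n$ it suffices to treat the one-variable case $S=R[\![x]\!]$, since $R[\![x_1,\dots,x_n]\!]\cong (R[\![x_1,\dots,x_{n-1}]\!])[\![x_n]\!]$; so I will concentrate on showing that $R$ is reduced with $\Min(R)$ finite if and only if $R[\![x]\!]$ is reduced with $\Min(R[\![x]\!])$ finite.

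First I would handle the reducedness. Unlike the polynomial case, it is \emph{not} true in general that $\Nil(R[\![x]\!])=\Nil(R)[\![x]\!]$, so the argument from the proof of $(4)\Leftrightarrow(6)$ in Theorem \ref{0-d} does not transfer verbatim. However, the \emph{reducedness} equivalence is elementary: if $R$ is reduced then a formal power series $f=\sum a_ix^i$ with $f^2=0$ forces, by comparing coefficients inductively (the lowest-degree coefficient squares to zero, hence vanishes, and one proceeds upward), every $a_i$ to be nilpotent, hence zero; conversely $R$ embeds in $R[\![x]\!]$, so reducedness descends trivially. Thus $R$ is reduced if and only if $R[\![x]\!]$ is reduced.

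Next I would establish the bijection between minimal primes. The key step is the identification $\Min(R[\![x]\!])=\{\p[\![x]\!]\mid \p\in\Min(R)\}$, analogous to the polynomial statement proved inside Theorem \ref{0-d}. In the reduced setting this is cleanest: when $R$ is reduced, each $\p[\![x]\!]$ is prime (the quotient $R[\![x]\!]/\p[\![x]\!]\cong (R/\p)[\![x]\!]$ is a power series ring over a domain, hence a domain), and contraction along $R\hookrightarrow R[\![x]\!]$ sends a minimal prime $Q$ of $R[\![x]\!]$ to a prime $\p=Q\cap R$ that one checks is minimal in $R$, with $Q=\p[\![x]\!]$; the reverse containment uses that $Q$ contains $x$ is impossible for a minimal prime over a reduced ring, or more directly that $Q\supseteq (Q\cap R)[\![x]\!]=\p[\![x]\!]$ together with minimality. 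This gives a bijection $\p\leftrightarrow \p[\![x]\!]$ and hence $\Min(R)$ is finite exactly when $\Min(R[\![x]\!])$ is finite.

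Combining the two equivalences with Lemma \ref{t-ss} and Theorem \ref{0-d} (applied once to $R$ and once to $S$) closes the argument: $R$ is $\tau_q$-semisimple $\Leftrightarrow$ $R$ is reduced with $\Min(R)$ finite $\Leftrightarrow$ $R[\![x]\!]$ is reduced with $\Min(R[\![x]\!])$ finite $\Leftrightarrow$ $R[\![x]\!]$ is $\tau_q$-semisimple, and induction finishes the $n$-variable case. The main obstacle I anticipate is the minimal-prime correspondence for power series rings, since the usual contraction-extension argument is delicate over a power series ring (the failure of $\Nil(R[\![x]\!])=\Nil(R)[\![x]\!]$ in general is the warning sign); the saving grace is that everything takes place under the standing reducedness hypothesis, where $\p[\![x]\!]$ is genuinely prime and the bijection becomes transparent.
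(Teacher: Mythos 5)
Your overall outline—reduce to one variable by induction, translate $\tau_q$-semisimplicity into ``reduced with $\Min$ finite'' via Theorem \ref{0-d}, handle reducedness elementarily, and transfer finiteness of minimal primes—is the same shape as the paper's proof, which simply cites \cite{MB20} for the power-series facts. The reducedness step and the induction are fine. The genuine gap is your key lemma: for an arbitrary reduced ring $R$ it is \emph{false} that $\Min(R[\![x]\!])=\{\p[\![x]\!]\mid\p\in\Min(R)\}$, and your proposed justification, $Q\supseteq(Q\cap R)[\![x]\!]$, is precisely the step that breaks. For polynomials one has $(Q\cap R)[x]=(Q\cap R)R[x]\subseteq Q$ because polynomials are finite sums; for power series, $(Q\cap R)[\![x]\!]$ is in general strictly larger than the extended ideal $(Q\cap R)R[\![x]\!]$ when $Q\cap R$ is not finitely generated, and $Q$ need not contain it. Concretely, let $k$ be a field and $R=\prod_{n\in\mathbb{N}}k$, so that $R[\![x]\!]\cong\prod_{n\in\mathbb{N}}k[\![x]\!]$. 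For an ultrafilter $\mathcal{U}$ on $\mathbb{N}$, the ideal $Q_{\mathcal{U}}=\{(f_n)\mid\{n:f_n=0\}\in\mathcal{U}\}$ is a minimal prime of $R[\![x]\!]$ (its quotient embeds in the ultraproduct, a domain, and minimality follows from the usual idempotent/ultrafilter analysis of primes in a product), and $\p:=Q_{\mathcal{U}}\cap R$ is the corresponding minimal prime of $R$. If $\mathcal{U}$ is non-principal, the series $\sum_{n}e_nx^n$, where $e_n$ is the $n$-th coordinate idempotent, has every coefficient in $\p$ (each complement $\mathbb{N}\setminus\{n\}$ is cofinite, hence in $\mathcal{U}$) but corresponds to $(x^n)_n$, which is not in $Q_{\mathcal{U}}$. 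Hence $Q_{\mathcal{U}}\subsetneq\p[\![x]\!]$: the minimal prime $Q_{\mathcal{U}}$ is not extended, and $\p[\![x]\!]$ is not minimal. (This does not contradict the corollary, since here both $\Min(R)$ and $\Min(R[\![x]\!])$ are infinite; it only kills your lemma.)

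The repair requires asymmetric arguments in the two directions rather than one bijection. If $R$ is reduced with $\Min(R)=\{\p_1,\dots,\p_n\}$ finite, then $\bigcap_{i=1}^{n}\p_i[\![x]\!]=\Nil(R)[\![x]\!]=0$, so any minimal prime $Q$ of $R[\![x]\!]$ contains this \emph{finite} intersection of primes, hence contains some $\p_i[\![x]\!]$, hence equals it by minimality of $Q$; thus $\Min(R[\![x]\!])\subseteq\{\p_1[\![x]\!],\dots,\p_n[\![x]\!]\}$ is finite. Note that finiteness of $\Min(R)$ is used essentially, which is exactly why the unconditional identification fails. Conversely, if $R[\![x]\!]$ is reduced with $\Min(R[\![x]\!])$ finite, then $R$ is reduced, and for each $\p\in\Min(R)$ choose a minimal prime $Q_\p$ of $R[\![x]\!]$ contained in the prime $\p[\![x]\!]$; then $Q_\p\cap R\subseteq\p$ is prime, so $Q_\p\cap R=\p$ by minimality of $\p$, whence $\p\mapsto Q_\p$ is injective and $|\Min(R)|\le|\Min(R[\![x]\!])|<\infty$. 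This direction needs neither ``contractions of minimal primes are minimal'' nor $Q=\p[\![x]\!]$, only that contraction hits every minimal prime of $R$. With these two corrected transfer arguments, your reduction via Theorem \ref{0-d} goes through and recovers the corollary.
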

\begin{proof}
It follows by Theorem \ref{0-d} and \cite[Proposition 1.4(e),Proposition 1.5(b)]{MB20}.
\end{proof}

\begin{remark} Let $R$ be a ring, $\Lambda$ an arbitrary index set and $\{x_\Lambda\}$ a set of indeterminates on $R$.  The author in \cite{MB20} introduced and studied the minimal ideal structure of the polynomial ring $R[x_\Lambda]$ and the formal power series ring $R[\![x_\Lambda]\!]_i$ where $i = 1, 2, 3$ or an infinite cardinal number. Moreover, it follows by Theorem \ref{0-d} and \cite[Proposition 1.4(e),Proposition 1.5(b),Proposition 4.1]{MB20} that a ring  $R$ is a $\tau_q$-semisimple ring if and only if  so is $R[x_\Lambda]$,  if and only if so is $R[\![x_\Lambda]\!]_i$ ($i = 1, 2, 3$ or an infinite cardinal number).
\end{remark}

Let $f:A\rightarrow B$ be a ring homomorphism and $J$ an ideal of $B$. Following from \cite{df09} the  \emph{amalgamation} of $A$ with $B$ along $J$ with respect to $f$, denoted by $A\bowtie^fJ$, is defined as $$A\bowtie^fJ=\{(a,f(a)+j)|a\in A,j\in J\},$$  which is  a subring of of $A \times B$.  Following from \cite[Proposition 4.2]{df09}, $A\bowtie^fJ$  is the pullback $\widehat{f}\times_{B/J}\pi$,
 where $\pi:B\rightarrow B/J$ is the natural epimorphism and $\widehat{f}=\pi\circ f$:
$$\xymatrix@R=20pt@C=25pt{
A\bowtie^fJ\ar[d]^{p_B}\ar[r]_{p_A}& A\ar[d]^{\widehat{f}}\\
B\ar[r]^{\pi}&B/J. \\
}$$

\begin{lemma}\cite[Proposition 5.4]{df09}\label{am-1}
Let $f:A\rightarrow B$ be a ring homomorphism and $J$ an ideal of $B$. Then $A\bowtie^fJ$ is a reduced ring if and only if $A$ is reduced and $\Nil(B)\cap J=0$.
\end{lemma}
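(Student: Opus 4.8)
The plan is to exploit the description of $A\bowtie^fJ$ as a subring of the product $A\times B$, together with the elementary fact that an element of a product ring is nilpotent exactly when each coordinate is nilpotent. Throughout I would use that a commutative ring $S$ is reduced precisely when $\Nil(S)=0$, so the whole argument reduces to locating nilpotents coordinatewise and checking membership in the amalgamation.

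For the forward implication, I would assume $A\bowtie^fJ$ is reduced and extract the two stated conditions by testing on two carefully chosen families of elements. To see that $A$ is reduced, take $a\in A$ with $a^n=0$; then $(a,f(a))$ lies in $A\bowtie^fJ$ (take $j=0$), and $(a,f(a))^n=(a^n,f(a^n))=(0,0)$, so reducedness of $A\bowtie^fJ$ forces $(a,f(a))=0$ and hence $a=0$. To see $\Nil(B)\cap J=0$, take $j\in\Nil(B)\cap J$; then $(0,j)$ lies in $A\bowtie^fJ$ (take $a=0$, so that $f(a)+j=j$), and $(0,j)^n=(0,j^n)=(0,0)$ once $j^n=0$, whence $(0,j)=0$ and so $j=0$.

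For the converse, I would assume $A$ is reduced and $\Nil(B)\cap J=0$, and show directly that $A\bowtie^fJ$ has no nonzero nilpotent. Let $(a,f(a)+j)\in A\bowtie^fJ$ satisfy $(a,f(a)+j)^n=0$. Reading off the first coordinate gives $a^n=0$, so $a=0$ by reducedness of $A$, and then $f(a)=f(0)=0$. The second coordinate now reads $(f(a)+j)^n=j^n=0$, so $j\in\Nil(B)$; since also $j\in J$, the hypothesis $\Nil(B)\cap J=0$ yields $j=0$. Thus $(a,f(a)+j)=(0,0)$, and $A\bowtie^fJ$ is reduced.

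The argument is essentially bookkeeping, so I do not anticipate a serious obstacle; the only point demanding care is verifying that the probe elements $(a,f(a))$ and $(0,j)$ genuinely belong to $A\bowtie^fJ$, since it is precisely this that lets me isolate the contribution of $A$ (via $j=0$) from that of $J$ (via $a=0$) and thereby decouple the two conditions.
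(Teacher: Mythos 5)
Your proof is correct and complete. Note that the paper itself gives no proof of this lemma: it is quoted verbatim from D'Anna--Finocchiaro--Fontana \cite[Proposition 5.4]{df09}, so there is no in-paper argument to compare against. Your coordinatewise argument --- probing with $(a,f(a))$ (the case $j=0$) and $(0,j)$ (the case $a=0$) for the forward direction, and decoupling the two coordinates via reducedness of $A$ for the converse --- is the standard, essentially canonical proof of this fact, and it correctly verifies the only delicate point, namely that the probe elements actually lie in $A\bowtie^fJ$.
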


\begin{lemma}\cite[Proposition 2.6]{df10}\label{am-2}
Let $f:A\rightarrow B$ be a ring homomorphism and $J$ an ideal of $B$. Let $\p$ be a prime ideal of $A$ and $\q$ a prime ideal of $B$. Set
 \begin{enumerate}
    \item $\p'^f:=\p\bowtie^fJ=\{(p,f(p)+j)\mid p\in\p\}$;
   \item $\overline{\q}^f:=\{(a,f(a)+j)\in \p\bowtie^fJ\mid f(a)+j\in\q\}$.
  \end{enumerate}
 Then every prime ideal of $A\bowtie^fJ$ is of the form $\p'^f$ or $\overline{\q}^f$ with $\p\in\Spec(A)$ and $\q\in\Spec(B)$.
\end{lemma}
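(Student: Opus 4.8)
The plan is to exploit the two projections of the amalgamation onto its factors and to reduce the computation of $\Spec(A\bowtie^fJ)$ to a dichotomy governed by the ideal $0\times J$. First I would record the two maps $p_A\colon A\bowtie^fJ\to A$, $(a,f(a)+j)\mapsto a$, and $p_B\colon A\bowtie^fJ\to B$, $(a,f(a)+j)\mapsto f(a)+j$, which are precisely the projections from the pullback square displayed above. A direct check gives $\Ker(p_A)=\{(0,j)\mid j\in J\}=0\times J$ and $\Ker(p_B)=\{(a,0)\mid f(a)\in J\}=f^{-1}(J)\times 0$, while $p_A$ is surjective and $\Im(p_B)=f(A)+J$, a subring of $B$ that contains $J$ as an ideal; write $C:=f(A)+J$. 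The single computation that drives everything is $(0\times J)\cdot\Ker(p_B)=0$, since $(0,j)(a,0)=(0,0)$.

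Next I would take an arbitrary prime $P$ of $A\bowtie^fJ$ and split on whether $0\times J\subseteq P$. If $0\times J\subseteq P$, then $P$ corresponds, under the isomorphism $(A\bowtie^fJ)/(0\times J)\cong A$ induced by $p_A$, to a prime $\p$ of $A$; unwinding $p_A^{-1}(\p)=\{(a,f(a)+j)\mid a\in\p,\ j\in J\}$ shows that $P$ is exactly $\p'^f=\p\bowtie^fJ$, so $P$ has the first form.

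The substantive case is $0\times J\not\subseteq P$. Here the relation $(0\times J)\cdot\Ker(p_B)=0\subseteq P$ together with primeness of $P$ forces $\Ker(p_B)\subseteq P$. Consequently $P$ descends through the isomorphism $(A\bowtie^fJ)/\Ker(p_B)\cong C$ to a prime $\q'$ of the subring $C=f(A)+J$, that is, $P=p_B^{-1}(\q')$. Moreover $J\not\subseteq\q'$, for otherwise $p_B^{-1}(J)\subseteq P$, and since $p_B^{-1}(J)\supseteq 0\times J$ this would return us to the first case. It then remains to promote the prime $\q'$ of $C$ to a prime $\q$ of $B$ with $\q\cap C=\q'$; granting this, $P=p_B^{-1}(\q')=p_B^{-1}(\q)=\overline{\q}^f$ and the proof is complete.

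This promotion step is where I expect the only real difficulty, and I would resolve it by showing that $B$ and $C$ agree after inverting any element of $J\setminus\q'$. Fixing $j_0\in J\setminus\q'$, the localization map $C[1/j_0]\to B[1/j_0]$ is an isomorphism: surjectivity holds because for every $b\in B$ one has $bj_0\in J\subseteq C$, so $b/1=(bj_0)/j_0$ lies in the image, and injectivity is immediate from $C\subseteq B$. Since $j_0\notin\q'$, the prime $\q'$ survives in $C[1/j_0]$; transporting it across the isomorphism and contracting along $B\to B[1/j_0]$ yields a prime $\q$ of $B$ with $\q\cap C=\q'$, as required. The crucial input throughout is that $J$ is an ideal of $B$ and not merely of $C$: this is what makes $0\times J$ an ideal annihilating $\Ker(p_B)$, and what forces $bj_0\in C$ in the localization argument.
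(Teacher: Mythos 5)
Your proof is correct, but there is nothing in the paper to compare it against: the paper states this lemma as a quotation of \cite[Proposition 2.6]{df10} and gives no proof of its own, so any comparison must be with the cited source rather than with the text above. Your argument is a sound, self-contained reconstruction of the standard one. The key computations all check out: $\Ker(p_A)=\{0\}\times J$, $\Ker(p_B)=f^{-1}(J)\times\{0\}$, $\Im(p_B)=C:=f(A)+J$, and the annihilation relation $(\{0\}\times J)\cdot\Ker(p_B)=0$, which is exactly what makes the dichotomy on whether a prime $P$ contains $\{0\}\times J$ effective. In the first case the correspondence under $(A\bowtie^f J)/(\{0\}\times J)\cong A$ gives $P=\p'^f$; in the second case primeness forces $\Ker(p_B)\subseteq P$, so $P$ descends to a prime $\q'$ of $C$ with $J\not\subseteq\q'$, and your localization step $C[1/j_0]\cong B[1/j_0]$ for $j_0\in J\setminus\q'$ (surjectivity coming precisely from $J$ being an ideal of $B$, so $bj_0\in J\subseteq C$) correctly promotes $\q'$ to a prime $\q$ of $B$ with $\q\cap C=\q'$, whence $P=p_B^{-1}(\q)=\overline{\q}^f$. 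One small bonus worth noting: since $j_0\in J\cap C$ and $j_0\notin\q'$, you get $j_0\notin\q$, so your construction actually produces $\q$ with $J\not\subseteq\q$; this recovers the sharper form of the classification in \cite{df10} (primes of the second type are indexed by $\Spec(B)\setminus V(J)$), which is also the form used later in Lemma \ref{am-3}.
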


\begin{lemma}\cite[Corollary 2.8]{df16}\label{am-3}
Let $f:A\rightarrow B$ be a ring homomorphism and $J$ an ideal of $B$.  Set
$$\mathcal{X}=\bigcup\limits_{\q\in\Spec(B)-V(J)}V(f^{-1}(\q+J)).$$
Then the following properties hold.
 \begin{enumerate}
    \item The map defined by $\q\mapsto \overline{\q}^f$ establishes a homeomorphism of $\Min(B)-V(J)$ with $\Min(A\bowtie^fJ)-V(\{0\}\times J)$.
   \item  The map defined by $\p\mapsto \p'^f$ establishes a homeomorphism of $\Min(A)-\mathcal{X}$ with $\Min(A\bowtie^fJ)-V(\{0\}\times J)$.
  \end{enumerate}
 Therefore, we have
$$\Min(A\bowtie^fJ)=\{\p'^f\mid\p\in \Min(A)-\mathcal{X}\}\cup \{\overline{\q}^f\mid\q\in \Min(B)-V(J)\}.$$
\end{lemma}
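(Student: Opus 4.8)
The plan is to analyze the full spectrum of $A\bowtie^fJ$ through the ideal $\{0\}\times J$ and then restrict to minimal primes. First I would record two structural facts. Since $(a,f(a)+j)\mapsto a$ identifies $(A\bowtie^fJ)/(\{0\}\times J)$ with $A$, the primes of $A\bowtie^fJ$ that contain $\{0\}\times J$ are exactly the $\p'^f$ with $\p\in\Spec(A)$, and $\p\mapsto\p'^f$ is a homeomorphism of $\Spec(A)$ onto $V(\{0\}\times J)$ (it is the map induced by the surjection $p_A$ of the pullback). Dually, a prime $\overline{\q}^f$ contains $\{0\}\times J$ if and only if $J\subseteq\q$; combining this with Lemma \ref{am-2}, the primes \emph{not} containing $\{0\}\times J$ are precisely the $\overline{\q}^f$ with $\q\in\Spec(B)-V(J)$ (for $\q\in V(J)$ one checks $\overline{\q}^f=(f^{-1}(\q))'^f$, so these are already of the first type and nothing new appears).

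The crux is the comparison between the two families: I would prove that $\overline{\q}^f\subseteq\p'^f$ if and only if $f^{-1}(\q+J)\subseteq\p$. The point is that the set of first coordinates occurring in $\overline{\q}^f$ equals $f^{-1}(\q+J)$, since $f(a)+j\in\q$ forces $f(a)\in\q+J$, while conversely any $a$ with $f(a)\in\q+J$ gives $(a,f(a)-j')\in\overline{\q}^f$ for a suitable $j'\in J$. As $\p'^f=p_A^{-1}(\p)$, the containment $\overline{\q}^f\subseteq\p'^f$ is then equivalent to $f^{-1}(\q+J)\subseteq\p$, i.e.\ to $\p\in V(f^{-1}(\q+J))$. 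Hence $\p\in\mathcal{X}$ holds exactly when some $\overline{\q}^f$ with $\q\in\Spec(B)-V(J)$ sits below $\p'^f$.

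With these in hand the minimal primes separate cleanly. No prime of $V(\{0\}\times J)$ can lie below a prime that omits $\{0\}\times J$, so $\overline{\q}^f$ is minimal in $A\bowtie^fJ$ if and only if $\q$ is minimal in the open set $\Spec(B)-V(J)$; since $V(J)$ is closed upward, this is the same as $\q\in\Min(B)-V(J)$, which gives (1). For a prime $\p'^f$ containing $\{0\}\times J$, minimality requires both that $\p$ be minimal in $A$ (no smaller $\p_0'^f$ below it) and that no $\overline{\q}^f$ lie strictly below it; by the comparison the latter fails precisely when $\p\in\mathcal{X}$, so $\p'^f$ is minimal if and only if $\p\in\Min(A)-\mathcal{X}$, which gives (2). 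Taking the union of the two families yields the displayed description of $\Min(A\bowtie^fJ)$, and the homeomorphism assertions follow because $\q\mapsto\overline{\q}^f$ and $\p\mapsto\p'^f$ are restrictions of the continuous spectral maps induced by the projections $p_B$ and $p_A$ of the pullback recalled before Lemma \ref{am-1}. I expect the genuinely delicate point to be the comparison identity $\overline{\q}^f\subseteq\p'^f\Leftrightarrow f^{-1}(\q+J)\subseteq\p$, as it is exactly what forces the exceptional set $\mathcal{X}$ to intervene; the remaining arguments are bookkeeping with the two order correspondences.
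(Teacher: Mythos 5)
The paper never proves this lemma: it is quoted, as Lemma \ref{am-3}, directly from \cite[Corollary 2.8]{df16}, so there is no internal argument to compare yours against and your proof has to stand on its own. Its core does stand. The identification of $V(\{0\}\times J)$ with $\Spec(A)$ via $p_A$, the identification of the complementary primes with $\{\overline{\q}^f\mid \q\in\Spec(B)-V(J)\}$ using Lemma \ref{am-2} (including the observation that $\overline{\q}^f=(f^{-1}(\q))'^f$ when $J\subseteq\q$), and above all the comparison identity $\overline{\q}^f\subseteq\p'^f\Leftrightarrow f^{-1}(\q+J)\subseteq\p$ --- proved correctly via ``first coordinates of $\overline{\q}^f$ equal $f^{-1}(\q+J)$'' and $\p'^f=p_A^{-1}(\p)$ --- are all right, and they assemble exactly as you say into the displayed description of $\Min(A\bowtie^fJ)$. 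Note also that your argument silently corrects a typo in the statement as printed: since $\p'^f$ always contains $\{0\}\times J$, the target in item (2) must be $\Min(A\bowtie^fJ)\cap V(\{0\}\times J)$ (as it is in \cite[Corollary 2.8]{df16}), not $\Min(A\bowtie^fJ)-V(\{0\}\times J)$; what you prove is this corrected version, which is the one needed for the final display and for Proposition \ref{am}.

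The one genuine gap is in the homeomorphism claims. Being the restriction of a continuous spectral map gives continuity, nothing more. For $\p\mapsto\p'^f=p_A^{-1}(\p)$ this is harmless: $p_A$ is surjective with kernel $\{0\}\times J$, so its spectral map is a closed embedding onto $V(\{0\}\times J)$, and restrictions of embeddings are embeddings. But $p_B$ is \emph{not} surjective in general (its image is $f(A)+J$), so for $\q\mapsto\overline{\q}^f=p_B^{-1}(\q)$ you have established neither injectivity nor continuity of the inverse; moreover, your transfer of minimality from $\Spec(B)-V(J)$ to $\Spec(A\bowtie^fJ)-V(\{0\}\times J)$ implicitly uses that this map \emph{reflects} inclusions, which also needs proof. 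The patch is short: for a prime $\q$ with $J\not\subseteq\q$, primeness gives $\q=\{b\in B\mid bJ\subseteq \q\cap J\}$, and $\q\cap J=\{j\in J\mid (0,j)\in\overline{\q}^f\}$, so $\q$ is recovered from $\overline{\q}^f$; this yields injectivity and order-reflection at once, and bi-continuity then follows either from the conductor-square viewpoint ($\{0\}\times J$ is a common ideal of $A\bowtie^fJ$ and $A\times B$, and contraction is a homeomorphism on primes avoiding a common ideal) or simply by invoking the full statement of \cite[Proposition 2.6]{df10}, of which the paper's Lemma \ref{am-2} records only the classification of primes. With that supplement your proof is complete; the order-theoretic core, which is all that Proposition \ref{am} actually uses, is already correct as written.
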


\begin{proposition}\label{am}
Let $f:A\rightarrow B$ be a ring homomorphism and $J$ an ideal of $B$. Then $A\bowtie^fJ$ is a $\tau_q$-semisimple ring if and only if $A$ is reduced, $\Nil(B)\cap J=0$, and $\{\p'^f\mid\p\in \Min(A)-\mathcal{X}\}$ and $ \{\overline{\q}^f\mid\q\in \Min(B)-V(J)\}$ are finite sets.
\end{proposition}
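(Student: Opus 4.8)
The plan is to combine the characterization of $\tau_q$-semisimple rings in Theorem \ref{0-d} with the two structural lemmas on amalgamated rings. By the equivalence $(1)\Leftrightarrow(6)$ of Theorem \ref{0-d}, a ring is $\tau_q$-semisimple precisely when it is reduced and has only finitely many minimal primes. I would therefore apply this criterion to $R=A\bowtie^fJ$ and split the argument into two independent verifications: first, that $A\bowtie^fJ$ is reduced if and only if $A$ is reduced and $\Nil(B)\cap J=0$; and second, that $\Min(A\bowtie^fJ)$ is finite if and only if the two displayed families of primes are both finite.

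The first verification requires no new work: it is exactly the statement of Lemma \ref{am-1}. For the second, I would invoke Lemma \ref{am-3}, which supplies the explicit decomposition
$$\Min(A\bowtie^fJ)=\{\p'^f\mid\p\in \Min(A)-\mathcal{X}\}\cup \{\overline{\q}^f\mid\q\in \Min(B)-V(J)\}.$$
Since a union of two sets is finite if and only if each of the two sets is finite, $\Min(A\bowtie^fJ)$ is finite exactly when both $\{\p'^f\mid\p\in \Min(A)-\mathcal{X}\}$ and $\{\overline{\q}^f\mid\q\in \Min(B)-V(J)\}$ are finite. Combining these two verifications with Theorem \ref{0-d} then yields both directions of the claimed equivalence at once.

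I do not anticipate any serious obstacle, as the heavy lifting is carried entirely by the cited results; the proof is essentially a bookkeeping assembly of Theorem \ref{0-d}, Lemma \ref{am-1} and Lemma \ref{am-3}. The only point deserving a word of care is that finiteness of the entire minimal-prime spectrum is being recast as the simultaneous finiteness of the two pieces in the union, which is immediate from elementary set theory (and holds regardless of any overlap between the two families). One could additionally remark, using the homeomorphisms of Lemma \ref{am-3}(1)--(2), that these two finiteness conditions may equivalently be phrased directly in terms of $\Min(A)-\mathcal{X}$ and $\Min(B)-V(J)$, which gives a slightly cleaner restatement of the criterion.
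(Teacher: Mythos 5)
Your proposal is correct and follows exactly the paper's own proof, which likewise reduces the claim to Theorem \ref{0-d} (a ring is $\tau_q$-semisimple if and only if it is reduced with finitely many minimal primes), Lemma \ref{am-1} for the reducedness criterion, and Lemma \ref{am-3} for the decomposition of $\Min(A\bowtie^fJ)$. The only difference is that you spell out the elementary finiteness bookkeeping that the paper leaves implicit.
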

\begin{proof}
 It follows by Lemma \ref{am-1}, Lemma \ref{am-3} and Theorem \ref{0-d}.
\end{proof}

Recall from \cite{DF07} that, by setting  $f=\Id_A: A\rightarrow A$ to be the identity  homomorphism of $A$, we denote by $A \bowtie J:=A \bowtie^{\Id_A}J$ and call it the amalgamated algebra of $A$ along $J$. From Proposition \ref{am}, one can easily deduce the following result.

\begin{corollary}
Let $J$ be an ideal of $A$. Then $A\bowtie J$ is a $\tau_q$-semisimple ring if and only if $A$ is  a $\tau_q$-semisimple ring.
\end{corollary}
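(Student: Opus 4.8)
The plan is to deduce the Corollary directly from Proposition \ref{am} by specializing to the case $f=\Id_A$ and $J$ an ideal of $A=B$. I would begin by writing out what each of the three conditions in Proposition \ref{am} becomes in this setting. The condition ``$A$ is reduced and $\Nil(B)\cap J=0$'' collapses: since $B=A$, we have $\Nil(B)\cap J=\Nil(A)\cap J$, and if $A$ is reduced then $\Nil(A)=0$ so this intersection is automatically zero; conversely reducedness of $A$ is exactly the first clause. Thus the reducedness requirement on the amalgamation is equivalent to reducedness of $A$ alone, which matches the ``reduced'' half of what it means (via Theorem \ref{0-d}) for $A$ to be $\tau_q$-semisimple.

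The main work is to check that the two finiteness conditions together are equivalent to $\Min(A)$ being finite. Here I would invoke Lemma \ref{am-3}, which gives
$$\Min(A\bowtie^fJ)=\{\p'^f\mid\p\in \Min(A)-\mathcal{X}\}\cup \{\overline{\q}^f\mid\q\in \Min(B)-V(J)\}.$$
With $f=\Id_A$ and $B=A$, both pieces are governed by $\Min(A)$. The point I expect to require the most care is showing that the set $\mathcal{X}$ and the set $V(J)$ partition or cover $\Min(A)$ appropriately, so that the union above is in natural bijection with $\Min(A)$ itself: a minimal prime of $A$ either contains $J$ (contributing via the $\overline{\q}^f$ piece) or does not (contributing via the $\p'^f$ piece). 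Concretely, when $f=\Id_A$ the set $\mathcal{X}=\bigcup_{\q\in\Spec(A)-V(J)}V(\q+J)$, and for a minimal prime $\p$ one has $\p\notin\mathcal{X}$ precisely when $\p\notin V(J)$, i.e. $J\not\subseteq\p$. Since every minimal prime falls into exactly one of the two classes, the decomposition shows $\Min(A\bowtie J)$ is finite if and only if both indexing sets are finite, if and only if $\Min(A)$ is finite.

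Having established that $A\bowtie J$ is reduced with $\Min(A\bowtie J)$ finite exactly when $A$ is reduced with $\Min(A)$ finite, I would close the argument by applying the equivalence $(1)\Leftrightarrow(6)$ of Theorem \ref{0-d} twice: $A\bowtie J$ is $\tau_q$-semisimple iff it is reduced with finite minimal primes, and likewise for $A$. The hardest step is the bijection analysis in the previous paragraph, since one must verify that the two families arising in Lemma \ref{am-3} do not overlap and together exhaust the minimal primes of $A$; the reducedness equivalence and the final appeal to Theorem \ref{0-d} are routine. I would keep the exposition short by simply citing Proposition \ref{am} and observing that, for $f=\Id_A$, its three conditions simplify exactly to the single condition that $A$ be $\tau_q$-semisimple.
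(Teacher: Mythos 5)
Your overall strategy---specializing Proposition \ref{am} with $f=\Id_A$, $B=A$ and then applying Theorem \ref{0-d}(1)$\Leftrightarrow$(6)---is exactly the deduction the paper intends, and your treatment of the reducedness condition is correct. But the step you yourself single out as the hardest one contains a genuine error. You claim that for a minimal prime $\p$ of $A$ one has $\p\notin\mathcal{X}$ precisely when $J\not\subseteq\p$. This is false: in fact \emph{no} minimal prime of $A$ lies in $\mathcal{X}$. Indeed, $\p\in\mathcal{X}=\bigcup_{\q\in\Spec(A)-V(J)}V(\q+J)$ means there exists a prime $\q$ with $J\not\subseteq\q$ and $\q+J\subseteq\p$; for $\p$ minimal, the containment $\q\subseteq\p$ forces $\q=\p$, so one would need both $J\subseteq\p$ and $J\not\subseteq\p$, a contradiction. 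Hence $\Min(A)-\mathcal{X}=\Min(A)$, not $\Min(A)-V(J)$. Your ``partition'' picture is also backwards: the $\overline{\q}^f$-piece of Lemma \ref{am-3} is indexed by $\Min(B)-V(J)=\Min(A)-V(J)$, i.e.\ by the minimal primes \emph{not} containing $J$, so minimal primes containing $J$ do not ``contribute via the $\overline{\q}^f$ piece'' as you assert.

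This is not a cosmetic slip, because under your identification the two indexing sets both become $\Min(A)-V(J)$, and then the equivalence ``both indexing sets finite $\Leftrightarrow$ $\Min(A)$ finite'' fails. Take $J=0$: then $\Min(A)-V(J)=\emptyset$, so both of your indexing sets are trivially finite regardless of $\Min(A)$, and your reading of the decomposition would even give $\Min(A\bowtie 0)=\emptyset$, which is absurd since $A\bowtie 0\cong A$. The repair is short: since $\Min(A)\cap\mathcal{X}=\emptyset$, the $\p'^f$-piece is indexed by all of $\Min(A)$ and the $\overline{\q}^f$-piece by the subset $\Min(A)-V(J)$; as both assignments $\p\mapsto\p'^f$ and $\q\mapsto\overline{\q}^f$ are injective (they are homeomorphisms onto their images by Lemma \ref{am-3}), the two sets in Proposition \ref{am} are finite if and only if $\Min(A)$ is finite. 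With that corrected, the corollary follows exactly as you outline from Proposition \ref{am} and Theorem \ref{0-d}.
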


\end{document}